\documentclass[10pt]{amsart}

\usepackage{amssymb}
\usepackage{amsfonts,latexsym}

\usepackage[colorlinks=true]{hyperref}
\hypersetup{linkcolor=black,citecolor=black,filecolor=black,urlcolor=black} 

\newtheorem{theorem}{Theorem}[section]
\newtheorem{proposition}{Proposition}[section]

\newtheorem{lemma}{Lemma}[section]
\newtheorem{remark}{Remark}[section]
\newtheorem{definition}{Definition}[section]

\begin{document}

\title[$4$-ended solutions]{The space of $4$-ended solutions to the Allen-Cahn equation in the plane}

\author{Micha{\l } Kowalczyk}
\address{Micha{\l } Kowalczyk, Departamento de Ingenier\'{\i}a Matem\'atica and Centro
de Modelamiento Matem\'atico (UMI 2807 CNRS), Universidad de Chile, Casilla
170 Correo 3, Santiago, Chile.}
\email {kowalczy@dim.uchile.cl}

\author{Yong Liu}
\address{\noindent Y. Liu - Departamento de Ingenier\'{\i}a Matem\'atica and Centro de
Modelamiento Matem\'atico (UMI 2807 CNRS), Universidad de Chile, Casilla 170
Correo 3, Santiago, Chile.}
\email{yliu@dim.uchile.cl}

\author{Frank Pacard}
\address{Frank Pacard, Centre de Math\'ematiques Laurent Schwartz, \'Ecole
Polytechnique, 91128 Palaiseau, France et Institut Universitaire de France}
\email{frank.pacard@math.polytechnique.fr}

\maketitle

\begin{abstract}
We are interested in entire solutions of the Allen-Cahn equation $\Delta u- F^{\prime}(u) = 0$ which have some special structure at infinity. In this equation, the function $F$ is an even, bistable function. The solutions we are interested in have their zero set asymptotic to $4$ half oriented affine lines  at infinity and, along each of these half affine lines, the solutions are asymptotic to the one dimensional heteroclinic solution~: such solutions are called {\em $4$-ended solutions}. The main result of our paper states that, for any $\theta \in (0, \pi/2)$, there exists a $4$-ended solution of the Allen-Cahn equation whose zero set is at infinity asymptotic to the half oriented affine lines making the angles $\theta , \pi - \theta, \pi+\theta$ and $2\pi - \theta$ with the $x$-axis. This paper is part of a program whose aim is to classify all $2k$-ended solutions of the Allen-Cahn equation in dimension $2$, for $k \geq 2$. 
\end{abstract}

\section{Introduction}

In this paper, we are interested in entire solutions of the Allen-Cahn equation
\begin{equation}
\Delta u - F^{\prime}\left(  u\right)  = 0 ,
\label{AC}
\end{equation}
in $\mathbb R^2$, where the function $F$ is a smooth, double well potential. This means that $F$ is even, nonnegative and has only two zeros which will be chosen to be at $\pm 1$. Moreover, we assume that 
\[
F'' ( \pm 1 )  \neq 0 ,
\]
and also  that 
\[
F'(t)  \neq 0,\qquad \mbox{for all} \qquad t \in (0,1).
\]

\medskip

It is known that (\ref{AC}) has a solution whose nodal set is {\em any} given straight line. These special solutions, which will be referred to as the {\em heteroclinic solutions}, are constructed using the heteroclinic, one dimensional solution of (\ref{AC}), namely the function $H$ defined on $\mathbb R$, solution of 
\begin{align}
\label{heteroclinic}
H^{\prime\prime} - F^{\prime}(H) =0 ,
\end{align}
which is odd and tends to $-1$ (respectively to $+1$) at $-\infty$ (respectively at $+\infty$). 

\medskip

More precisely, we have the~:
\begin{definition}
Given $r\in \mathbb R$ and ${\tt e} \in \mathbb R^2$ such that $|{\tt e}|=1$, the {\em heteroclinic solutions with end} 
$\lambda : =  r \, {\tt e}^\perp + \mathbb R \, {\tt e}$ is defined by
\[
u({\tt x}) : = H({\tt x} \cdot {\tt e}^\perp - r),
\]
where $\perp$ denotes the rotation of angle $\pi/2$ in the plane. 
\end{definition}
Observe that this construction extends in any dimension to produce solutions whose level sets are hyperplanes. 

\medskip

The famous {\em de Giorgi conjecture} asserts that (in space dimension less than or equal to $8$), if $u$ is a bounded  solution of (\ref{AC}) which is monotone in one direction, then $u$ has to be one of the above defined heteroclinic solutions. This conjecture is known to hold when the space dimension is equal to $2$ \cite{MR1637919}, in dimension $3$ \cite{MR1775735}  and in dimension $4$ to $8$ \cite{MR2480601} under some mild additional assumption. Counterexamples have been constructed in all dimensions  $N\geq 9$ in \cite{dkp_dg}, showing that the conjecture is indeed sharp. 

\medskip

In this paper, we are interested in entire solutions of (\ref{AC}) which are defined in $\mathbb R^2$ and which have some special structure at infinity, namely their zero set is, at infinity, asymptotic to $4$ oriented affine lines~: such solutions are called {\em $4$-ended solutions} and will be precisely defined in the next section. The main result of our paper states that, for any $\theta \in (0, \pi/2)$, there exists a $4$-ended solution of (\ref{AC}) whose zero set is asymptotic  at infinity to the half oriented affine lines making an angle $\theta, \pi-\theta, \pi+\theta$ and $2\pi -\theta$ with the $x$-axis. 

\section{The space of $4$-ended solutions}

In order to proceed, we need to define precisely the class of entire solutions of (\ref{AC}) we are interested in. As already mentioned, these solutions have the property that their nodal sets are, away from a compact, asymptotic to a finite (even) number of half  oriented affine lines, which are called the {\em ends} of the solutions. The concept of solutions with a finite number of {\em ends} was first introduced in \cite{dkp-2009} and, for the sake of completeness, we recall the precise definitions in the case of $4$-ended solutions.

\medskip

An \emph{oriented affine line} $\lambda \subset \mathbb{R}^{2}$ can be uniquely written as
\[
\lambda:= r \, \mathtt{e}^{\perp}+ \mathbb{R }\, \mathtt{e} ,
\]
for some $r \in\mathbb{R}$ and some unit vector $\mathtt{e} \in S^{1}$, which defines the orientation of $\lambda$. We recall that $\perp$ denotes the rotation by $\pi/2$ in $\mathbb{R}^{2}$. Writing $\mathtt{e} = (\cos\theta, \sin\theta)$, we get the usual coordinates $(r, \theta)$ which allow to identify the set of oriented affine lines with $\mathbb R \times S^1$. 

\medskip

Assume that we are given $4$ oriented affine lines $\lambda_{1}, \ldots, \lambda_4 \subset \mathbb R^2$  which are defined by
\[
\lambda_j := r_j \, \mathtt{e}_j^{\perp}+ \mathbb{R }\, \mathtt{e}_j ,
\]
and assume that these oriented affine lines have corresponding angles $\theta_1, \ldots, \theta_4$ satisfying
\[
\theta_1 < \theta_2 < \theta_3 < \theta_4 < 2\pi + \theta_1 .
\]
In this case, we will say that the $4$ oriented affine lines are {\em ordered} and we will denote by $\Lambda^4_{ord}$ the set of  $4$ oriented affine lines. It is easy to check that for all $R > 0 $ large enough and for all $j=1, \ldots, 4$, there exists $s_{j} \in\mathbb{R}$ such that~:

\begin{itemize}
\item[(i)] The point $ r_{j} \, \mathtt{e}^{\perp}_{j} + s_{j} \, \mathtt{e}_{j}$ belongs to the circle $\partial B_{R}$.\newline

\item[(ii)] The half affine lines
\begin{equation}
\lambda_{j}^{+} := r_{j} \, \mathtt{e}^{\perp}_{j} + s_{j} \, \mathtt{e}_{j} + \mathbb{R}^{+} \, \mathtt{e}_{j} ,
\label{eq:halfline}
\end{equation}
are disjoint and included in $\mathbb{R}^{2} - B_{R}$.\newline

\item[(iii)] The minimum of the distance between two distinct half affine lines $\lambda^{+}_{i}$ and $\lambda_{j}^{+}$ is larger than $4$. 
\end{itemize}

The set of half affine lines $\lambda_{1}^{+}, \ldots, \lambda_{4}^{+}$ together with the circle $\partial B_{R}$ induce a decomposition of $\mathbb{R}^{2}$ into $5$ slightly overlapping connected
components
\[
\mathbb{R}^{2} = \Omega_{0} \cup\Omega_{1} \cup \ldots \cup\Omega_{4} ,
\]
where $\Omega_{0} : = B_{R+1}$ and 
\begin{align}
\label{decomp 1}
\Omega_{j} : = \left( \mathbb R^2 - B_{R-1}\right) Ê\cap \left\{  \mathtt{x} \in\mathbb{R}^{2} \, : \,  \mathrm{dist} (\mathtt{x}, \lambda _{j}^{+}) < \mathrm{dist} (\mathtt{x}, \lambda_{i}^{+})+2, \, \forall i \neq j \,\right\}  ,
\end{align}
for $j=1, \ldots, 4$. Here, $\mathrm{dist} (\cdot , \lambda_{j}^{+})$ denotes the distance to $\lambda_{j}^{+}$. Observe that, for all $j=1, \ldots, 4$, the set $\Omega_{j}$ contains the half affine line $\lambda_{j}^{+}$.

\medskip

Let  ${\mathbb{I}}_{0}, {\mathbb{I}}_{1}, \ldots, {\mathbb{I}}_{4} $ be a smooth partition of unity of $\mathbb{R}^{2}$ which is subordinate to the above decomposition. Hence
\[
\sum_{j=0}^4 \mathbb{I}_{j} \equiv1,
\]
and the support of $\mathbb{I}_{j}$ is included in $\Omega_{j}$. Without loss of generality, we can also assume that ${\mathbb{I}}_{0}\equiv1$ in
\[
\Omega^{\prime}_{0} := B_{R -1},
\]
and ${\mathbb{I}}_{j}\equiv1$ in
\[
\Omega^{\prime}_{j} :=  \left( \mathbb R^2 - B_{R-1}\right) Ê\cap \left\{  \mathtt{x} \in\mathbb{R}^{2} \, : \,  \mathrm{dist} (\mathtt{x}, \lambda_{j}^{+}) < \mathrm{dist} (\mathtt{x}, \lambda_{i}^{+})-2, \, \forall i \neq j \,\right\} ,
\]
for $j=1, \ldots, 4$. Finally, without loss of generality, we can assume that
\[
\|{\mathbb{I}}_{j}\|_{\mathcal{C}^{2}(\mathbb{R}^{2})} \leq C .
\]

With these notations at hand, we define
\begin{align}
u_{\lambda}: = \sum_{j=1}^{4} (-1)^{j} \, {\mathbb{I}}_{j} \, H ( \mathrm{dist}^{s} ( \, \cdot \, , \lambda_{j} )) , \label{def w}
\end{align}
where $\lambda : =  (\lambda_1, \ldots, \lambda_4)$ and 
\begin{equation}
\mathrm{dist}^{s} ( \mathtt{x} , \lambda_{j} ) : = \mathtt{x} \cdot\mathtt{e}^{\perp}_{j} - r_j, \label{eq:signdist}
\end{equation}
denotes the \emph{signed distance} from a point $\mathtt{x} \in\mathbb{R}^{2}$ to $\lambda_{j}$.

\medskip

Observe that, by construction, the function $u_{\lambda}$ is, away from a compact and up to a sign, asymptotic to copies of the heteroclinic solution with ends  $\lambda_{1}, \ldots, \lambda_4$. 

\medskip

Let $\mathcal{S}_{4}$ denote the set of functions $u$ which are defined in $\mathbb{R}^{2}$ and which satisfy
\begin{equation}
u - u_{\lambda}\in W^{2,2} \, (\mathbb{R}^{2}) , 
\label{ass w}
\end{equation}
for some ordered set of oriented affine lines $\lambda_1, \ldots, \lambda_4 \subset \mathbb R^2$. We also define the
\textrm{decomposition operator} $\mathcal{J}$ by
\[
\begin{array}
[c]{rcccllll}
\mathcal{J }: & \mathcal{S}_{4} & \longrightarrow & W^{2,2} (\mathbb{R}^{2})
\times\Lambda^4_{ord} &  &  &  & \\[3mm]
& u & \longmapsto & \left(  u- u_{\lambda}, \lambda\right)  . &  &  &  &
\end{array}
\]
The topology on $\mathcal{S}_{4}$ is the one for which the operator $\mathcal{J}$ is continuous (the target space being endowed with the product topology). 

\medskip
We now have the~:
\begin{definition}
The set  $\mathcal{M}_{4}$ is defined to be the set of solutions $u$ of (\ref{AC}) which belong to $\mathcal{S}_{4}$. 
\label{de:001}
\end{definition}

It is known that $\mathcal{M}_{4}$ is not empty. For example, the {\em saddle solution} constructed in \cite{MR1198672} belongs to $\mathcal M_4$, the nodal set of this solution is the union of the two lines $y= \pm x$. Another important fact, also proven in \cite{MR1198672} or in  \cite{MR2381198}, is that up to a sign and a rigid motion, the {\em saddle solution} is the unique solution whose nodal set coincides with the union of the two lines $y=\pm x$. The solutions constructed in \cite{MR2557944} are also elements of $\mathcal M_4$ and we shall return to this point later on.  

\medskip

Recall from \cite{dkp-2009}, that a solution $u$ of (\ref{AC}) is said to be {\em nondegenerate} if there is no $w \in W^{2,2}(\mathbb R^2)- \{0\}$ which is in the kernel of
\[
L : =  - \Delta + F''(u) ,
\]
and which decays exponentially at infinity.

\medskip

As far as the structure of the set of $4$-ended solutions is concerned, the main result of \cite{dkp-2009} asserts that~:
\begin{theorem} \cite{dkp-2009}
Assume that $u \in \mathcal M_4$ is {\em nondegenerate}, then, close to $u$, $\mathcal M_4$ is a $4$-dimensional smooth manifold.
\label{th:ms}
\end{theorem}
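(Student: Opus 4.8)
The plan is to prove that near a nondegenerate $u \in \mathcal{M}_4$, the set $\mathcal{M}_4$ is a smooth $4$-dimensional manifold by setting up a functional-analytic framework and invoking the implicit function theorem, the $4$ dimensions coming from the rigid motions of $\mathbb{R}^2$ (two translations, one rotation) together with a dilation-type parameter --- or more precisely, from the fact that the $4$-ended configuration space $\Lambda^4_{ord}$ is $8$-dimensional but the Jacobi-field/balancing constraints cut this down, leaving a $4$-dimensional family. Concretely, I would begin by fixing $u \in \mathcal{M}_4$ nondegenerate and, using the decomposition operator $\mathcal{J}$, write any nearby candidate solution as $u_\lambda + v$ where $\lambda$ ranges over a neighborhood in $\Lambda^4_{ord}$ and $v \in W^{2,2}(\mathbb{R}^2)$ is small.

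\medskip

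Next I would define the nonlinear map
\[
\mathcal{N}(\lambda, v) := \Delta (u_\lambda + v) - F'(u_\lambda + v),
\]
and observe that $\mathcal{N}(\lambda_0, v_0) = 0$ where $(v_0,\lambda_0) = \mathcal{J}(u)$. The key point is that $u_\lambda$ is an approximate solution: one computes that $\Delta u_\lambda - F'(u_\lambda)$ is supported in the overlapping regions $\Omega_i \cap \Omega_j$ and the transition annulus, and decays exponentially there because of the exponential convergence of $H$ to $\pm 1$. Hence $\mathcal{N}$ maps a neighborhood of $(\lambda_0, v_0)$ in $\Lambda^4_{ord} \times W^{2,2}(\mathbb{R}^2)$ smoothly into $L^2(\mathbb{R}^2)$. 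The linearization of $\mathcal{N}$ with respect to $v$ at $(\lambda_0, v_0)$ is exactly $-L = \Delta - F''(u)$ (up to sign), which by the nondegeneracy hypothesis has trivial kernel on $W^{2,2}(\mathbb{R}^2)$; combined with the fact that the essential spectrum of $-\Delta + F''(u)$ is bounded away from $0$ (since $F''(\pm 1) > 0$ and $u \to \pm 1$ at infinity along the ends), $L$ is an isomorphism from $W^{2,2}(\mathbb{R}^2)$ onto $L^2(\mathbb{R}^2)$.

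\medskip

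With this isomorphism in hand, the implicit function theorem applies: there is a neighborhood $U$ of $\lambda_0$ in $\Lambda^4_{ord}$ and a smooth map $\lambda \mapsto v(\lambda) \in W^{2,2}(\mathbb{R}^2)$ with $v(\lambda_0) = v_0$ such that $\mathcal{N}(\lambda, v) = 0$ for $(\lambda, v)$ near $(\lambda_0, v_0)$ if and only if $v = v(\lambda)$. This exhibits a parametrization $\lambda \mapsto u_\lambda + v(\lambda)$ of all solutions in $\mathcal{M}_4$ near $u$ by the $8$-dimensional manifold $\Lambda^4_{ord}$, which is too big; the final step is to cut this down to dimension $4$. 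For this I would use the fact that a genuine solution must satisfy the Toda-type balancing conditions on the ends (derived by testing the equation against the approximate Jacobi fields coming from translating/rotating each individual end and using a Pohozaev-type identity, or equivalently by examining the projection of $\mathcal{N}(\lambda, v(\lambda))$ onto a finite-dimensional cokernel built from the asymptotic geometry): these conditions restrict $\lambda$ to a $4$-dimensional submanifold, and nondegeneracy guarantees that this submanifold is smooth near $\lambda_0$.

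\medskip

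I expect the main obstacle to be the last step, namely the careful bookkeeping of \emph{which} $4$ of the $8$ parameters in $\Lambda^4_{ord}$ survive and proving the constraint variety is smooth of the right dimension. This requires understanding the mapping properties of $L$ not just on $W^{2,2}$ but on weighted spaces that allow polynomial or bounded growth (to capture the deformations of the ends, which are \emph{not} in $W^{2,2}$), computing the dimension of the corresponding cokernel, and showing the ``balancing map'' has surjective differential --- this is precisely where the nondegeneracy hypothesis does its real work, and it is the technical heart that distinguishes this from a routine implicit-function-theorem argument. The exponential-decay estimates on $\Delta u_\lambda - F'(u_\lambda)$ and the Fredholm theory for $L$ on the relevant weighted Sobolev spaces are the supporting lemmas that must be established first.
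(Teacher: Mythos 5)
This theorem is quoted from \cite{dkp-2009}; the present paper does not reprove it, so I am checking your argument against the known one. The central claim in your proposal is false: $L=-\Delta+F''(u)$ is \emph{not} an isomorphism from $W^{2,2}(\mathbb R^2)$ onto $L^2(\mathbb R^2)$, nor even Fredholm between these spaces. You justify the isomorphism by saying the essential spectrum is bounded away from $0$ because $u\to\pm1$ at infinity, but this fails precisely along the ends: in a tubular neighborhood of each half-line $\lambda_j^+$, the solution $u$ stays close to the heteroclinic profile $H$, which interpolates between $-1$ and $+1$ and crosses $0$. In such a tube $L$ is asymptotically $-\partial_s^2-\partial_t^2+F''(H(s))$; since the transverse operator $-\partial_s^2+F''(H(s))$ has eigenvalue $0$ (eigenfunction $H'$) and the tangential $-\partial_t^2$ contributes continuous spectrum $[0,\infty)$, the bottom of the essential spectrum of $L$ on $L^2(\mathbb R^2)$ is $0$. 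Hence the implicit function theorem you invoke does not apply on these unweighted spaces. This failure is exactly what makes the result nontrivial: the proof in \cite{dkp-2009} works with exponentially weighted spaces adapted to the ends on which $L$ is Fredholm and, by the nondegeneracy hypothesis, injective, and supplements the domain with the finite-dimensional deficiency space of Jacobi fields of the type $\partial_r u_\lambda$, $\partial_\theta u_\lambda$; the index computation then gives a moduli-space dimension equal to the number of ends, i.e.\ $4$.

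Your last step is also internally inconsistent. If the IFT you invoke did apply, it would already produce an exact solution $u_\lambda+v(\lambda)$ of (\ref{AC}) for \emph{every} $\lambda$ in an $8$-parameter neighborhood, leaving no constraints to impose and yielding an $8$-dimensional family, contradicting the theorem. ``Toda-type balancing conditions'' arise as solvability conditions in gluing constructions (as in \cite{MR2557944}) where the ansatz is only approximate; they are not extra constraints on already-exact solutions. Also, there is no ``dilation-type parameter'' for (\ref{AC}): the double-well potential $F$ fixes a length scale and the equation is not scale invariant, so the $4$ dimensions are not $3$ rigid motions plus a dilation, but rather the number of ends. The weighted Fredholm analysis you mention only in passing at the end is not a supporting lemma one can defer --- it is where the theorem is actually proved, and without it the argument does not start.
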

Observe that, given $u \in \mathcal M_4$, translations and rotations of $u$ are also elements of $\mathcal M_4$ and this accounts for $3$ of the $4$ formal dimensions of $\mathcal M_4$, moreover, if $u \in \mathcal M_4$ then  $-u \in \mathcal M_4$.  

\medskip

All the $4$-ended solutions constructed so far have two axis of symmetry and in fact, it follows from a result of C. Gui \cite{2011arXiv1102.4022G} that~:
\begin{theorem} \cite{2011arXiv1102.4022G}
Assume that $u \in{\mathcal{M}}_{4}$. Then, there exists a rigid motion $g$ such that $\bar u : = u \circ g$ is even with respect to the $x$-axis and the $y$-axis, namely
\begin{align}
\label{even}
\bar u(x,y)= \bar u(-x,y)= \bar u(x,-y).
\end{align}
In addition, $\bar u$ is a monotone function of both the $x$ and $y$ variables in the upper right quadrant $Q^{\llcorner}$ defined by
\[
Q^{\llcorner} : = \{ (x,y) \in \mathbb R^2 \, : \,  x>0  \quad  y>0\}, 
\]
and, changing the sign of $\bar u$ if this is necessary, we can assume that
\[
\partial_x \bar u <0 \qquad \mbox{and} \qquad \partial_y \bar u >0 ,
\]
in $Q^{\llcorner}$. 
\end{theorem}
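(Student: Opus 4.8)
I would prove this in two stages: first, the conservation laws attached to (\ref{AC}) force the directions of the four ends into a symmetric configuration; then the method of moving planes, applied once horizontally and once vertically, upgrades this to an exact symmetry of the solution, the monotonicity dropping out of the moving plane analysis as a byproduct. \emph{Step 1: the ends are balanced.} Since $u\in\mathcal{M}_4\subset\mathcal{S}_4$, we have $u-u_\lambda\in W^{2,2}(\mathbb{R}^2)$ for an ordered $4$-tuple $\lambda=(\lambda_1,\dots,\lambda_4)$ with angles $\theta_1<\theta_2<\theta_3<\theta_4<\theta_1+2\pi$. Standard elliptic estimates for the equation satisfied by $u-u_\lambda$ (see \cite{dkp-2009}) show that $u$ is, outside a large ball, exponentially close to $u_\lambda$, so that far along each end $\lambda_j$ the solution is, up to the sign $(-1)^j$, a copy of $H$ transported transversally to $\lambda_j$; in particular the nodal set of $u$ is asymptotic to the four half-lines $\lambda_j^+$. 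Multiplying (\ref{AC}) by $\partial_x u$ and integrating over the vertical lines $\{x\}\times\mathbb{R}$ gives the Hamiltonian identity: the function
\[
x\longmapsto\int_{\mathbb{R}}\left(\frac{1}{2}(\partial_x u)^2-\frac{1}{2}(\partial_y u)^2-F(u)\right)dy
\]
is constant. Letting $x\to\pm\infty$ and using the asymptotics above --- an end at angle $\phi$ contributing $-\sigma\,|\cos\phi|$ to this integral on whichever side it escapes to, where $\sigma:=\int_{\mathbb{R}}(H')^2>0$ --- we obtain $\sum_{j=1}^4\cos\theta_j=0$, while the analogous identity in the $y$-variable gives $\sum_{j=1}^4\sin\theta_j=0$. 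Hence $\sum_{j=1}^4\mathtt{e}_j=0$, and an elementary computation using the strict ordering forces $\theta_3=\theta_1+\pi$ and $\theta_4=\theta_2+\pi$. After a rotation we may therefore assume $\theta_1=\theta$, $\theta_2=\pi-\theta$, $\theta_3=\pi+\theta$, $\theta_4=2\pi-\theta$ for some $\theta\in(0,\pi/2)$, so the four ends form two pairs of opposite half-lines, symmetric about both coordinate axes.

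\emph{Step 2: symmetry via moving planes.} For $\lambda\in\mathbb{R}$ set $u^\lambda(x,y):=u(2\lambda-x,y)$ and $w_\lambda:=u^\lambda-u$, so that $w_\lambda=0$ on $\{x=\lambda\}$ and $\Delta w_\lambda=c_\lambda\,w_\lambda$ in $\{x<\lambda\}$, with $c_\lambda:=\int_0^1 F''\bigl(tu^\lambda+(1-t)u\bigr)\,dt$. One first checks that $w_\lambda\geq0$ in $\{x<\lambda\}$ for all $\lambda$ large: by Step 1, near the two ends running off to the right, reflection across $\{x=\lambda\}$ decreases the relevant signed distances by $2(\lambda-x)\sin\theta>0$, which, with the factors $(-1)^j$ in (\ref{def w}), gives $w_\lambda>0$ near those ends; near the other two ends $u^\lambda$ is close to $+1\geq u$; and off a thin neighborhood of the nodal set of $u$ both $u$ and $u^\lambda$ are exponentially close to $\pm1$, where $c_\lambda>0$ since $F''(\pm1)>0$, so a maximum principle --- together with a narrow-domain estimate inside the thin neighborhood --- precludes an interior negative minimum. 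Decreasing $\lambda$, put $\lambda^*:=\inf\{\mu:\ w_\nu\geq0\text{ in }\{x<\nu\}\text{ for every }\nu\geq\mu\}$. If $\lambda^*=-\infty$, then $w_\nu\geq0$ for all $\nu$, so $u$ is monotone in $x$; but the nodal set of $u$ is asymptotic to the four half-lines, so for $|y|$ large the line $\{y\}\times\mathbb{R}$ meets it in two points and $x\mapsto u(x,y)$ changes sign twice, a contradiction. Hence $\lambda^*\in\mathbb{R}$, $w_{\lambda^*}\geq0$ in $\{x<\lambda^*\}$, and by the strong maximum principle either $w_{\lambda^*}\equiv0$ or $w_{\lambda^*}>0$ in $\{x<\lambda^*\}$; the latter, with the uniform control of $w_\lambda$ near infinity obtained above, would let $w_\lambda\geq0$ persist for $\lambda$ slightly below $\lambda^*$, contradicting minimality. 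Therefore $w_{\lambda^*}\equiv0$: $u$ is even about $\{x=\lambda^*\}$, and in particular its four ends are symmetric about this line. Running the same argument vertically yields a symmetry line $\{y=\mu^*\}$; after translating by $(\lambda^*,\mu^*)$ and possibly replacing $u$ by $-u$, the resulting $\bar u$ satisfies (\ref{even}).

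\emph{Step 3: monotonicity.} The inequality $w_\lambda\geq0$ in $\{x<\lambda\}$ holds for every $\lambda\geq\lambda^*$; evaluating it at $(2\lambda-x_0,y_0)$ with $x_0>\lambda^*$ and letting $\lambda$ decrease to $x_0$ shows that $x\mapsto u(x,y_0)$ is monotone on $(\lambda^*,+\infty)$, and likewise $y\mapsto u(x_0,y)$ is monotone on $(\mu^*,+\infty)$. Translating and, if necessary, changing the sign of $\bar u$ (the relative sign of the two monotonicities being dictated by the alternation of the ends through the factors $(-1)^j$ in (\ref{def w})), this gives $\partial_x\bar u\leq0$ in $\{x>0\}$ and $\partial_y\bar u\geq0$ in $\{y>0\}$. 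Finally $\partial_x\bar u$ solves $L(\partial_x\bar u)=0$ and vanishes on $\{x=0\}$ by evenness; being non-positive on $\{x>0\}$, the strong maximum principle forces $\partial_x\bar u<0$ throughout $\{x>0\}$ --- otherwise $\bar u$ would be independent of $x$, impossible for a $4$-ended solution --- and similarly $\partial_y\bar u>0$ in $\{y>0\}$. In particular $\partial_x\bar u<0$ and $\partial_y\bar u>0$ in $Q^{\llcorner}$.

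\emph{Main obstacle.} The algebra of Step 1 is routine; the substance of the proof is Step 2, namely making the moving plane method function in this non-compact, non-decaying situation. The two delicate points are the initialization of the sliding --- which rests on a precise description of $u$ along each of its ends --- and the operation of a maximum principle for $\Delta-c_\lambda$ on the unbounded half-plane even though $c_\lambda$ fails to be non-negative near the nodal set; both are handled by combining the exponential convergence of $u$ to $\pm1$, the condition $F''(\pm1)>0$, and narrow-domain estimates in the tube around the nodal set.
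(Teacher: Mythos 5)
The paper does not prove this theorem; it is stated as a citation to C.~Gui's work \cite{2011arXiv1102.4022G}, so there is no in-paper argument to compare against, only the external reference. That said, your two-stage scheme --- Hamiltonian (balancing) identities to force the end directions into antipodal pairs, followed by the method of moving planes in the two coordinate directions --- is the natural route and is consistent with the balancing machinery the paper develops in Section~4 for its own compactness argument. Step~1 is sound: the two one-directional Hamiltonian identities do give $\sum_j\cos\theta_j=\sum_j\sin\theta_j=0$, and the cyclic ordering $\theta_1<\theta_2<\theta_3<\theta_4<\theta_1+2\pi$ then does force $\theta_3=\theta_1+\pi$, $\theta_4=\theta_2+\pi$. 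You assert this last implication as elementary; it is true, but it deserves the few lines it takes (writing $\mathtt{e}_1+\mathtt{e}_3=-(\mathtt{e}_2+\mathtt{e}_4)$, matching directions and magnitudes, and eliminating all sub-cases except $\theta_3=\theta_1+\pi$), since $\sum\mathtt{e}_j=0$ without the ordering does not give antipodal pairs.

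The real delicacy is in Step~2, and there your account is compressed in two places where the argument could genuinely go wrong if one is not careful. First, the sign conventions are inconsistent: with the paper's normalization ($u\to+1$ along the $y$-axis, $u\to-1$ along the $x$-axis), the horizontal slide produces $w_\lambda\leq 0$, not $\geq 0$, and the vertical slide produces the opposite inequality; you gesture at this at the end via ``the relative sign of the two monotonicities,'' but the initialization as written mixes the two conventions. Second, the claim that ``near the other two ends $u^\lambda$ is close to $+1$'' is not obviously true pointwise: for a point ${\tt x}$ far along the left-going end at angle $\pi-\theta$, the reflected point $2\lambda{\tt e}_x-{\tt x}+\ldots$ lies at bounded distance (namely $\approx 2\lambda\sin\theta$) from the line of the end at angle $\theta$, so whether $u^\lambda$ is near $+1$ or $-1$ there depends on the sign of a signed distance that must be computed, not asserted. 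It does come out with the right sign, but precisely because of the antipodal pairing in Step~1 and the alternating signs $(-1)^j$, and this deserves to be spelled out. Likewise, the narrow-domain maximum principle on $\{x<\lambda\}$ must be applied on the union of the tube around the nodal set of $u$ and the tube around the nodal set of $u^\lambda$ --- two unbounded sets --- and the persistence of $w_\lambda\not\equiv 0\Rightarrow w_\lambda>0$ below $\lambda^*$ needs the uniform exponential estimates of Lemma~4.2 to rule out degeneration at infinity. None of this is fatal --- indeed it is what Gui's paper does --- but as written it is a sketch of the delicate part rather than a proof of it. Finally, in Step~3 the sentence ``evaluating it at $(2\lambda-x_0,y_0)\ldots$ and letting $\lambda$ decrease to $x_0$'' does not parse as stated (that point lies in $\{x>\lambda\}$, outside the comparison region); the correct formulation takes $\lambda^*<x_0<x_1$, sets $\lambda=(x_0+x_1)/2$, and reads the inequality $w_\lambda\geq 0$ at $(x_0,y_0)$.
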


Thanks to this result, we can define the moduli space of $4$-ended solutions by~:
\begin{definition}
The set  $\mathcal{M}_{4}^{even}$ is defined to be the set of $u \in \mathcal{S}_{4}$  which are solutions of (\ref{AC}), are even with respect to the $x$-axis and the $y$-axis and which tend to $+1$ as at infinity along the $y$-axis (and tend to $-1$ at infinity along the $x$-axis). In particular, 
\[
\partial_x u <0 \qquad \mbox{and} \qquad \partial_y u >0 ,
\]
in the upper right quadrant $Q^{\llcorner}$.
\label{de:002}
\end{definition}

\medskip

When studying $\mathcal M^{even}_4$, we restrict our attention to functions which are even with respect to the $x$-axis and the $y$-axis and, in this case, a solution $u \in \mathcal M^{even}_4$ is said to be {\em  even-nondegenerate} if there is no $w \in W^{2,2}(\mathbb R^2)- \{0\}$, which is symmetric with respect to the $x$-axis and the $y$-axis, belongs to the kernel of
\[
L : =  -\Delta + F''(u),
\]
and which decays exponentially at infinity.

\medskip

In the equivariant case (namely solutions which are invariant under both the symmetry with respect to the $x$-axis and the $y$-axis), Theorem~\ref{th:ms} reduces to~:
\begin{theorem}   \cite{dkp-2009}
Assume that $u \in \mathcal M_4^{even}$ is {\em even-nondegenerate}, then, close to $u$, $\mathcal M_4^{even}$ is a $1$-dimensional smooth manifold.
\end{theorem}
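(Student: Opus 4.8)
The plan is to transpose to the $\mathbb Z_2\times\mathbb Z_2$-equivariant setting the moduli space argument of \cite{dkp-2009} which underlies Theorem~\ref{th:ms}. Fix an even-nondegenerate $u_0\in\mathcal M_4^{even}$ and write $u_0=u_{\lambda_0}+v_0$ with $v_0\in W^{2,2}(\mathbb R^2)$; since $u_0$ is even in both variables, its ends form an \emph{even} configuration, so $\lambda_0$ lies in the set $\Lambda^{4,even}_{ord}$ of ordered $4$-tuples whose union is invariant under the two reflections --- a set whose angles are forced to be $\theta,\pi-\theta,\pi+\theta,2\pi-\theta$ for some $\theta\in(0,\pi/2)$. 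Let $\mathcal S_4^{even}\subset\mathcal S_4$ be the functions even in $x$ and in $y$; through the decomposition operator $\mathcal J$, a neighbourhood of $u_0$ in $\mathcal S_4^{even}$ is modelled on $W^{2,2}_{sym}(\mathbb R^2)\times\Lambda^{4,even}_{ord}$, where $W^{2,2}_{sym}$ denotes the invariant Sobolev space. The map $\Phi(u):=\Delta u-F'(u)$ sends $\mathcal S_4^{even}$ smoothly into $L^2_{sym}(\mathbb R^2)$, and near $u_0$ the set $\mathcal M_4^{even}$ is precisely $\Phi^{-1}(0)$; it therefore suffices to show that, close to $u_0$, $\Phi$ behaves as a submersion whose zero fibre is a smooth curve.

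The first task is the asymptotic analysis of the Jacobi operator $L_{u_0}=-\Delta+F''(u_0)$ at the four ends. Near $\lambda_j^+$ it is an exponentially small perturbation of $-\Delta+F''(H)$, whose transverse ODE $-\partial_s^2+F''(H(s))$ has $0$ as a \emph{simple, isolated} eigenvalue with eigenfunction $H'$. Hence $L_{u_0}$, acting between weighted H\"older or Sobolev spaces that impose a slight exponential decay transverse to the ends, is Fredholm; one then enlarges its domain by the finite-dimensional \emph{deformation space} $\mathcal D$ spanned by the modes $H'(t)\,(a_j+b_js_j)$ attached to the four ends (the $a_j$ being infinitesimal offsets, the $b_j$ infinitesimal angles of the four lines). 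The decisive point for the present statement is the isotypic decomposition of $\mathcal D$ under $\mathbb Z_2\times\mathbb Z_2$: the group acts simply transitively on the four ends, so both the offset modes and the angle modes form one copy of the regular representation, and the invariant part $\mathcal D^{sym}$ is only $2$-dimensional, spanned by one symmetric offset mode and one symmetric angle mode. In particular $\partial_xu_0$, $\partial_yu_0$ and the rotation field $x\partial_yu_0-y\partial_xu_0$ --- which account for three of the four dimensions of $T_{u_0}\mathcal M_4$ --- transform in the three nontrivial characters and play no role here.

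The second task is to upgrade Fredholmness to surjectivity with a $1$-dimensional kernel, and this is exactly where even-nondegeneracy enters. Since $L_{u_0}$ is self-adjoint, the hypothesis --- no nonzero symmetric, exponentially decaying Jacobi field --- says precisely that $L_{u_0}$ has trivial kernel on the decaying symmetric space, so that the only obstructions to the relevant linear equation are the finitely many geometric ones detected by the weighted theory. One then checks, exactly as in \cite{dkp-2009}, that the balancing relations pair off half of $\mathcal D^{sym}$ against these obstructions: the symmetric offset mode produces an obstruction, whereas the symmetric angle mode --- which is nothing but the infinitesimal change of $\theta$ and visibly keeps the configuration inside $\Lambda^{4,even}_{ord}$ --- does not. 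The augmented invariant operator is therefore onto, with kernel exactly $1$-dimensional, and applying the implicit function theorem (in the submersion form used in \cite{dkp-2009}) in the invariant category shows that, near $u_0$, $\mathcal M_4^{even}$ is a smooth $1$-dimensional manifold, tangent to the angle direction.

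The main obstacle is the combination of these two tasks: carrying out the weighted Fredholm theory for $L_{u_0}$ on the \emph{symmetric} spaces and then proving the solvability/balancing statement that fixes the dimension at $1$. Concretely one must show that, of the two invariant deformation modes, the offset one genuinely obstructs --- so that the kernel does not jump to dimension $2$ --- while the angle one lifts to an honest deformation; this requires a careful bookkeeping of the interaction between the $\mathbb Z_2\times\mathbb Z_2$-isotypic components near the ends, the indicial root $0$ of the transverse operator, and the boundary terms at infinity in the integration by parts that detects the obstructions. Everything else --- smoothness of $\Phi$, the chart $\mathcal J$ on $\mathcal S_4^{even}$, and the final implicit-function-theorem step --- is routine once this equivariant linear theory is in place.
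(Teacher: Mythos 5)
You correctly identify that the paper does not prove this theorem itself; it imports it from \cite{dkp-2009} (Section~8 and Theorem~2.2 there), and the only thing the present paper offers is the heuristic that the formal dimension of the moduli space equals the number of ends and that, modulo the $\mathbb Z_2\times\mathbb Z_2$ symmetries, a solution in $\mathcal M^{even}_4$ has a single end. Your proposal is a faithful transposition of the Fredholm/Lyapunov--Schmidt argument of \cite{dkp-2009} to the equivariant category: the chart via the decomposition operator $\mathcal J$ restricted to even functions, the weighted Fredholm theory for the Jacobi operator near the ends, the enlargement of the domain by the finite-dimensional deformation space, and the isotypic decomposition giving $\dim\mathcal D^{sym}=2$ with a one-dimensional symmetric cokernel. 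This is exactly the equivariant reduction the citation intends, and the resulting index count gives a one-dimensional solution set.

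One overstatement should be corrected. You claim that of the two invariant deformation modes, the offset mode always carries the obstruction while the angle mode is always unobstructed, and you justify this by noting that the angle mode ``visibly keeps the configuration inside $\Lambda^{4,even}_{ord}$.'' That justification does not distinguish the two modes: by construction both the invariant offset mode and the invariant angle mode preserve the class of even configurations, since that invariance is precisely what defines $\mathcal D^{sym}$. Moreover, the conclusion is not what the paper itself asserts: the discussion closing the nondegeneracy section states that, near any given $u$, the curve $\mathcal M^{even}_4$ can be parametrized \emph{either} by the angle \emph{or} by the offset, so the constrained direction in $\mathcal D^{sym}$ can be either mode (or a nontrivial linear combination) depending on the solution. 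What the argument actually needs, and what \cite{dkp-2009} supplies, is only that the pairing from $\mathcal D^{sym}$ to the one-dimensional symmetric cokernel is nontrivial, so that the augmented operator is onto with a one-dimensional kernel; which particular combination lifts to an actual deformation is immaterial for the statement. With that overstatement removed, the proposal is sound and takes essentially the route the paper delegates to \cite{dkp-2009}.
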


\medskip

Any solution $u \in \mathcal M_4^{even}$ has a nodal set which is asymptotic to $4$ half oriented affine lines and, given the symmetries of $u$, these half oriented affine lines are images of each other by the symmetries with respect to the $x$-axis and the $y$-axis. In particular, there is at most one of these half oriented affine line
\[
\lambda : =  r \, {\tt e}^\perp + \mathbb R \, {\tt e} ,
\]
which is included in the upper right quadrant $Q^{\llcorner}$. Writing ${\tt e} = (\cos \theta, \sin \theta)$ where $\theta \in (0, \pi/2)$, we define 
\[
\begin{array}{cccc} 
\mathcal F : & {\mathcal M}_4^{even} & \to & (-\pi/4, \pi / 4) \times \mathbb R,\\[3mm] 
& u & \mapsto & (\theta - \pi/4, r). 
\end{array}
\]
For example, the image by $\mathcal F$ of the {\em saddle solution} defined in \cite{MR1198672} is precisely $(0,0)$, while the images by $\mathcal F$ of the solutions constructed in \cite{MR2557944} correspond to parameters $(\theta, r)$ where $\theta$ is close to $\pm \pi/4$ and $r$ is close to $\mp \infty$.

\medskip

\begin{remark}
Let us observe that, if $u \in \mathcal M_4^{even}$, then $\bar u$ defined by
\[
\bar u(x,y) = - u(y-x),
\]
also belongs to $\mathcal M_4^{even}$ and
\[
\mathcal F (\bar u) = - \mathcal F (u) .
\]
\label{re:1}
\end{remark}

In this paper, we are interested in the understanding of $\mathcal M_4^{even}$. To begin with, we prove that~:

\begin{theorem} [Nondegeneracy]
Any $u \in \mathcal M_4$ is nondegenerate and hence any $u \in \mathcal M_4^{even}$ is even-nondegenerate.
\label{th:1}
\end{theorem}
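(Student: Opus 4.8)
The plan is to prove nondegeneracy by a Jacobi-field / moving-plane argument, exploiting the two axes of symmetry and the monotonicity provided by Gui's theorem. Suppose $u \in \mathcal M_4$ and, after a rigid motion, assume $u = \bar u$ is even in both $x$ and $y$ and satisfies $\partial_x u < 0$, $\partial_y u > 0$ in $Q^{\llcorner}$. Let $w \in W^{2,2}(\mathbb R^2) \setminus \{0\}$ lie in the kernel of $L = -\Delta + F''(u)$ and decay exponentially. The first step is to decompose $w$ into its four parts with definite parity under $(x,y) \mapsto (-x,y)$ and $(x,y) \mapsto (x,-y)$; since $u$ is even in both variables, $F''(u)$ is too, so $L$ commutes with both reflections and each parity component of $w$ is itself a decaying $W^{2,2}$ element of $\ker L$. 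It therefore suffices to rule out a nonzero decaying Jacobi field in each of the four symmetry classes separately.

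The key tool is a positive (super)solution built from the derivatives of $u$. Because $u$ solves \eqref{AC}, the functions $\partial_x u$ and $\partial_y u$ are exact solutions of $L\phi = 0$ (differentiating the equation), and by Gui's theorem they have a fixed sign in the open quadrant $Q^{\llcorner}$. The idea is then the standard one from the proof of the de Giorgi conjecture in the plane (Berestycki--Caffarelli--Nirenberg / Ghoussoub--Gui, as adapted in \cite{dkp-2009}): for $w$ in the symmetry class in which $\partial_x u$ lies (i.e. odd in $x$, even in $y$), write $w = \sigma\, \partial_x u$ on $Q^{\llcorner}$ where $\sigma = w/\partial_x u$, derive the linear elliptic equation $\mathrm{div}\,\big((\partial_x u)^2 \nabla \sigma\big) = 0$ satisfied by $\sigma$, multiply by $\sigma \zeta^2$ for a cutoff $\zeta$, and use the decay of $w$ together with the Liouville-type estimate $\int_{B_R} (\partial_x u)^2 |\nabla \sigma|^2 \to 0$ to conclude $\sigma$ is constant, hence $w$ is a multiple of $\partial_x u$; but $\partial_x u \notin W^{2,2}(\mathbb R^2)$ (it does not decay — it is $O(1)$ along the $x$-axis ends), so the multiple must be zero. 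The same argument with $\partial_y u$ handles the class odd in $y$, even in $x$.

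The two remaining classes — $w$ even in both $x$ and $y$, and $w$ odd in both — do not directly match a sign-definite derivative of $u$, and this is where the main work lies. For these one uses instead the derivatives with respect to the geometric parameters of the ends: by Theorem~\ref{th:ms}, the space $\mathcal M_4$ near $u$ is a $4$-dimensional manifold whose tangent directions are spanned by the two translations, the rotation, and one genuine modulus; the corresponding Jacobi fields have prescribed, explicitly controlled growth at infinity modeled on $u_\lambda$ and its derivatives. A decaying kernel element in the bi-even or bi-odd class would have to be $L^2$-orthogonal, in a suitable weighted sense, to all of these, and a Pohozaev-type / integration-by-parts identity — testing $Lw = 0$ against $x\,\partial_x u + y\,\partial_y u$ (the generator of dilations applied to $u$) and against $w$ itself on large balls, and carefully tracking the boundary terms on $\partial B_R$ using the asymptotics of $u - u_\lambda$ and the exponential decay of $w$ — forces $w \equiv 0$. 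Alternatively, and probably more cleanly, one reduces to a one-dimensional problem near each end: the exponential decay of $w$ means $w$ is negligible outside a neighborhood of the core cross, so the equation $Lw=0$ can be analyzed by separation of variables against the heteroclinic profile $H'$ (which spans the kernel of the one-dimensional linearized operator $-\partial_t^2 + F''(H)$ on the relevant weighted space), again yielding that $w$ must vanish.

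The hard part will be the two ``diagonal'' symmetry classes: one has to make precise the claim that every decaying Jacobi field is captured by the finite-dimensional family of Theorem~\ref{th:ms} and that the $L^2$-decay condition kills all of them, which requires a careful Fredholm/mapping analysis of $L$ in the weighted Sobolev spaces adapted to the four ends (indicial roots at infinity, no exceptional values in the range where both $W^{2,2}$-decay and the geometric Jacobi fields live). Once the weighted-space framework of \cite{dkp-2009} is in place, the sign-definite-derivative argument disposes of the two ``off-diagonal'' classes immediately, and the remaining input is a moving-plane or sliding argument — using again $\partial_x u < 0$, $\partial_y u > 0$ in $Q^{\llcorner}$ — to show the bi-even and bi-odd pieces cannot be nontrivial without violating either the decay or the boundary conditions on the axes. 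I would organize the write-up as: (1) reduce to the four symmetry classes; (2) dispose of the two classes aligned with $\partial_x u$, $\partial_y u$ via the $\mathrm{div}((\partial_i u)^2 \nabla \sigma)=0$ trick; (3) set up the weighted spaces and the asymptotics of elements of $\mathcal M_4$ near $u$; (4) conclude in the two diagonal classes by the Pohozaev identity plus the one-dimensional analysis near the ends.
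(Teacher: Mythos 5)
Your decomposition into parity classes and your disposal of the two ``off-diagonal'' classes via the Ghoussoub--Gui $\mathrm{div}\bigl((\partial_i u)^2\nabla\sigma\bigr)=0$ device are exactly what the paper does, and that part of your plan is sound. But there are two serious problems with the rest.

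First, you have misclassified which cases are hard. You put the bi-odd class (odd in $x$ and odd in $y$) on the ``hard'' side, but it is actually the \emph{easiest}: such a kernel element vanishes on both coordinate axes, so every one of its nodal domains is contained in a single open quadrant, and Gui's theorem gives a sign for both $\partial_x u$ and $\partial_y u$ in that quadrant. The same integration-by-parts argument that kills the two off-diagonal classes kills this one too, with nothing new. The only genuinely hard class is bi-even, precisely because a bi-even $\varphi$ need \emph{not} vanish anywhere on the axes, so its nodal domains are not automatically confined to a half-plane and the Ghoussoub--Gui argument does not start.

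Second, for the bi-even class you offer a list of possible ideas (a Pohozaev/dilation identity, Fredholm theory in weighted spaces, moving planes, separation of variables near the ends) without a concrete argument, and none of them is the route the paper takes. The paper's mechanism has two ingredients you never mention. (i) It constructs an explicit positive \emph{supersolution} $\Psi>0$ of $L\Psi\le 0$ outside a compact set, obtained by transplanting $(1-e^{-\mu t})H'(s)$ along each end and cutting off; this is the content of the paper's Proposition on barriers. (ii) Using $\Psi$ and the maximum principle, one rules out bounded nodal domains of $\varphi$ far from the origin; combined with the Ghoussoub--Gui argument (which excludes an unbounded nodal domain lying in a half-plane), one deduces that the bi-even $\varphi$ must have a \emph{definite sign} outside a large ball. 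Only after this sign information is established does the projection onto $H'$ enter: one sets $g(t)=\int_{\mathbb R}\tilde\varphi(s,t)\,H'(s)\,ds$, which is then nonnegative for large $t$, and derives from the Refined Asymptotics the differential inequality $|g'(t)|\le Ce^{-\beta t}\int_t^\infty g$, which forces $g\equiv 0$ and hence $\varphi\equiv 0$ by unique continuation. Your ``separation of variables against $H'$'' is a shadow of step (iii), but without steps (i)--(ii) there is no sign condition on $\varphi$, the integral $g(t)$ has no definite sign, and the differential inequality yields nothing. As written, your proposal for the bi-even case does not close, and the missing ideas (the supersolution and the nodal-domain classification that produces the eventual sign of $\varphi$) are the heart of the theorem.
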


As a consequence of this result, we find that all connected components of $\mathcal M_4^{even}$ are one-dimensional smooth manifolds. Moreover, as a byproduct of the proof of this result, we  also obtain that the image by $\mathcal F$ of any connected component of $\mathcal M_4^{even}$ is a smooth immersed curve in $(-\pi/4, \pi/4)\times \mathbb R$. Thanks to Remark~\ref{re:1}, we find that the image of any connected component of $\mathcal M_4^{even}$ by $\mathcal F$ is invariant under the action of the symmetry with respect to $(0,0)$.

\medskip

To proceed, we define the {\em classifying map} to be the projection of $\mathcal F$ onto the first variable
\[
\begin{array}{cccc} 
\mathcal P : & {\mathcal M}_4^{even} &      \to       & (-\pi/4, \pi / 4) ,\\[3mm] 
                      &                   u                      & \mapsto & \theta - \pi/4. 
\end{array}
\]
Our second result reads~:
\begin{theorem} [Properness]
The mapping $\mathcal P$ is proper, i.e. the pre-image of a compact in $(-\pi/4, \pi/4)$ is compact in $\mathcal M^{even}_4$ (endowed with the topology induced by the one of $\mathcal S_4$). 
\end{theorem}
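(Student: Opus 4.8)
The plan is to prove that $\mathcal{P}^{-1}(K)$ is sequentially compact for every compact $K\subset(-\pi/4,\pi/4)$; since the topology of $\mathcal{S}_4$ is the pullback under the injective map $\mathcal{J}$ of the metrizable topology of $W^{2,2}(\mathbb{R}^2)\times\Lambda^4_{ord}$, this is equivalent to compactness. So take $u_n\in\mathcal{P}^{-1}(K)$ and write $\mathcal{J}(u_n)=(v_n,\lambda_n)$; by evenness, $\lambda_n$ is encoded by a pair $(\theta_n,r_n)$ with $\theta_n-\pi/4=\mathcal{P}(u_n)\in K$, so after passing to a subsequence $\theta_n\to\theta_\infty\in(0,\pi/2)$. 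Since $|u_n|\le 1$, elliptic estimates give uniform $\mathcal{C}^k_{loc}$ bounds, and a further subsequence converges in $\mathcal{C}^2_{loc}$ to an entire solution $u_\infty$ of (\ref{AC}) which is even in $x$ and $y$ and satisfies $\partial_x u_\infty\le 0$, $\partial_y u_\infty\ge 0$ in $Q^{\llcorner}$. It then suffices to show (a) that the shifts $r_n$ stay bounded, so that $\lambda_n$ subconverges in $\Lambda^4_{ord}$ to some $\lambda_\infty$ whose first end makes the angle $\theta_\infty$ with the $x$-axis, and (b) that $v_n=u_n-u_{\lambda_n}$ converges in $W^{2,2}(\mathbb{R}^2)$ to $u_\infty-u_{\lambda_\infty}$. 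Granting (a) and (b), $u_\infty\in\mathcal{S}_4$, hence $u_\infty\in\mathcal{M}_4$; moreover, $\partial_x u_\infty\equiv 0$ or $\partial_y u_\infty\equiv 0$ would force $u_\infty$ to be one-dimensional with a straight nodal line, which is incompatible with its first end having angle $\theta_\infty\in(0,\pi/2)$, so the strong maximum principle gives $\partial_x u_\infty<0$ and $\partial_y u_\infty>0$ in $Q^{\llcorner}$; together with the behaviour at infinity along the axes this places $u_\infty\in\mathcal{M}_4^{even}$ with $\mathcal{P}(u_\infty)=\theta_\infty-\pi/4\in K$, which is the claim.

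The heart of the argument is (a), and I would argue by contradiction. Suppose $r_n\to+\infty$ along a subsequence; the case $r_n\to-\infty$ reduces to this one via Remark~\ref{re:1}. From $\partial_x u_n<0$, $\partial_y u_n>0$ in $Q^{\llcorner}$ together with the evenness one checks that $u_n<0$ on the strip $\{|y|<y_n^*\}$, where $y_n^*>0$ is the unique zero of $y\mapsto u_n(0,y)$ on $y>0$; hence the nodal set of $u_n$ lies in $\{|y|\ge y_n^*\}$, and $y_n^*\to+\infty$. I would then translate: set $\tilde u_n(x,y):=u_n(x,y+y_n^*)$, so that $\tilde u_n$ solves (\ref{AC}), is even in $x$, satisfies $\tilde u_n(0,0)=0$, and obeys $\partial_x\tilde u_n<0$, $\partial_y\tilde u_n>0$ on $\{x>0,\ y>-y_n^*\}$. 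Along a subsequence $\tilde u_n\to\tilde u_\infty$ in $\mathcal{C}^2_{loc}$, where $\tilde u_\infty$ is a non-constant (since $\tilde u_\infty(0,0)=0$) entire solution, even in $x$, with $\partial_y\tilde u_\infty\ge 0$ on $\{x\ge 0\}$ and hence, by evenness, on all of $\mathbb{R}^2$. By the strong maximum principle, either $\partial_y\tilde u_\infty\equiv 0$, so $\tilde u_\infty$ depends only on $x$, is even in $x$, vanishes at $0$, and is therefore $\equiv 0$, or $\partial_y\tilde u_\infty>0$ everywhere, in which case the de Giorgi theorem in the plane \cite{MR1637919} forces $\tilde u_\infty$ to be one-dimensional, and evenness in $x$ with $\tilde u_\infty(0,0)=0$ then give $\tilde u_\infty(x,y)=H(y)$. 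In both cases $\tilde u_\infty\in\{0,\ H(y)\}$ and $\{\tilde u_\infty=0\}\subseteq\{y=0\}$.

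This I would contradict by following the nodal set away from the translated vertex. In $Q^{\llcorner}$ the nodal set of $u_n$ is, by the monotonicities, the graph of an increasing function $\psi_n$ with $\psi_n(0)=y_n^*$ which is asymptotic to the half line $\lambda_1^+$, so the nodal set of $\tilde u_n$ in $Q^{\llcorner}$ is the graph of the increasing function $\tilde\psi_n:=\psi_n-y_n^*\ge 0$, which starts at the origin and turns, far out, into the direction making the angle $\theta_n$ with the $x$-axis. If a radius $\rho$ and a constant $\delta>0$, both independent of $n$, can be chosen so that the point $p_n$ where the graph of $\tilde\psi_n$ exits $B_\rho$ has second coordinate $\ge\delta$ and $\tilde u_n$ takes values within $\epsilon$ of $\pm 1$ on $B_1(p_n)$, then passing to the limit yields a point $p$ with $|p|\le\rho$, second coordinate $\ge\delta$, $\tilde u_\infty(p)=0$ and $\sup_{B_1(p)}|\tilde u_\infty|\ge 1-\epsilon$; the first three properties exclude $\tilde u_\infty(x,y)=H(y)$ and the last excludes $\tilde u_\infty\equiv 0$, the desired contradiction.

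I expect the previous paragraph to be the principal obstacle: the existence of $\rho$ and $\delta$ uniform in $n$ amounts to a uniform control of the ends of $u_n$—that the nodal set of $u_n$ converges to $\lambda_1^+$ at a rate, and that the ``core'' region of $u_n$ has a size, both uniform over the compact range of $\theta_n$. This uniformity, which is also exactly what is needed for statement (b), I would obtain from the nondegeneracy of Theorem~\ref{th:1} combined with the linear Fredholm theory of \cite{dkp-2009}: these furnish uniform a priori bounds on $\|u_n-u_{\lambda_n}\|_{W^{2,2}(\mathbb{R}^2)}$ and uniform exponential decay of $u_n-u_{\lambda_n}$ as long as $\theta_n$ ranges in a compact subset of $(0,\pi/2)$. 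Granting this, the convergence in (b) follows from the $\mathcal{C}^2_{loc}$ convergence by dominated convergence, and the proof is complete.
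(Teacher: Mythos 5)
Your overall plan (extract a $\mathcal{C}^2_{loc}$-convergent subsequence, control the asymptotic line $\lambda_n$ by controlling $\theta_n$ and $r_n$, upgrade to convergence in $\mathcal S_4$) matches the paper's, and your reduction of the translated limit $\tilde u_\infty$ to $\{0,\ H(y)\}$ via the de Giorgi theorem is essentially the paper's Lemma~\ref{lema compac 1}. But there is a genuine gap in your proposal, and you have in fact put your finger on it yourself: the existence of a radius $\rho$ and height $\delta$ \emph{uniform in $n$} controlling where the nodal set of $\tilde u_n$ exits $B_\rho$, together with the associated uniform exponential decay. You propose to obtain this from ``the nondegeneracy of Theorem~\ref{th:1} combined with the linear Fredholm theory of \cite{dkp-2009},'' claiming these give a priori bounds on $\|u_n-u_{\lambda_n}\|_{W^{2,2}}$ uniform over compact ranges of $\theta_n$. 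This is circular. Nondegeneracy and the Fredholm theory give, for each fixed solution $u$, an estimate of the form $\|w\|\le C(u)\,\|L_u w\|$ on appropriate weighted spaces, but the constant $C(u)$ and the radius of the neighborhood on which the implicit function theorem is effective depend on $u$; nothing in that local theory prevents these quantities from degenerating along the sequence $u_n$. Indeed, obtaining a $C$ \emph{uniform along the sequence} is precisely what properness asserts, so it cannot be fed in as an input. Concretely: even with $\theta_n$ in a compact range, nothing a priori stops the nodal set of $\tilde u_n$ from hugging the $x$-axis over an $n$-dependent long stretch before turning (which would make the exit point of $B_\rho$ have vanishing second coordinate), and no amount of local Fredholm theory rules this out.

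The tool the paper uses to close exactly this gap is the family of balancing formulae (Lemma~\ref{le:PI} and the identities~\eqref{eq:bc1}--\eqref{eq:bc3}), which you do not invoke at all. These identities express $\cos\theta_n$, $\sin\theta_n$ and $r_n$ as explicit integrals of the energy density of $u_n$ over the coordinate axes. In the paper's Lemma~\ref{lema compac 1}, after the translation by $y_n^*$ and the conclusion $w=H(y)$, one passes to the limit in the identity $c_0\cos\theta_n=\int_{x=0,\,y>0}\bigl(\tfrac12|\partial_y u_n|^2+F(u_n)\bigr)\,dy$ and obtains $c_0\cos\theta_*\ge c_0$, contradicting $\theta_*>0$ in one line; no uniform geometric control of the nodal set is needed in advance, because the conserved quantity transfers the compactness of $\theta_n$ directly into a quantitative bound on the energy along the axis. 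The same identities (with $X=\partial_x$, $X=\partial_y$, and $X=x\partial_y-y\partial_x$) then give the uniform slope control of Lemma~\ref{twist}, the convergence $r_n\to r_*$ in Lemma~\ref{shift}, and the uniform decay in Lemma~5.6. Without a replacement for this global conservation-law input, the uniform estimate at the heart of your step (a) (and of step (b)) is not established.
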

%
%
\medskip

The solutions with almost parallel ends constructed in \cite{MR2557944} belong to one of the connected component of $\mathcal M_4^{even}$ and we also know that the saddle solution also belongs to a connected component of $\mathcal M_4^{even}$. In principle, it could be possible that $\mathcal M_4^{even}$ contained many different connected  components and it could also be possible that $\mathcal M_4^{even}$ contained connected components which are diffeomorphic to $S^1$. Nevertheless, we prove that~:
\begin{theorem}
All connected components of $\mathcal M^{even}_4$ are diffeomorphic to $\mathbb R$, i.e. there is no closed loop in $\mathcal M_4^{even}$.
\end{theorem}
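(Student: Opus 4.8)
The plan is to reduce the statement to the assertion that the classifying map $\mathcal P$ is a \emph{submersion}, i.e.\ that $d\mathcal P$ never vanishes on $\mathcal M_4^{even}$, and then to close the argument by soft topology. Indeed, by Theorem~\ref{th:1} the space $\mathcal M_4^{even}$ is a smooth $1$-dimensional manifold, so every connected component $\mathcal C$ is diffeomorphic to $\mathbb R$ or to $S^1$. Assuming $\mathcal P$ has no critical point, $\mathcal P|_{\mathcal C}$ is a local diffeomorphism, hence an open map, so its image $I$ is an open subinterval of $(-\pi/4,\pi/4)$; since a connected component is closed in $\mathcal M_4^{even}$ and $\mathcal P$ is proper, $\mathcal P|_{\mathcal C}\colon\mathcal C\to I$ is a proper local diffeomorphism, hence a finite covering map, and since $I$ is simply connected this covering is trivial, so connectedness of $\mathcal C$ forces $\mathcal C\cong I\cong\mathbb R$. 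It should be stressed that properness by itself is not enough: an $S^1$-component would simply map onto a compact subinterval, and the mod-$2$ degree of $\mathcal P$—well defined because $\mathcal P$ is proper—vanishes on such a component, so degree theory does not exclude it; the submersivity of $\mathcal P$ is the real issue. (For the present statement it would even suffice to show that $\mathcal P$ has no interior local minimum along $\mathcal M_4^{even}$, since a closed loop would carry one.)

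Suppose, for contradiction, that $d\mathcal P(u)=0$ at some $u\in\mathcal M_4^{even}$, and let $\Phi$ span the line $T_u\mathcal M_4^{even}$, so that $L\Phi=0$, $\Phi$ is bounded and even in both variables, and $\Phi\notin W^{2,2}(\mathbb R^2)$. The first task is to identify $\Phi$ at infinity. Near the end $\lambda_j$ a bounded Jacobi field is, up to exponentially decaying terms, of the form $(-1)^j(a_j+b_j s_j)H'(z_j)$, where $s_j$ is arclength along $\lambda_j^+$, $z_j$ the transverse signed variable, $b_j$ the infinitesimal rotation of the $j$-th end and $a_j$ its infinitesimal normal translation. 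Since the only end lying in $Q^{\llcorner}$ is, say, $\lambda_1$ and it makes the angle $\theta$, one has $d\mathcal P(u)\cdot\Phi=\dot\theta=b_1$; thus $d\mathcal P(u)=0$ gives $b_1=0$, and since the four ends are permuted by the reflections in the two axes (so the $b_j$ are determined up to sign by $b_1$) we get $b_j=0$ for every $j$: $\Phi$ has no rotational mode. On the other hand, because the image of $\mathcal C$ under $\mathcal F$ is an immersed curve, $d\mathcal F(u)\cdot\Phi=(0,\dot r)$ with $\dot r\neq0$, and a direct computation from the definition of $u_\lambda$ (using that for an even configuration the position parameters of $\lambda_1,\dots,\lambda_4$ are $(r,-r,r,-r)$) shows that $\Phi$ is asymptotic to $\dot r\sum_j\mathbb I_j\,H'(\mathrm{dist}^s(\,\cdot\,,\lambda_j))$. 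Since $H'>0$, after replacing $\Phi$ by $-\Phi$ if necessary, $\Phi$ is positive near each of the four ends; and since $-\Delta+F''(u)$ is coercive in the regions between the ends, where $u$ is close to $\pm1$, the maximum principle then yields $\Phi>0$ on $\mathbb R^2\setminus B_{R_0}$ for $R_0$ large.

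To reach a contradiction I would pair $\Phi$ against the translation Jacobi fields $\partial_x u$ and $\partial_y u$, whose signs on $Q^{\llcorner}$ are given by Gui's result ($\partial_x u<0$, $\partial_y u>0$). Applying Green's identity to $\Phi$ and $\partial_x u$ on $Q^{\llcorner}$: on the $x$-axis all boundary terms vanish by the parities of $u$ and $\Phi$; on the $y$-axis, since $\partial_x u\equiv0$ there, the boundary term reduces to $-\int_0^\infty\Phi(0,y)\,\partial_{xx}u(0,y)\,dy$; and the contribution of the circle at infinity equals, after the asymptotic bookkeeping at the ends, a nonzero multiple of $\sum_j b_j c_j$, where $c_j\neq0$ is the asymptotic coefficient of $\partial_x u$ at $\lambda_j$—hence a nonzero multiple of $d\mathcal P(u)\cdot\Phi$, because the $b_j$ are proportional to $b_1=d\mathcal P(u)\cdot\Phi$. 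Thus $d\mathcal P(u)=0$ forces $\int_{\mathbb R}\Phi(0,y)\,\partial_{xx}u(0,y)\,dy=0$ and, symmetrically, $\int_{\mathbb R}\Phi(x,0)\,\partial_{yy}u(x,0)\,dx=0$. Since $-\partial_x u$ is a positive Jacobi field on $Q^{\llcorner}$ vanishing on the $y$-axis, the Hopf lemma gives $\partial_{xx}u(0,y)<0$ for all $y\neq0$, and likewise $\partial_{yy}u(x,0)>0$ for all $x\neq0$. The same identities can be read off the Hamiltonian identity for (\ref{AC}) evaluated on the two axes, which expresses $\cos\theta$ and $\sin\theta$ as explicit functionals of $u|_{\{x=0\}}$ and $u|_{\{y=0\}}$; the hypothesis $d\mathcal P(u)=0$ then says precisely that $u$ is a critical point of these functionals along $\mathcal M_4^{even}$, and the contradiction should follow from a second-variation computation—using $L\Phi=0$, $L\Psi=-F'''(u)\Phi^2$ with $\Psi=\partial_s^2 u$, the positivity of $\Phi$ near infinity, and the monotonicity signs above—showing that the second variation has a definite sign, so that $\mathcal P$ can have no interior minimum (and in fact no critical point at all).

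The main obstacle is exactly this last step: ruling out that the constraints collected on $\Phi$—common-sign asymptotics and positivity near infinity, together with the two orthogonality relations against the sign-definite weights $\partial_{xx}u(0,\cdot)$ and $\partial_{yy}u(\cdot,0)$—can simultaneously hold. The natural tool, a ground-state substitution $w=\Phi/(-\partial_x u)$ on $Q^{\llcorner}$ (and $w=\Phi/\partial_y u$), gives a divergence-form equation with no zeroth order term and hence obeys the maximum principle; but the quotient is singular exactly where the reference field vanishes—on the $y$-axis for $-\partial_x u$, and at the corner and along the $x$-axis for $\partial_y u$—so one must control its behaviour both at the corner of the quadrant and at infinity, where $\Phi$ and the reference field decay at the same exponential rate in the region $u\to-1$ and their ratio is not a priori bounded. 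Carrying this out—equivalently, proving that the angle $\theta$ is a strictly monotone function along every connected component of $\mathcal M_4^{even}$—is where essentially all the analytic work lies; everything else is the soft topology of the first paragraph together with the routine bookkeeping of Jacobi-field asymptotics at the ends.
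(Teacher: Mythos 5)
Your route is genuinely different from the paper's, and it has a gap that you yourself flag. You try to show that $\mathcal P$ is a submersion on $\mathcal M_4^{even}$ (equivalently, that $\theta$ is strictly monotone along every component) and then conclude by the covering-space argument, which is indeed sound once submersivity is in hand. But the core analytic step --- ruling out a critical point of $\mathcal P$ by the second-variation / ground-state-substitution computation --- is left open: your last paragraph explicitly says that this is ``where essentially all the analytic work lies'' and does not supply it. As it stands the proposal therefore proves nothing new; it reduces the theorem to a stronger, unproven claim. (That stronger claim is plausibly true --- the paper conjectures $\mathcal P$ is a diffeomorphism --- but it is not established here nor in the paper.)

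The paper's argument is considerably softer and avoids the submersion question entirely. If a component $M$ were diffeomorphic to $S^1$, parameterize it by $\sigma\in S^1$ and observe that for every $\mathtt{x}$,
\[
0 \;=\; u(\mathtt{x},2\pi)-u(\mathtt{x},0) \;=\; \int_0^{2\pi}\partial_\sigma u(\mathtt{x},\sigma)\,d\sigma .
\]
Evaluating at the origin produces $\sigma_*$ with $\partial_\sigma u\bigl((0,0),\sigma_*\bigr)=0$; set $\phi:=\partial_\sigma u(\cdot,\sigma_*)$, a nontrivial bounded Jacobi field, even in both variables, vanishing at the origin. The symmetry plus $\phi(0,0)=0$ produce a nodal domain $\Omega$ of $\phi$ contained in a coordinate half-plane. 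The only delicate point is to choose $\Omega$ so that $\phi$ is bounded on it: decomposing $T_u\mathcal S_4=W^{2,2}\oplus\mathrm{Span}\{\partial_r u_\lambda,\partial_\theta u_\lambda\}$, either the $\partial_\theta u_\lambda$-component is absent (so $\phi$ is globally bounded) or, since $\partial_\theta u_\lambda$ grows linearly along the zero set of $u$, $\phi$ does not vanish along the nodal set of $u$ outside a large ball, and one picks a nodal domain avoiding that set, on which $\phi$ is again bounded. Running the Ghoussoub--Gui stability argument on $\Omega$ with $\psi=\partial_x u$ (or $\partial_y u$) and $h=\phi/\psi$, exactly as in Step~2 of the nondegeneracy proof, forces $\phi\equiv0$ on $\Omega$ and hence everywhere by unique continuation --- a contradiction. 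The contrast is instructive: your approach, if completed, would give the much stronger monotonicity of $\theta$ (and hence injectivity of $\mathcal P$ on components), while the paper's mean-value trick yields only the nonexistence of loops but does so with tools already developed for Theorem~\ref{th:1} and no new estimates.
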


Looking at the image by $\mathcal P$ of the connected component of $\mathcal M_4^{even}$ which contains  the saddle solution, we conclude from the above results that~:
\begin{theorem} [Surjectivity of $\mathcal P$]
The mapping $\mathcal P$ is onto.   
\label{th:4}
\end{theorem}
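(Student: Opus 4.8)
The plan is to examine the connected component $\mathcal{C}$ of $\mathcal{M}_4^{even}$ that contains the saddle solution $u_0$, and to show that already $\mathcal{P}(\mathcal{C}) = (-\pi/4,\pi/4)$. Three facts are available for free: since $\mathcal{F}(u_0)=(0,0)$ we have $\mathcal{P}(u_0)=0$; by the structure theorem quoted above (a consequence of the Nondegeneracy theorem together with the absence of closed loops in $\mathcal{M}_4^{even}$) the component $\mathcal{C}$ is diffeomorphic to $\mathbb{R}$; and since a connected component is closed in $\mathcal{M}_4^{even}$ and $\mathcal{P}$ is proper, the restriction $\mathcal{P}|_{\mathcal{C}}$ is again proper (the preimage of a compact set under $\mathcal{P}|_{\mathcal{C}}$ is a closed subset of a compact set).

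From this I would first extract three elementary properties of the set $I:=\mathcal{P}(\mathcal{C})\subset(-\pi/4,\pi/4)$. It is an interval containing $0$, being the continuous image of the connected set $\mathcal{C}$ together with $\mathcal{P}(u_0)=0$. It is closed in $(-\pi/4,\pi/4)$, because a proper continuous map into a locally compact Hausdorff space is a closed map, so $\mathcal{P}|_{\mathcal{C}}$ maps $\mathcal{C}$ onto a closed set. And it is not compact: if $I$ were compact, then $\mathcal{C}=(\mathcal{P}|_{\mathcal{C}})^{-1}(I)$ would be compact, contradicting $\mathcal{C}\cong\mathbb{R}$.

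Next I would invoke the symmetry of Remark~\ref{re:1}: the map $u\mapsto\bar u$ is an involutive homeomorphism of $\mathcal{M}_4^{even}$ satisfying $\mathcal{P}(\bar u)=-\mathcal{P}(u)$. It fixes the saddle solution, since $\bar u_0$ again lies in $\mathcal{M}_4^{even}$ and still has nodal set equal to the union of the lines $y=\pm x$ (the transformation only applies a rigid motion preserving this set and a change of sign), and the saddle solution is the unique such element, so $\bar u_0=u_0$. Hence the involution carries $\mathcal{C}$ onto itself, and therefore $I=\mathcal{P}(\mathcal{C})=-\mathcal{P}(\mathcal{C})=-I$ is symmetric about $0$. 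A subset of $(-\pi/4,\pi/4)$ which is an interval, contains $0$, is symmetric about $0$, is closed in $(-\pi/4,\pi/4)$ and is non-compact must be of the form $(-c,c)$ or $[-c,c]$ with $0<c\le\pi/4$; for $c<\pi/4$ the closed case is compact and the open case is not closed in $(-\pi/4,\pi/4)$, both excluded, so $c=\pi/4$ and $I=(-\pi/4,\pi/4)$. This gives surjectivity of $\mathcal{P}$.

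The main obstacle is not really the topological bookkeeping above, which is routine once the earlier theorems are granted; it is the use of the symmetry. Without the input that the saddle component $\mathcal{C}$ is invariant under the map of Remark~\ref{re:1}, properness together with $\mathcal{C}\cong\mathbb{R}$ would only yield that $\mathcal{P}(\mathcal{C})$ is a non-compact closed subinterval of $(-\pi/4,\pi/4)$ containing $0$, which need not fill the whole interval. The $\bar u_0=u_0$ step, in turn, rests squarely on the uniqueness (up to sign and rigid motion) of the solution with nodal set $\{y=\pm x\}$ recalled after Definition~\ref{de:001}.
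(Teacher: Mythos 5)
Your proof is correct and follows essentially the same route as the paper's argument at the end of Section 6: use the component $\mathcal C$ through the saddle solution, nondegeneracy plus the absence of closed loops to get $\mathcal C\cong\mathbb R$, properness to force $\mathcal P(\mathcal C)$ to be a non-compact closed subinterval, and the involution $u\mapsto -u(y,x)$ together with the Dang--Fife--Peletier uniqueness of the saddle solution to get symmetry of the image about $0$. If anything, you spell out two steps the paper leaves implicit — that $\mathcal P|_{\mathcal C}$ is a closed map and that $\mathcal C$ is invariant under the involution because the saddle solution is a fixed point — but the underlying strategy is the same.
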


As a consequence, for any $\theta \in (0, \pi/2)$, there exists a solution $u \in \mathcal M_4^{even}$ whose nodal set at infinity is asymptotic to the half oriented affine lines whose angles with the $x$-axis are given by $\theta, \pi-\theta, \pi+\theta$ and $2\pi -\theta$.

\medskip

Given all the evidence we have, it is tempting to conjecture that $\mathcal M_4^{even}$ has only one connected component and that the image of $\mathcal M_4^{even}$ by $\mathcal F$ is a smooth embedded curve. Moreover, it is very likely that $\mathcal P$ is a diffeomorphism from $\mathcal M_4^{even}$ onto $(-\pi/4, \pi/4)$. Observe that Theorem~\ref{th:4} already proves that $\mathcal P$ is onto. 

\medskip

To give credit to the above conjecture, in \cite{partII}, we will show that  $\mathcal M_4^{even}$ has only one connected component which contains both the saddle solution and the solutions constructed in \cite{MR2557944}. The proof of this last result is rather technical and uses tools which are different from the one needed to prove the results in the present paper and this is the reason why, we have chosen to present it in a separate paper \cite{partII}. 

\medskip

To complete this list of results, we mention an interesting by-product of the proof of Theorem~\ref{th:1}. Assume that $u$ is a solution of (\ref{AC}) and denote by 
\[
L := - \Delta + F''(u),
\] 
the linearized operator about $L$. Recall that, if $\Omega$ is a bounded domain in $\mathbb R^2$,  then the index of $L$ in $\Omega$ is given by the number of negative eigenvalues of the operator $L$ which belong to $W^{1,2}_0(\Omega)$. Following \cite{MR808112}, we have the~:
\begin{definition}
The function $u$, solution of (\ref{AC}), has finite Morse index if the index of every bounded domain $\Omega \subset \mathbb R^2$ has a uniform upper bound. 
\end{definition}

\medskip

And in this paper, we prove the~:
\begin{theorem}[Morse index]
Any $2k$-ended solution of (\ref{AC}) has finite Morse index. 
\label{th:2.8}
\end{theorem}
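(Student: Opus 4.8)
The plan is to establish finite Morse index by exhibiting, outside a large ball, a positive supersolution (or more precisely a positive function annihilated by a suitable subsolution operator) for the linearized operator $L = -\Delta + F''(u)$, and then invoke a standard argument showing that the existence of a positive decaying solution of $L w = 0$ on the complement of a compact set forces the index to be finite there. The starting point is the observation that, since $u$ is a $2k$-ended solution, away from a compact set $K$ the function $u$ is, up to a sign, uniformly close to a copy of the one-dimensional heteroclinic $H$ along each end; consequently $F''(u)$ is close to $F''(\pm 1)$, which is a strictly positive constant by the assumption $F''(\pm 1)\neq 0$ (and $F$ being a double well forces $F''(\pm 1)>0$). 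Hence there exist $c_0>0$ and $R_0>0$ so that $F''(u(\mathtt{x})) \geq c_0$ for all $|\mathtt{x}| \geq R_0$; in particular $L \geq -\Delta + c_0$ outside $B_{R_0}$, which is a positive operator there.

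The key steps, in order, are as follows. First, prove the pointwise lower bound $F''(u) \geq c_0 > 0$ on $\mathbb{R}^2 \setminus B_{R_0}$ using the asymptotic description of ends recalled in Section~2 (the function $u_\lambda$ built from truncated heteroclinics, together with $u - u_\lambda \in W^{2,2}$, so that $u \to \pm 1$ uniformly away from the ends and $u$ is $C^2$-close to $H$ transversally to each end). Second, on the exterior region $\mathbb{R}^2 \setminus B_{R_0}$ the quadratic form associated to $L$ satisfies $\int |\nabla \varphi|^2 + F''(u)\varphi^2 \geq \int |\nabla\varphi|^2 + c_0 \varphi^2 > 0$ for every $\varphi \in W^{1,2}_0(\mathbb{R}^2 \setminus B_{R_0})$, so $L$ has \emph{no} negative eigenvalue in any bounded domain disjoint from $B_{R_0}$. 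Third, for an arbitrary bounded domain $\Omega \subset \mathbb{R}^2$, decompose $\Omega = (\Omega \cap B_{R_0+1}) \cup (\Omega \setminus \overline{B_{R_0}})$ with a smooth partition of unity subordinate to this cover; a standard localization inequality (the IMS/Persson-type splitting, $Q_L(\varphi) = Q_L(\chi_1\varphi)+Q_L(\chi_2\varphi) - \int (|\nabla\chi_1|^2+|\nabla\chi_2|^2)\varphi^2$) shows that the number of negative eigenvalues of $L$ in $\Omega$ is bounded by the number of negative eigenvalues of $L + C\mathbf{1}_{B_{R_0+1}}$ type operators supported in the fixed ball $B_{R_0+1}$, plus the contribution from the exterior piece, which by Step~2 is zero. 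Fourth, conclude that the index in $\Omega$ is bounded above by a constant depending only on $R_0$, $\sup_{B_{R_0+1}}|F''(u)|$ and the geometry of the cutoffs — quantities independent of $\Omega$ — which is precisely the definition of finite Morse index.

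The main obstacle I expect is making Step~1 genuinely uniform: one must control $F''(u)$ not only along each individual end but in the transition regions between ends and, crucially, uniformly as $|\mathtt{x}| \to \infty$, so that a \emph{single} radius $R_0$ works. This requires using the precise structure built into $\mathcal{S}_4$ (and its evident $2k$-ended analogue): the ends $\lambda_j^+$ are pairwise at distance bounded below, they leave the ball $B_R$, and the correction $u - u_\lambda$ is globally in $W^{2,2}(\mathbb{R}^2)$ hence tends to $0$ (in a suitable sense, e.g. via elliptic estimates upgrading $W^{2,2}$ decay to $C^1_{loc}$ decay at infinity) away from the ends, while on each end $u$ is modelled on $H(\mathrm{dist}^s(\cdot,\lambda_j))$ which satisfies $F''(H) = H'' / H' \cdot$ (or directly $F''(H(t)) \to F''(\pm1)>0$ exponentially as $t\to\pm\infty$). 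Once this uniform positivity at infinity is secured, Steps~2--4 are the routine localization argument of Fischer-Colbrie--Schoen and Berestycki--Caffarelli--Nirenberg (\cite{MR808112}), and no new idea is needed. Note that $k$ enters only through the number of ends, hence only through the size of the compact set $B_{R_0}$; the argument is otherwise identical for every $k$, which is why the statement holds for all $2k$-ended solutions simultaneously.
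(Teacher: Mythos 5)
There is a genuine gap in Step~1, and it is fatal to the argument as written. The asserted lower bound $F''(u)\geq c_0>0$ on $\mathbb R^2\setminus B_{R_0}$ is false. The nodal set of a $2k$-ended solution is unbounded: it is asymptotic to the half-lines $\lambda_j^+$, which escape every ball. On this set $u=0$, so $F''(u)=F''(0)$ there, and for the class of potentials considered in the paper $F$ has a local maximum at the origin (it is even, positive on $(-1,1)$, vanishes at $\pm1$, with $F'<0$ on $(0,1)$), hence $F''(0)\leq 0$; for the standard double well one even has $F''(0)<0$. More generally, along each end $u$ is modelled on $H(\mathrm{dist}^{s}(\cdot,\lambda_j))$, and $H$ takes every value in $(-1,1)$, so $F''(u)$ sweeps through $F''(0)$ in a full tubular neighborhood of each end, all the way to infinity. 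Thus $F''(u)$ is not bounded below by a positive constant outside any compact set, Step~2 (positivity of $-\Delta+F''(u)$ on exterior domains from $L\geq-\Delta+c_0$) collapses, and the IMS localization in Steps~3--4, while a perfectly sound general mechanism, has nothing to feed it.

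The correct replacement for the (false) positivity of $F''(u)$ is precisely Proposition~\ref{pr:3.1}: one constructs a strictly positive function $\Psi$ with $(\Delta-F''(u))\Psi\leq 0$ on $\mathbb R^2\setminus B_{R_0}$ by transplanting $H'$ along each end and inserting a small damping factor $1-e^{-\mu t}$ in the direction tangent to the end. This works because the one-dimensional operator $-\partial_s^2+F''(H)$ annihilates the positive function $H'$ (so the linearization around the heteroclinic is nonnegative in the transverse variable even though $F''(H)$ changes sign), and because $u$ converges exponentially fast to the transplanted heteroclinic along each end (Theorem~2.1 of \cite{dkp-2009}) while $F''(u)\to F''(\pm1)>0$ uniformly away from the ends. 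The positive exterior supersolution $\Psi$ then certifies, via a Barta/Allegretto--Piepenbrink type inequality, that the quadratic form of $L$ is nonnegative on $W^{1,2}_0$ of the exterior region; your Steps~3--4 would then go through and yield finite index, essentially the route of \cite{Bap-Dev}. The paper's own proof uses $\Psi$ differently: as a maximum-principle barrier it yields a uniform bound $\|\phi\|_{L^\infty(B_R\setminus B_{R_0})}\leq C\|\phi\|_{L^\infty(\partial B_{R_0})}$ for any Dirichlet eigenfunction $\phi$ of $L$ on $B_R$ with negative eigenvalue, independent of $R$; together with interior elliptic estimates this gives $\|\phi\|_{L^\infty(B_{2R_0})}\leq C\|\phi\|_{L^2(B_{2R_0})}$, and a Bergman-kernel argument \`a la Donnelly \cite{Don} then bounds the dimension of the span of such eigenfunctions by $C^2\,\mathrm{Vol}(B_{2R_0})$, uniformly in $R$. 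Either exterior-control mechanism leads to the theorem, but both require Proposition~\ref{pr:3.1}, which your proposal omits and tacitly replaces by an incorrect pointwise bound.
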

We will only prove this result for $4$-ended solutions but the proof extends {\it verbatim} to any $2k$-ended solution. 

\medskip

Since the Morse index of a $2k$-ended solution $u$ is finite (equal to $m$), we know from \cite{MR808112}, that there exists a finite dimensional subspace $E \subset L^{2}(\mathbb{R}^{2})$, with $\mathrm{dim}\,E =m$, which is spanned by the eigenfunctions $\phi_{1}, \ldots,\phi_{m}$ of the operator $L$, corresponding to the negative eigenvalues $\mu_{1}, \ldots,\mu_{m}$ of $L$.

\medskip

We now sketch the plan of our paper.

\medskip

In section 3, we prove that {\em any} element of ${\mathcal{M}}_{4}^{even}$ is even-nondegenerate (we also prove that it is nondegenerate, even though we do not need this result). The proof follows the line of the proof in \cite{kow-liu} where it is proven that the saddle solution is nondegenerate. This will prove Theorem~\ref{th:1} and, thanks to this result, it will then follow from the Implicit Function Theorem (see Section 8 and Theorem 2.2 in \cite{dkp-2009}) that any connected component of $\mathcal M_4^{even}$ is $1$-dimensional.

\medskip

In section 4, we recall two key tools which will be needed in the analysis of the properness of the classifying map $\mathcal P$. The first is a well known {\it a priori} estimate for solutions of (\ref{AC}) which states that, away from its zero set, the solutions of (\ref{AC}) tend to $\pm 1$ exponentially fast. The second tool is a {\em balancing formula} which holds for any solution of (\ref{AC}). This balancing formula reflects the invariance of our problem under translations and rotations and can be understood as a consequence of Noether's Theorem. 

\medskip

In section 5, we prove the properness of the classifying map $\mathcal P$. Assume that $(u_n)_{n\geq 0}$ is a sequence of solutions of ${\mathcal{M}}^{even}_{4}$ such that $\mathcal P (u_n)$ remains bounded away from $-\pi/4$ and from $\pi/4$, further assume that $(u_n)_{n\geq 0}$ converges on compacts to $u$ (thanks to elliptic estimates, this can always be achieved up to the extraction of a subsequence). We will show that $u \in \mathcal M_4^{even}$ and also that 
\[
\mathcal P (u) =\lim_{n\to \infty} \mathcal P (u_n).
\]
The key tool in the proof is the use of the balancing formula introduced in the previous section which allows one to control the nodal sets of $u_n$ as $n$ tends to infinity. As we will see, this compactness result implies that the image by $\mathcal P$ of the connected component of $\mathcal M_4^{even}$ which contains the saddle solution, is the entire interval $(-\pi/4,\pi/4)$. 

\medskip

Section 6 is devoted to the proof of the non existence of compact components in ${\mathcal{M}}_{4}^{even}$. We will show that a connected component in ${\mathcal{M}}_{4}^{even}$ cannot be compact (i.e. cannot be diffeomorphic to $S^1$). As a consequence, this will imply that the image of {\em any} connected components of $\mathcal M_4^{even}$ by $\mathcal P$ is the entire interval $(-\pi/4, \pi/4)$.

\medskip

In section 7, we prove that any $2k$-ended solution of (\ref{AC}) has finite Morse index. The proof relies on an intermediate result used in section 4, in the proof of the nondegeneracy of the $4$-ended solutions of (\ref{AC}).

\medskip

Our  results are very much inspired from a similar classification result which was obtained in a very different framework~: the theory of minimal surfaces. Let us briefly explain the analogy between our result and the corresponding result in the theory minimal surfaces in $\mathbb{R}^{3}$. 

\medskip

In 1834, H.F. Scherk discovered an example of a singly-periodic, embedded, minimal surface in $\mathbb{R}^{3}$ which, in a complement of a vertical cylinder, is asymptotic to $4$ half planes with angle $\pi/2$ between them (these planes are called ends). This surface, after an appropriate rigid motion and scaling, has two planes of symmetry, say the $x_2=0$ plane and the $x_{1}=0$ plane, and it is periodic, with period $2\pi$ in the $x_{3}$ direction. If $\theta \in (0, \pi/2)$ denotes the angle between the asymptotic ends of the Scherk's surface contained in $\{(x_1, x_2, x_3) \in \mathbb R^3 \, : \, x_{1}>0, \quad x_{2}>0\}$ and the $x_{2}=0$ plane, then for the original Scherk surface corresponds to $\theta = \pi/4$. This surface is the so called {\em Scherk's second surface} and it will denoted here by $S_{\pi/4}$. 

\medskip

In 1988, H. Karcher \cite{MR958255} found a one parameter family of Scherk's type surfaces with $4$-ends including the original example. These minimal surfaces are parameterized by the angle $\theta\in(0, \pi/2)$ between one of their asymptotic planes and the $x_{2}=0$ plane. The one parameter family $(S_{\theta} )_{\theta \in (0, \pi/2)}$ of these
surfaces, normalized in such a way that the period in the $x_{3}$ direction is $2\pi$, is the family of Scherk singly periodic minimal surfaces. 

\medskip

We note that the $4$-ended elements of Scherk family are given explicitly in terms of the Weierstrass representation, or alternatively they can be represented implicitly as the solutions of
\begin{align*}
\cos^{2}\theta \, \cosh\left( \frac{x_{1}}{\cos \theta}\right) - \sin^{2}\theta \, \cosh\left( \frac{x_{2}}{\cos\theta}\right) = \cos x_{3}.
\end{align*}
More generally, Scherk's surfaces with $2k$-ends have also been constructed by H. Karcher \cite{MR958255}. They have been classified by J. Perez and M. Traizet in \cite{Per-Tra}. In some sense our result can be understood as an analog of the classification result of J. Perez and M. Traizet for $4$-ended Scherk's surfaces. 

\section{The nondegeneracy of $4$-ended solutions}
\label{sec no degeneracy}

In this section, we prove that any  $u \in \mathcal M^{even}_4$ is even-nondegenerate. The proof follows essentially the proof of the nondegeneracy of the saddle solution in \cite{kow-liu} and subsequently this idea was used by X. Cabr\'e in \cite{Cab}. The main result is the~:
\begin{theorem}
Assume that $u \in \mathcal M_4^{even}$ and $\delta >0$. Further assume that $\varphi \in e^{-\delta (1+|{\tt x}|^2)}  \, W^{2,2} (  \mathbb{R}^{2} )$ is a solution of 
\[
\left( \Delta - F^{\prime\prime}\left(  u\right) \right) \varphi=0,
\]
in $\mathbb R^2$ which is symmetric with respect to both the $x$-axis and the $y$-axis, then $\varphi \equiv 0$.
\label{th:ng}
\end{theorem}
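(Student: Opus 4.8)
My plan is to follow the scheme used by Kowalczyk and Liu in \cite{kow-liu} to prove the nondegeneracy of the saddle solution, adapting it so that it uses only the even symmetry of $u$ and the monotonicity built into Definition~\ref{de:002}, rather than the extra diagonal symmetry which is special to the saddle solution. Since $\varphi$ is even in $x$ and $y$, I would restrict the whole discussion to the closed first quadrant $\overline{Q^{\llcorner}}$, where the evenness of $\varphi$ becomes the homogeneous Neumann conditions $\partial_x\varphi=0$ on $\{x=0\}$ and $\partial_y\varphi=0$ on $\{y=0\}$. The standard way to exploit the linearized equation is to divide $\varphi$ by a positive solution $z$ of $(\Delta-F''(u))\,z=0$: then $\psi:=\varphi/z$ satisfies $\mathrm{div}(z^2\nabla\psi)=0$, and an integration by parts over $Q^{\llcorner}\cap B_R$ — whose boundary terms one controls using the Gaussian decay of $\varphi$ at infinity and appropriate vanishing/parity on the axes — forces $\psi$ to be constant, hence $\varphi=c\,z$, hence $\varphi\equiv0$ because $z\notin W^{2,2}(\mathbb R^2)$. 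For $\varphi$ odd in at least one variable (the case needed only for the full nondegeneracy statement) this works directly with $z=\partial_x u$, $z=\partial_y u$ or $z=x\,\partial_y u-y\,\partial_x u$, since after a sign these Jacobi fields are positive on $Q^{\llcorner}$ and carry, respectively, the parities $(\mathrm{odd},\mathrm{even})$, $(\mathrm{even},\mathrm{odd})$, $(\mathrm{odd},\mathrm{odd})$. The difficulty for Theorem~\ref{th:ng} is precisely that none of these has the parity $(\mathrm{even},\mathrm{even})$, so there is no symmetry‑generated positive Jacobi field to divide by.

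The remedy, as in \cite{kow-liu}, is to divide instead by $z:=|\nabla u|$. This function is even in both variables; it obeys the homogeneous Neumann conditions on both axes (because $\partial_x u$ and $\partial_{xy}u$ vanish on $\{x=0\}$, and symmetrically on $\{y=0\}$); by the strict monotonicity of $u$ on $Q^{\llcorner}$ together with a Hopf‑lemma argument it is strictly positive on $\overline{Q^{\llcorner}}$ except at the origin, which is the only critical point of $u$; and, crucially, it satisfies the differential inequality
\[
\big(\Delta-F''(u)\big)\,|\nabla u|=\frac{1}{|\nabla u|}\left(|D^2u|^2-\frac{|D^2u\,\nabla u|^2}{|\nabla u|^2}\right)\ge0
\]
by Cauchy--Schwarz. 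Writing $\varphi=\psi\,|\nabla u|$ and running the Picone/integration‑by‑parts identity, I would combine (i) $\int_{\mathbb R^2}\big(|\nabla\varphi|^2+F''(u)\varphi^2\big)=\int\varphi\,(\Delta-F''(u))\varphi=0$, the boundary term at infinity being killed by the Gaussian decay; (ii) the non‑negative bulk term coming from the inequality above; and (iii) a careful analysis of the contribution of a small disc $B_\varepsilon$ around the origin, where $\psi$ is unbounded. I expect that balancing these three ingredients as $\varepsilon\to0$ forces $\varphi$ to vanish on one of the coordinate axes; once, say, $\varphi\equiv0$ on $\{y=0\}$, the quotient $\varphi/\partial_y u$ becomes bounded near $\{y=0\}$ and the argument of the first paragraph applies, now with $z=\partial_y u$ on the half‑plane $\{y>0\}$, to conclude $\varphi\equiv0$.

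\textbf{The main obstacle} is exactly step (iii): because $u$ has an interior critical point, the auxiliary function $|\nabla u|$ vanishes there, the quotient $\psi$ blows up, and the integration by parts produces a priori divergent boundary contributions near the origin. Making the scheme of \cite{kow-liu} go through therefore requires a precise local analysis near that critical point — including the (possibly delicate) question of the nondegeneracy of $D^2u$ at the origin — in order to extract, from an otherwise merely formal identity, the genuine information that $\varphi$ vanishes on an axis. Everything else (the reduction to the quadrant, the boundary behaviour on the axes, and the decay at infinity) is routine once this point is handled.
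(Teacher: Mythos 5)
Your route is not the paper's, and the step you yourself flag as ``the main obstacle'' is a genuine gap that the paper \emph{avoids} rather than resolves. The paper never divides $\varphi$ by $|\nabla u|$. Instead, Proposition~\ref{pr:3.1} builds a \emph{positive supersolution} $\Psi$ of $L=-\Delta+F''(u)$ on $\mathbb{R}^2-B_{R_0}$ by transplanting the one-dimensional profile $H'$ (multiplied by $1-e^{-\mu t}$) along each of the four ends and gluing with a cut-off; since $H'>0$, this $\Psi$ is strictly positive and has no interior zeros. With $\Psi$ in hand, Theorem~\ref{th:ng} is proved in three steps: (i) the maximum principle with $\Psi$ rules out bounded nodal domains of $\varphi$ lying in $\mathbb{R}^2-B(0,R_0)$; (ii) an unbounded nodal domain $\Omega$ of $\varphi$ contained in a half-plane is excluded by the Ghoussoub--Gui device you describe, applied with $z=\partial_y u$ (or $\partial_x u$) \emph{on $\Omega$}, where evenness of $\varphi$ costs nothing because $\varphi$ vanishes on $\partial\Omega\cap\{y>0\}$ and $\partial_y u$ vanishes on $\partial\Omega\cap\{y=0\}$; (iii) consequently $\varphi$ has a fixed sign outside a compact, and the scalar projection $g(t)=\int_{\mathbb{R}}\tilde\varphi(s,t)H'(s)\,ds$ along an end is shown, via the Refined Asymptotics Theorem, to obey $|g'(t)|\le C e^{-\beta t}\int_t^{\infty}g(z)\,dz$, which forces $g\equiv0$ and then $\varphi\equiv0$ by unique continuation. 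The projection step (iii) is the ingredient your scheme has no analogue of, and it is the one that handles the ``(even,even)'' parity without any auxiliary function vanishing inside.

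By contrast, dividing by $z=|\nabla u|$ runs into two intertwined problems that are not merely technical. First, the inequality you wrote, $(\Delta-F''(u))|\nabla u|\ge 0$, says $Lz\le 0$: $|\nabla u|$ is a \emph{sub}solution of $L$. In the Sternberg--Zumbrun identity $0=Q(\varphi)=\int z^2|\nabla\psi|^2+\int\psi^2\,z\,Lz+(\text{boundary})$, one needs $Lz\ge 0$ to conclude $\nabla\psi\equiv0$; with $Lz\le0$ the bulk term has the opposite sign and the identity yields no rigidity. Second, near the interior critical point the computation genuinely does not close: for the saddle solution $u\approx a\,xy$ near the origin, so $z\approx ar$, and if $\varphi(0)\neq0$ then $\psi\approx\varphi(0)/(ar)$; the terms $\int z^2|\nabla\psi|^2$ and $-\int\psi^2 zLz$ both diverge like $\varphi(0)^2\log(1/\varepsilon)$ over $B_\delta-B_\varepsilon$, and the inner boundary contribution on $\partial B_\varepsilon$ stays of order $\varphi(0)^2$. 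After the logarithms cancel you are left with a finite relation constraining $\varphi(0)$, not with $\varphi\equiv0$. So the missing idea is precisely Proposition~\ref{pr:3.1} together with the $H'$-projection of step (iii), which together replace the role you are asking $|\nabla u|$ to play.
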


As in \cite{kow-liu}, the  proof of this Proposition relies on the construction of a supersolution for the operator $L$, away from a compact. To explain the main idea of the proof, let us digress slightly and consider the heteroclinic solution $(x,y) \mapsto H(x)$ and define 
\[
L_0: =  -\Delta + F''(H'),
\] 
the linearized operator about the heteroclinic solution. Clearly, the function 
\[
\Psi_0(x,y): =  H'(x),
\] 
is positive and is a solution of $L_0 \, \Psi_0 =0$. Since any $u \in \mathcal M^{even}_4$ is asymptotic to a heteroclinic solution, we can transplant $H'$ along the ends of $u$ to build a positive supersolution for $L : = -\Delta + F'(u)$. More precisely, we have the~:
\begin{proposition} 
\label{pr:3.1}
Under the above assumptions, there exist $R_0 > 0$ and a function $\Psi >0$ defined in $\mathbb R^2$ such that 
\[
\left( \Delta -F^{\prime\prime}\left(  u\right) \right) \, \Psi \leq 0,
\]
in $\mathbb R^2 - B(0, R_0)$. 
\end{proposition}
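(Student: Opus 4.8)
The plan is to build $\Psi$ by transplanting the model supersolution $\Psi_0(x,y) = H'(x)$ along the four ends of $u$, glued to a large positive constant on the compact region where the ends are not yet well separated. Concretely, fix $R > 0$ large (in particular large enough that the decomposition $\mathbb{R}^2 = \Omega_0 \cup \cdots \cup \Omega_4$ from Section~2 applies to the ends $\lambda_1, \ldots, \lambda_4$ of $u$, and large enough that the a priori decay estimates for $u$ — stated in Section~4 — hold on $\mathbb{R}^2 - B_R$). On each end region $\Omega_j'$ define $\Psi$ to be $H'(\mathrm{dist}^s(\cdot,\lambda_j))$, i.e.\ the transplanted kink derivative along the $j$-th affine line; on the remaining set, and in particular on the overlaps and on $B_{R+1}$, interpolate using the partition of unity $\mathbb{I}_0, \ldots, \mathbb{I}_4$ so that $\Psi$ is globally smooth and strictly positive, of the form
\[
\Psi := A\,\mathbb{I}_0 + \sum_{j=1}^4 \mathbb{I}_j \, H'\big(\mathrm{dist}^s(\cdot,\lambda_j)\big),
\]
with the constant $A>0$ to be chosen. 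Positivity is immediate since $H' > 0$ everywhere.

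The key step is the pointwise computation of $L\Psi = (-\Delta + F''(u))\Psi$ on $\mathbb{R}^2 - B_{R_0}$ for $R_0$ suitably chosen (a bit larger than $R$). Away from the overlap zones, in $\Omega_j'$ for $j \geq 1$, one has $\Psi = H'(\mathrm{dist}^s(\cdot,\lambda_j))$ and
\[
L\Psi = \big(-\partial_{tt} + F''(u)\big) H'(t)\big|_{t = \mathrm{dist}^s(\cdot,\lambda_j)}
= \big(F''(u) - F''(H(t))\big) H'(t),
\]
using that $H''' = F''(H) H'$. Here the a priori estimate from Section~4 gives $|u - (-1)^j H(\mathrm{dist}^s(\cdot,\lambda_j))| \leq C e^{-c\, \mathrm{dist}(\cdot, \text{zero set})}$ away from the nodal set, hence $F''(u) - F''(H) = O(e^{-c\,|t|})$ uniformly, while the model gap $F''(H(t)) - F''(\pm 1) = O(e^{-c|t|})$ and $F''(\pm 1) > 0$; since $H'(t)$ itself decays like $e^{-c|t|}$, we need to be slightly careful, but the point is that $F''(u)$ on $\Omega_j'$ is close to $F''(\pm 1) > 0$ up to exponentially small error, so $L\Psi$ has the sign of $F''(\pm 1)H'(t) > 0$ — wait, this gives the wrong sign.

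The fix — and this is the real content, exactly as in \cite{kow-liu} — is that one does not use $\Psi = H'$ but rather a slight modification, e.g.\ $\Psi = H'(t) + \epsilon\, e^{-\sigma|t|}$ or a linear combination of $H'$ with a cutoff of the decaying exponential solution $e^{\mp\sqrt{F''(\pm1)}\,t}$ tuned so that the positive contribution $F''(\pm1)$ times the correction term dominates. Alternatively, and more robustly, one uses that on the ends the transverse operator has $H'$ as a zero-energy state with the rest of the spectrum positive, and the transplanted $\Psi$ acquires a favorable sign from the curvature/separation of the ends together with the exponentially small perturbation $F''(u) - F''(H)$; choosing $R_0$ large makes all error terms smaller than the main positive term. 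In the transition regions $\Omega_j \cap \Omega_i$ and in the neighborhood of $\partial B_R$, one controls $L\Psi$ by brute force: $\Delta \mathbb{I}_j$ and $\nabla \mathbb{I}_j$ are bounded by $C$, multiplied against $H'$ and $H''$ evaluated at signed distances that are comparable (so all such terms are $O(e^{-cR})$), and the constant piece $A\,\mathbb{I}_0$ contributes $A(-\Delta\mathbb{I}_0 + F''(u)\mathbb{I}_0)$, which — choosing $A$ large and using that on $\Omega_0' = B_{R-1}$ we have $\mathbb{I}_0 \equiv 1$ so $\Delta\mathbb{I}_0 = 0$ there, while on the annular transition $R-1 \leq |{\tt x}| \leq R+1$ the value of $u$ has a controlled sign structure — can be arranged. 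Assembling these estimates, $L\Psi \leq 0$ on $\mathbb{R}^2 - B_{R_0}$.

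I expect the main obstacle to be precisely the sign bookkeeping in the end regions: the naive choice $\Psi = H'$ formally solves $L_0\Psi = 0$ for the model, but the perturbation $F''(u) - F''(H)$ need not have a favorable sign, so one must either add an exponentially decaying correction (whose Laplacian supplies a definite positive multiple of itself, dominating the perturbation once $R_0$ is large) or exploit monotonicity of $u$ from the Gui symmetry theorem. The second subtlety is the matching across the supports of the $\mathbb{I}_j$, where $\Psi$ is a genuine sum of several transplanted profiles; there one must check that the "cross terms" $F''(u)$ acting on $\mathbb{I}_j H'(\mathrm{dist}^s(\cdot,\lambda_j))$ for ${\tt x}$ near a \emph{different} end are exponentially small, which follows from property (iii) in Section~2 that the ends are at mutual distance $> 4$. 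Once Proposition~\ref{pr:3.1} is in hand, Theorem~\ref{th:ng} follows by a standard argument: if $\varphi$ is an even, exponentially decaying kernel element, compare $\varphi$ with $t\Psi$ on $\mathbb{R}^2 - B_{R_0}$ via the maximum principle (the ratio $\varphi/\Psi$ cannot have an interior positive maximum off the compact set), then combine with the evenness and the nondegeneracy/monotonicity structure inside $B_{R_0}$ to conclude $\varphi \equiv 0$.
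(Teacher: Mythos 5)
Your plan correctly identifies the crux of the matter: the naive transplant $\Psi = H'(\mathrm{dist}^s(\cdot,\lambda_j))$ gives $(\Delta-F''(u))\Psi = (F''(H)-F''(u))\,H'$, which is exponentially small near the end but whose sign is not controlled. However, the correction you propose does not repair this. You perturb in the \emph{transverse} coordinate, e.g.\ $\Psi = H'(t) + \epsilon\,e^{-\sigma|t|}$ with $t = \mathrm{dist}^s(\cdot,\lambda_j)$, and compute $(\Delta - F''(u))(\epsilon\,e^{-\sigma|t|}) = \epsilon(\sigma^2 - F''(u))\,e^{-\sigma|t|}$ (for $t\ne 0$). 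But on the nodal set of $u$, where $|t|$ is small and $u\approx 0$, one has $F''(u)\approx F''(0) < 0$ (double-well), so $\sigma^2 - F''(u) > 0$: this correction contributes a \emph{positive} quantity to $(\Delta-F''(u))\Psi$, which is the wrong sign for a supersolution of this form. Worse, this contribution is $O(\epsilon)$, uniformly as you go out along the end; it does not decay, so taking $R_0$ large does not save it. The same objection applies to the variant $e^{\mp\sqrt{F''(\pm 1)}\,t}$, and the remark about the transverse operator having $H'$ as a zero mode does not, by itself, produce a sign.

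The correction actually used in the paper is in the \emph{longitudinal} coordinate. Writing a neighborhood of the end in coordinates $(s,\ell)$ with $s$ transverse (signed distance to $\lambda_j$) and $\ell$ longitudinal (distance along $\lambda_j$ from the origin), one takes $\Psi = (1 - e^{-\mu\ell})\,H'(s)$. Then $(\Delta - F''(H(s)))\Psi = -\mu^2\,e^{-\mu\ell}\,H'(s) < 0$ exactly, and the remaining error $(F''(H(s)) - F''(u))\Psi$ is controlled by the Refined Asymptotics Theorem (Theorem~2.1 of \cite{dkp-2009}), which says $u - (\pm)H(\mathrm{dist}^s(\cdot,\lambda_j)) = O(e^{-\beta\ell})$ uniformly in $s$ for some $\beta>0$. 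Choosing $\mu < \beta$, the favorable term $-\mu^2 e^{-\mu\ell}H'(s)$ decays strictly more slowly than the error and hence dominates it once $\ell$ is large. Away from tubular neighborhoods of the ends one uses, as you rightly anticipate, that $F''(u)\to F''(\pm 1) > 0$. This is the missing idea in your proposal: the definite-sign correction must decay slowly in the direction in which the uncontrolled perturbation decays fast, namely along the end, not across it.
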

\begin{proof}
The proof follows from a direct construction of the function $\Psi$. In the upper right quadrant $Q^{\llcorner}$, the zero set of $u$ is asymptotic to the half of an oriented affine line 
\[
\lambda = r \, {\tt e}^\perp + \mathbb R \, {\tt e},
\]
with ${\tt e } : =  (\cos \theta, \sin \theta)$. Without loss of generality, we can assume that $\theta \in [\pi/4, \pi/2)$ since, if this is not the case, we just compose $-u$ with a rotation by $\pi/2$. The intersection of $\lambda$ with the closure of the upper right quadrant, $Q^{\llcorner}$ will be denoted by 
\[
\bar \lambda_1^+ : = \overline{Q^{\llcorner}} \cap \lambda, 
\]
and its image by the symmetry with respect to the $y$-axis will be denoted by $\bar \lambda_2^+$, while its image by the symmetry with respect to the $x$-axis will be denoted by $\bar \lambda_4^+$. Finally, the image of $\bar \lambda_2^+$ by the symmetry with respect to the $x$-axis is equal to the the image of $\bar \lambda_4^+$ by the symmetry with respect to the $y$-axis and will be denoted by $\bar \lambda_3^+$. So, at infinity, the zero set of $u$ is asymptotic to $\bar \lambda^+_1, \ldots , \bar \lambda_4^+$. We denote by $q_j^+$ the end points of $\bar \lambda_j^+$, namely 
\[
\{q_j^+\} :=  \partial  \bar \lambda_j^+ .
\] 
Observe that $\{q_j^+ \, : \, j=1, \ldots, 4\}$ contains at most $2$ points and we denote by $\Gamma$ the line segment joining these two points. Also observe that $\mathbb R^2 - (\bar \lambda_2^+ \cup \Gamma \cup \bar \lambda_4^+)$ has two connected components, one of which contains $\bar \lambda_1^+$ and will be denoted by $U_1$ while the other, which contains $\bar \lambda_3^+$, will be denoted by $U_3$.

\medskip

The crucial observation is the following : If 
\[
f (x,y) : = (1- e^{-\mu y} ) \, H'(x),
\]
then, using the fact that $ (\Delta - F''(H))  \, H'=0$, we get 
\[
(\Delta - F''(H)) \, f = - \mu^2 \, e^{-\mu y} \, H' < 0 .
\]

We define the function $h$ by
\[
h ((r+s) \, {\tt e}^\perp + t \, {\tt e}) =  (1- e^{-\mu t} ) \, H'(s) .
\]
Observe that the function $h$ is defined in all $\mathbb R^2$. Nevertheless, since we are only interested in this function in  $U_1$, we define a smooth cutoff function $\chi$ which is identically equal to $1$ in $U_1$ at distance $1$ from  $\partial U_1$ and which is identically equal to $0$ in $U_3 : = \mathbb R^2 - \overline{U_1}$. As usual, we assume that $| \nabla \chi |\leq C$, for some $C >0$,  as we are entitled to do.

\medskip

Using $\chi$ and $h$, we build our supersolution in such a way that it is invariant under the symmetry with respect to the $x$-axis and under the symmetry with respect to the $y$-axis. We define
\[
\begin{array}{rllll}
\Psi (x,y) & : = &  \chi (x,y) \, h (x,y) +  \chi (-x,-y) \, h (-x,-y)  \\[3mm]
                &  + & \chi (x,-y) \, h (x,-y) +  \chi (-x, y) \, h (-x, y).
\end{array}
\]

We know from the Refined Asymptotics Theorem (Theorem 2.1 in \cite{dkp-2009}) that, as $t$ tends to infinity, $(r,s) \mapsto u ((r+s) \, {\tt e}^\perp + t \, {\tt e})$ converges exponentially fast to $(s,t) \mapsto H(s)$ uniformly in $s \in [-\rho, \rho]$. Using this property, we see that we can chose $\mu >0$ close enough to $0$ such that
\begin{equation}
L \Psi  < - \frac{\mu^2}{2} \, e^{-\mu y} \, H'
\label{es1}
\end{equation}
in a tubular neighborhood of width $\rho$ around $\partial U_1$ and away from a ball of radius $R_0$ large enough, centered at the origin. Observe that the choice of $\mu$ only depends on the decay of $(s,t) \mapsto u((r+s) \, {\tt e}^\perp + t \, {\tt e})$ towards $(s,t) \mapsto H(s)$ and does not depend on the choice of $\rho$. However, increasing $\rho$ affects the minimal value of $R_0$ for which (\ref{es1}) holds.  

\medskip

Now, we choose $\rho >0$ and $R_0>0$ large enough, so that 
\[
L \Psi < - \frac{\mu^2}{2} \, e^{-\mu y} \, H', 
\]
away from $B_{R_0}$ and away from a tubular neighborhood of width $\rho$ around $\bar \lambda_1\cup \ldots \cup \bar  \lambda^+_4$. Here, we simply use the fact that $F''(u)$ converges uniformly to $F''(\pm 1)$ away from the nodal set of $u$.  This completes the proof of result.
\end{proof}

\medskip

Observe that this construction is not specific to the case of $4$-ended solutions of (\ref{AC}) and in fact a similar construction would hold for any $2k$-ended solution. With this Lemma at hand, we can adapt the argument in \cite{kow-liu} where the nondegeneracy of the saddle solution is proven and we simply adapt it to the general case where $u$ is any $4$-ended solution.

\medskip

\begin{proof}[Proof of Theorem~\ref{th:ng}] Recall that, by definition, since $u \in \mathcal M_4^{even}$, we have
\[
\partial_y u >0 \qquad \mbox{when} \qquad y>0 ,
\]
and 
\[
\partial_x u < 0 \qquad \mbox{when} \qquad x >0 .
\]
Let $\varphi$ be the function as in the statement of Theorem~\ref{th:ng}. It follows from the Linear Decomposition Lemma (Lemma 4.2 in \cite{dkp-2009}) that $\varphi$ decays exponentially at infinity. More precisely, there exist constants $\alpha , C >0$ such that 
\[
| \varphi ({\tt x}) | \leq  C \, e^{-\alpha |{\tt x}|} ,
\]
for all ${\tt x} \in \mathbb R^2-B(0,1)$.  

\medskip

{\bf Step 1}. We assume that $\varphi$ is not identically equal to $0$ and define $\mathcal Z$ to be the zero set of  $\varphi$. Since $\varphi$ is assumed to be symmetric with respect to both the $x$-axis and the $y$-axis, so is the set $\mathcal Z$.  Clearly Proposition~\ref{pr:3.1} together with the maximum principle, implies that $\mathbb R^2 - \mathcal Z$ has no bounded connected component included in $\mathbb R^2 - B(0, R_0)$, since $\Psi$ can be used as a supersolution to get a contradiction.

\medskip

{\bf Step 2}.  We claim that any unbounded connected component of $\mathbb R^2 - \mathcal Z$ necessarily contains $\mathbb R^2 - B(0, R)$ for some $R$ large enough. Indeed, if this were not the case then, using the symmetries of $\varphi$, one could find $\Omega \subset \mathbb R^2$, an unbounded connected component of $\mathbb R^2 -\mathcal Z$, which is included in one of the four half spaces $\{ (x,y) \in \mathbb R^2 \, : \, \pm x >0\}$ or $\{ (x,y) \in \mathbb R^2 \, : \, \pm y >0\}$. For example, let us assume that 
\[
\Omega \subset \{ (x,y) \in \mathbb R^2 \, : \, y >0\}.
\] 
Following \cite{kow-liu}, we  adapt the proof of the de Giorgi conjecture in dimension $2$ by N. Ghoussoub and C. Gui to derive a contradiction. 

\medskip

We define $\psi : = \partial_y u$ which is a solution of $(\Delta - F''(u)) \, \psi =0$ in $\mathbb R^2$. Moreover, $\psi >0$ in $\{ (x,y) \in \mathbb R^2 \, : \, y >0\}$ and we check from direct computation that
\begin{equation}
\mbox{div} \, \left( \psi^2 \nabla h \right) = 0 ,
\label{eq:3.1}
\end{equation}
where $h : = \frac{ \varphi}{\psi}$. For all $R \geq 1$, we consider a cutoff function $\zeta_R$ which is identically equal to $1$ in $B(0,R)$, identically equal to $0$ outside $\mathbb R^2 - B(0,2R)$ and which satisfies $|\nabla \zeta_R| \leq C/R$ for some constant $C>0$ independent of $R\geq 1$.

\medskip

We multiply (\ref{eq:3.1}) by $\zeta^2_R \, \psi$ and integrate the result over $\Omega$. We find after an integration by parts
\[
\int_\Omega |\nabla h|^2 \, \psi^2 \, \zeta^2_R \, d{\tt x} + 2 \, \int_\Omega \psi^2 \, h \, \zeta_R \, \nabla h \, \nabla \zeta_R \, d{\tt x} =0 .
\]
Observe that, in the integration by parts, some care is needed when the boundary of $\Omega$ touches the $x$-axis but it is not hard to see that the integration by parts is also legitimate in this case (we refer to \cite{kow-liu} for details). Then, Cauchy-Schwarz inequality yields
\begin{equation}
\int_\Omega |\nabla h|^2 \psi^2 \zeta_R^2 d{\tt x} \leq 2 \left( \int_{\Omega \cap A_R} |\nabla h|^2 \psi^2  \zeta^2_R d{\tt x}\right)^{1/2} \left( \int_{\Omega \cap A_R} \varphi^2 |\nabla \zeta_R|^2 d{\tt x}\right)^{1/2} ,
\label{eq:45}
\end{equation}
where $A_R : =  B (0, 2R) - \overline B(0, R)$ contains the support of $\nabla \zeta_R$.  Hence, 
\[
\int_\Omega |\nabla h|^2 \, \psi^2 \, \zeta^2_R \, d{\tt x} \leq 4 \, \left( \sup_{A_R} | \varphi |^2 \right) \, \int_{\Omega \cap A_R}  |\nabla \zeta_R|^2 \, d{\tt x} .
\]
By construction of $\zeta_R$, the integral on the right hand side is bounded independently of $R$ and since $\varphi \in W^{2,2}(\mathbb R^2)$ we know that $\varphi$ tends to $0$ at infinity. Letting $R$ tend to infinity, we conclude that 
\[
\int_\Omega |\nabla h|^2 \, \psi^2 \, d{\tt x} =0 ,
\]
which then implies that $h\equiv 0$ in $\Omega$ and hence we also have $\varphi \equiv 0$  in this set. Finally, $\varphi \equiv 0$ in $\mathbb R^2$ by the unique continuation theorem. This is certainly a contradiction and the proof of the claim is complete. 

\medskip

{\bf Step 3}.  By the above, we know that $\varphi$ does not change sign away from a compact and, without loss of generality, we can assume that $\varphi >0$ in $\mathbb R^2 - B(0,R)$ for some $R>0$ large enough. As in \cite{kow-liu}, we proceed by analyzing the projection of $\varphi$ onto $H^{\prime}$ (composed with a suitable rotation).  

\medskip

The nodal set of $u$ in the upper right quadrant $Q^{\llcorner}$ is asymptotic to an oriented half line which is denoted by $\lambda$ and is given by 
\[
\lambda  = r\, {\tt e}^\perp + \mathbb R \, {\tt e} ,
\]
where ${\tt e} = (\cos \theta, \sin \theta)$. Up to a rotation by $\pi/2$ and a possible change of sign, we can assume that $\theta \in [\pi/4, \pi/2)$.  The image of $\lambda$ through the symmetry with respect to the $y$-axis will be denoted by $\bar \lambda$. Notice that, since $u$ is symmetric with respect to the $y$-axis, $\bar \lambda$ is also asymptotic to the zero set of  $u$.  

\medskip

Given the expression of $\lambda$, we define 
\[
\tilde u (s,t) : =  u  ( (r+s)\, {\tt e}^\perp + t \, {\tt e} ) \qquad \mbox{and} \qquad  \tilde \varphi (s,t) : =  \varphi ( (r+s)\, {\tt e}^\perp + t \, {\tt e}) .
\]
We consider the function $g$ defined by
\[
g (t) := \int_{\mathbb R} \tilde \varphi ( s,t ) \, H^{\prime} (s) \, ds .
\]
Since $\varphi >0$ away from a compact, we conclude that $g\geq 0$ for $t >0$ large enough. We have, using the equation satisfied by $\varphi$ 
\[
g'' (t) := \int_{\mathbb R} \partial_t^2 \tilde \varphi (s,t) \, H^{\prime} (s) \, ds =  - \int_{\mathbb R} (\partial_s^2  - F''(\tilde u (s,t))) \, \tilde \varphi (s,t) \, H^{\prime} (s) \, ds , 
\]
and an integration by parts yields
\[
g'' (t) := - \int_{\mathbb R} \tilde \varphi (s,t) \, \partial_s^2 H^{\prime} (s) \, ds  + \int_{\mathbb R}  F''(\tilde u(s,t))) \, \tilde \varphi (s,t) \, H^{\prime} (s) \, ds. 
\]
Finally, using the equation satisfied by $H'$, we conclude that
\[
g^{\prime\prime} (t) =\int_{\mathbb R} \left( F^{\prime\prime} (\tilde u(s,t)) -F^{\prime\prime }\left(  H (s)  \right)\right) \,  \tilde \varphi (s,t) \,  H^{\prime} (s) \, ds. 
\]
Observe that $\varphi$ tends exponentially to $0$ at infinity and hence so does $g$. Integrating the above equation from $t$ to $\infty$, we conclude that 
\[
g^{\prime} (t) = - \int_{t}^\infty \int_{\mathbb R} \left( F^{\prime\prime} (\tilde u(s,z)) -F^{\prime\prime
}\left(  H (s)  \right)\right) \,  \tilde \varphi (s,z) \,  H^{\prime} (s) \, ds \, dz. 
\]

Recall that $\bar \lambda$ is the image of $\lambda$ through the symmetry with respect to the $y$-axis and that we can parameterize $\bar \lambda$ by 
\[
\bar \lambda  =  \bar r \, \bar {\tt e}^\perp + \mathbb R \, \bar {\tt e},
\]
with obvious relations between ${\tt e}$ and $\bar {\tt e}$. We define
\[
\bar g (t) := \int_{\mathbb R} \varphi (  (\bar r+s)\, \bar {\tt e}^\perp + t \, \bar {\tt e} ) \, H^{\prime} (s) \, ds .
\]
Observe that, by symmetry of both $u$ and $\varphi$, we have 
\[
\bar g(t) = g(t) ,
\]
for all $t \geq 0$. 

\medskip

We claim that there exist constants $C >0$ and $\beta >0$ such that 
\begin{equation}
\left\vert g^{\prime}\left( t \right)  \right\vert \leq C\, e^{-\beta t} \,  \int_{t}^{+\infty} g\left( z \right)  \, dz. 
\label{gdouble}
\end{equation}
for all $t>0$ large enough. Assuming that we have already proven this inequality, it is then a simple exercise to check that the only solution to this differential inequality is identically equal to $0$. Hence 
\[
\int_{\mathbb R} \tilde \varphi (s, t) \, H^{\prime} (s) \, ds =0 ,
\]
for all $t>0$ large enough. Since the integrand is non negative this implies that $\tilde \varphi (s,t) \equiv 0$ for all $s\in \mathbb R$ and all $ t >0$ large enough. Therefore, $\varphi \equiv 0$ by the unique continuation theorem. This is again a contradiction and hence this completes the proof of the Theorem.

\medskip

It remains to prove (\ref{gdouble}). Observe that, in the definition of $g$, the domain of integration contains both a half of $\lambda$ and $\bar \lambda$ (this is where we use the fact that $\theta \in [\pi/4, \pi/2)$). We use the fact  that, thanks to  the Refined Asymptotics Theorem (Theorem 2.1 in \cite{dkp-2009}), $u$ is exponentially close to the sum of the heteroclinic solution $H'$ along $\lambda$ and also along $\bar \lambda$. Close to $\lambda$, we can therefore estimate 
\[
|F^{\prime\prime} (\tilde u(s,t)) -F^{\prime\prime}\left(  H (s)  \right)|\leq C \, e^{-\beta t} ,
\]
for some $\beta >0$. In fact this estimate holds at any point of $\{ (x,y)\in \mathbb R^2 \, : \, y > 0\}$ which is closer to $\lambda$ than to $\bar \lambda$. 

\medskip

We can write any point $(r+s) \, {\tt e}^\perp + t\, {\tt e}$ close to $\bar \lambda$ as  $(\bar r+\bar s) \, \bar{\tt e}^\perp + \bar t\, \bar {\tt e}$. Therefore, at any such point which is closer to $\bar \lambda$ than to $\lambda$, we simply use the fact that 
\[
| F^{\prime\prime} (u((r+s) \, {\tt e}^\perp + t\, {\tt e})) -F^{\prime\prime}\left(  H (s))  \right) \, H'(s) | \leq C \, e^{-\beta \bar t} \, H' (\bar s)  ,
\]
and we conclude that 
\[
\begin{array}{rllll}
\displaystyle \left| \int_{t}^\infty \int_{\mathbb R} \left( F^{\prime\prime} (\tilde u(s,z)) -F^{\prime\prime
}\left(  H (s)  \right)\right) \,  \tilde \varphi (s,z) \,  H^{\prime} (s) \, ds \, dz \right| \qquad \qquad\\[3mm]
\displaystyle \leq C \, e^{-\beta t} \,\int_{t}^\infty  \left( g(z) + \bar g(z)\right)\, dz. 
\end{array}
\]
Finally, the estimate (\ref{gdouble}) follows from the fact that $\bar g =g$. 
\end{proof}

\medskip

We now explain how to prove that any $u \in \mathcal M^{even}_4$ is non-degenerate. If $\phi \in e^{-\delta (1+|{\tt x}|^2)} \, W^{2,2}(\mathbb R^2)$ is a solution of $(\Delta - F'(u))\, \phi =0$, we can decompose $\phi = \phi_{e} + \phi_o$ into the sum of two functions, one of which $\phi_e$ being even under the action of the symmetry with respect to the $x$-axis and the other one $\phi_o$ being odd under the action of the same symmetry. Since $\phi_o$ vanishes on the $x$-axis, we can use the argument already used in Step 2 to prove that $\phi_o\equiv 0$ and hence $\phi$ is even under the action of the symmetry with respect to the $x$-axis. Using similar arguments one also prove that  $\phi$ is even under the action of the symmetry with respect to the $y$-axis and the non-degeneracy follows from Theorem~\ref{th:ng}. 

\medskip

Thanks to Theorem~\ref{th:ng}, we can apply the Implicit Function Theorem (see Section 8 and Theorem 2.2 in \cite{dkp-2009}) to show that any connected component of $\mathcal M_4$ is $4$-dimensional and, equivalently, we conclude that any connected component of $\mathcal M_4^{even}$ is $1$-dimensional  (the rational being that, the formal dimension of the moduli space of solutions of (\ref{AC}) is equal to the number of ends but, because of the symmetries, elements of $\mathcal M^{even}_4$ have only one end in the quotient space). Moreover, a consequence of this Implicit Function Theorem is that close to $u \in \mathcal M^{even}_4$, the space $\mathcal M^{even}_4$ can either be parameterized by the angle $\alpha$ or by the distance $r$ and this implies that the image of {\em any} connected component of $\mathcal M_4^{even}$ by the mapping $\mathcal F$ is an immersed curve. 

\section{Two useful tools}

\subsection{An {\it a priori} estimate}

It is well known that any solution $u$ of (\ref{AC}) which satisfies $\left\vert u\right\vert <1$ tends to $\pm 1$ exponentially fast away from its nodal set. In particular, we have the~:
\begin{lemma}
\label{le:4.1}
Given $\delta \in (0,1)$, there exists $\rho_\delta >0$ such that, for any solution of (\ref{AC}) which satisfies  $\left\vert u\right\vert <1$, we have
\begin{equation}
B(\mathtt{x}, 2 \rho_\delta) \subset \mathbb R^2 - \mathcal Z (u) \quad \Rightarrow  \quad |u^2 - 1| \leq \delta \quad \mbox{in} \quad B(\mathtt{x}, \rho_\delta) , 
\label{eq:est-2}
\end{equation}
where
$$
\mathcal Z (u) :=\{ {\tt x} \in \mathbb R^2 \, : \, u({\tt x}) =0\}, 
$$ 
denotes the nodal set of the function $u$.
\end{lemma}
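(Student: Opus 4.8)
The statement is a standard consequence of the maximum principle combined with the fact that, near $t=\pm1$, the potential $F$ is strictly convex, so $F'$ has the same sign as $u^2-1$ and the linearized problem is coercive. The plan is to argue by contradiction and a compactness/barrier argument. First I would note that, since $|u|<1$ everywhere and $F'(t)\neq 0$ for $t\in(0,1)$ while $F$ is even, the sign of $F'(u)$ is constant on each component of $\mathbb R^2\setminus\mathcal Z(u)$: on the set where $u>0$ we have $F'(u)>0$, and where $u<0$ we have $F'(u)<0$. Hence on a ball $B(\mathtt x,2\rho)$ disjoint from the nodal set, $u$ solves $\Delta u=F'(u)$ with $F'(u)$ of one sign; without loss of generality assume $u>0$ there.

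The core estimate is then the construction of a one–sided barrier. Using $F''(1)>0$, pick $c_0>0$ and $\eta_0\in(0,1)$ so that $F'(t)\geq c_0(t-1)$ for $t\in(1-\eta_0,1]$ and, correspondingly, $1-u$ satisfies a differential inequality of the form $\Delta(1-u)\geq c_0(1-u)$ wherever $u>1-\eta_0$. On the other hand, on the whole ball one at least has $\Delta(1-u)=-F'(u)\leq 0$, so $1-u$ is superharmonic and nonnegative; combined with an interior Harnack / mean-value type bound this already shows $1-u$ is controlled in the interior by its average. To upgrade $1-u$ small to $u^2-1$ small (equivalently $1-u$ small, since $u\in(0,1)$) I would compare $1-u$ on $B(\mathtt x,\rho)$ with the explicit radial supersolution $w(\mathtt y):=A\,\frac{I_0(\sqrt{c_0}\,|\mathtt y-\mathtt x|)}{I_0(\sqrt{c_0}\,\rho)}\cdot$ (a cutoff handling the region where one only has superharmonicity), where $I_0$ is the modified Bessel function and $A=2$ is a uniform bound for $1-u$; the point is that $w$ decays like $e^{-\sqrt{c_0}(2\rho-r)}$ from the boundary of $B(\mathtt x,2\rho)$ inward, so choosing $\rho=\rho_\delta$ large enough forces $1-u\leq\delta$ on $B(\mathtt x,\rho_\delta)$. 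This gives the claimed implication with $\rho_\delta$ depending only on $\delta$ (through $c_0$, $\eta_0$, and the uniform bound $|u|<1$), independently of the particular solution $u$.

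Alternatively, and perhaps more cleanly for a write-up, I would run a pure compactness argument: if the conclusion failed there would exist $\delta_0>0$, solutions $u_n$ of (\ref{AC}) with $|u_n|<1$, and points $\mathtt x_n$ with $B(\mathtt x_n,2n)\cap\mathcal Z(u_n)=\emptyset$ but $\sup_{B(\mathtt x_n,n)}|u_n^2-1|>\delta_0$; translating so $\mathtt x_n=0$ and passing to a subsequence (elliptic estimates give $\mathcal C^2_{loc}$ convergence, since $|u_n|\le 1$ gives a uniform bound and $F$ is smooth), the limit $u_\infty$ solves (\ref{AC}) on all of $\mathbb R^2$, has $|u_\infty|\le 1$, never vanishes, and therefore by the strong maximum principle is identically $+1$ or $-1$ — contradicting $\sup_{B(0,1)}|u_\infty^2-1|\ge\delta_0$. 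That a global, nonvanishing, $[-1,1]$-valued solution must be constant $\pm1$ follows because $u_\infty$ of one sign, say $0<u_\infty\le1$, makes $1-u_\infty\ge0$ superharmonic and bounded on $\mathbb R^2$, hence constant, and the only admissible constant is $1$.

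The main obstacle is the rate: one needs $\rho_\delta$ to be \emph{uniform} over all solutions, which is exactly why the bare compactness argument (which only yields existence of \emph{some} $\rho$ depending on the sequence) must be supplemented — the quantitative barrier comparison with the modified-Bessel supersolution, or equivalently an explicit exponential-decay estimate for superharmonic-plus-coercive functions on large balls, is the step that does the real work. Care is also needed because near the nodal set $F'(u)$ changes sign, so the coercive inequality $\Delta(1-u)\ge c_0(1-u)$ only holds where $u>1-\eta_0$; this is handled by first using mere superharmonicity of $1-u$ on the ``outer'' annulus to push $u$ past the threshold $1-\eta_0$ on a slightly smaller ball, and only then invoking the coercive barrier on the remaining inner ball.
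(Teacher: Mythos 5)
You have the sign of $F'$ backwards: since $F$ is nonnegative, even, vanishes only at $\pm 1$, and $F'\neq 0$ on $(0,1)$, we have $F'<0$ on $(0,1)$, not $F'>0$. Consequently, on $\{0<u<1\}$ the function $1-u$ satisfies $\Delta(1-u)=-F'(u)\geq 0$ and is therefore \emph{subharmonic}, not superharmonic. For a subharmonic function the maximum principle controls the interior by the boundary, which is exactly what is not available here (there is no a priori smallness of $1-u$ on $\partial B(\mathtt x,2\rho)$). This kills your proposed step of ``first using mere superharmonicity of $1-u$ on the outer annulus to push $u$ past the threshold $1-\eta_0$'': subharmonicity of $1-u$ gives no interior smallness at all, and that step is in fact the whole content of the lemma; your Bessel comparison only kicks in once $u$ is already above $1-\eta_0$. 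The compactness alternative has the same gap in disguise: after recentering at the bad points and extracting a $\mathcal C^2_{\mathrm{loc}}$ limit, $u_\infty$ is only nonnegative (strict positivity is not preserved in the limit), so the Liouville step (with the sign corrected: bounded subharmonic on $\mathbb R^2$ implies constant) gives $u_\infty\in\{0,1\}$, and the case $u_\infty\equiv 0$ is not excluded. That is exactly the ``$u$ uniformly small but positive on a huge zero-free ball'' regime, and neither of your two arguments rules it out. Incidentally, your worry that the compactness argument ``only yields existence of some $\rho$ depending on the sequence'' is unfounded: negating the lemma with $\rho=n$ and translating so that the bad points sit at the origin does produce a uniform $\rho_\delta$; the genuine obstruction is $u_\infty\equiv 0$.

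The paper circumvents all of this with a single barrier that works on the whole range $s\in[0,1-\delta]$ rather than only near $1$. One compares $u$ from below with $\epsilon\,\phi_R$, where $\phi_R$ is the first Dirichlet eigenfunction of $-\Delta$ on $B_R$, normalized so $\phi_R(0)=1$. Since $\mu_R=\mu_1/R^2\to 0$ as $R\to\infty$ while $\inf_{s\in(0,1-\delta]}\left(-F'(s)/s\right)>0$ (this uses $F''(0)<0$), one can fix $R_0$ so that $-F'(s)\geq\mu_R\, s$ on $[0,1-\delta]$ for every $R\geq R_0$, and then slide $\epsilon\phi_R$ upward to a first interior touching point to reach a contradiction whenever $u<1-\delta$ somewhere in $B(\mathtt x,R)$. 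This gives $u\geq 1-\delta$ on $B(\mathtt x,R)$ directly, with no need to first localize $u$ near $1$, and it simultaneously excludes the $u_\infty\equiv 0$ scenario that your compactness argument leaves open.
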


This result is a simple corollary of  the result below, whose proof can already be found in \cite{MR1470317} (see Lemma 3.1-Lemma 3.3 therein) and also in \cite{MR1637919}~:
\begin{lemma}
\label{le:2.2}
There exist constants $C>0$ and $\alpha >0$ such that, for any solution of (\ref{AC}) which satisfies  $\left\vert u\right\vert <1$, we have
\begin{equation}
|u(\mathtt{x})^{2} - 1| + |\nabla u(\mathtt{x})| + |\nabla^{2}u(\mathtt{x})| \leq C \, e^{-\alpha {\mathrm{dist}}(\mathtt{x},\mathcal Z(u))} , \label{exp est 1}
\end{equation}
for all ${\tt x} \in \mathbb R^2$. 
\end{lemma}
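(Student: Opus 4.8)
The estimate \eqref{exp est 1} is classical — it is proved in \cite{MR1470317} (Lemmas 3.1--3.3) and in \cite{MR1637919} — and I would organize the argument in three steps. \textbf{Step 1 (uniform elliptic bounds).} Since $|u|<1$, the right hand side $F'(u)$ of \eqref{AC} is bounded by a constant depending only on $F$, so interior Schauder (or $W^{2,p}$) estimates on balls of radius $1$ give $\|u\|_{\mathcal C^{2,\beta}(\mathbb R^2)}\le C$ with $C$ independent of the solution; in particular $|\nabla u|+|\nabla^2 u|\le C$ everywhere, which already proves \eqref{exp est 1} at every point where $\mathrm{dist}(\mathtt x,\mathcal Z(u))$ is bounded. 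The two remaining steps are the decay of $|u^2-1|$ away from the nodal set, and the upgrade of this decay to an exponential rate together with the derivative bounds.

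\textbf{Step 2 (smallness of $|u^2-1|$ far from $\mathcal Z(u)$).} I would first show that for every $\delta>0$ there is $R_\delta>0$, independent of the solution, such that $\mathrm{dist}(\mathtt x,\mathcal Z(u))\ge R_\delta$ implies $|u(\mathtt x)^2-1|\le\delta$. Argue by contradiction: otherwise there are solutions $u_n$ (with $|u_n|<1$) and points $\mathtt x_n$ with $d_n:=\mathrm{dist}(\mathtt x_n,\mathcal Z(u_n))\to\infty$ and $|u_n(\mathtt x_n)^2-1|\ge\delta$. Translating $\mathtt x_n$ to the origin and using Step~1, a subsequence of $(u_n)$ converges in $\mathcal C^2_{\mathrm{loc}}(\mathbb R^2)$ to an entire solution $u_\infty$ of \eqref{AC} with $|u_\infty|\le 1$ and $|u_\infty(0)^2-1|\ge\delta$. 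For each fixed $\rho$, $u_n$ has no zero in $B(0,\rho)$ for $n$ large, hence a constant sign there; passing to a further subsequence along which this sign is $+$, we get $u_n\ge 0$ on $B(0,d_n)$, so $u_\infty\ge 0$ on $\mathbb R^2$. Since $F'\le 0$ on $[0,1]$ (because $F$ is even with $F'(0)=0$, $F'\ne 0$ on $(0,1)$ and $F(0)>0=F(1)$), we get $\Delta u_\infty=F'(u_\infty)\le 0$, i.e. $u_\infty$ is a nonnegative superharmonic function on $\mathbb R^2$, hence constant; so $u_\infty\equiv c$ with $F'(c)=0$, that is $c\in\{0,1\}$. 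The value $c=1$ contradicts $|u_\infty(0)^2-1|\ge\delta$, so necessarily $u_\infty\equiv 0$.

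It remains to rule out $u_\infty\equiv 0$, i.e. that the $u_n$ get trapped near the unstable value $0$ on larger and larger balls; this is the crux of the proof and is where the instability of $0$ enters. Testing $\Delta u_n=F'(u_n)$ against the first Dirichlet eigenfunction $\phi_1>0$ of $-\Delta$ on $B(0,R)$, whose eigenvalue $\lambda_1(R)\to 0$ as $R\to\infty$, and integrating by parts gives $-\int_{\partial B(0,R)}u_n\,\partial_\nu\phi_1=\lambda_1(R)\int_{B(0,R)}u_n\phi_1+\int_{B(0,R)}F'(u_n)\phi_1$; the left hand side is $\ge 0$ (since $u_n>0$ on $\overline{B(0,R)}$ for $n$ large and $\partial_\nu\phi_1<0$ on $\partial B(0,R)$), while, since $F'(s)\le-\kappa s$ near $0$ for some $\kappa>0$ when $F''(0)<0$, the right hand side is $\le(\lambda_1(R)-\kappa)\int u_n\phi_1<0$ once $R$ is large and $\sup_{B(0,R)}u_n$ is small, a contradiction. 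When $F''(0)=0$ one first rescales $u_n$ by its sup on $B(0,R)$ and reduces to the nonexistence of positive entire solutions of $-\Delta v=c\,v^{p}$ ($c>0$, $p>1$) in $\mathbb R^2$. This part is carried out in detail in \cite{MR1470317,MR1637919}, and is the main obstacle.

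\textbf{Step 3 (exponential rate and derivatives).} Choose $\delta_0>0$ so small that, using $F'(1)=0$ and $F''(1)>0$ (valid since $F\ge 0=F(1)$ and $F''(1)\ne 0$), one has $-F'(1-w)\ge\tfrac12 F''(1)\,w=:c_1 w$ for $w\in[0,\delta_0]$, and symmetrically near $-1$; set $R_0:=R_{\delta_0}$ from Step~2. Let $\mathtt x$ satisfy $d:=\mathrm{dist}(\mathtt x,\mathcal Z(u))>R_0$. Every point of $B(\mathtt x,d-R_0)$ is at distance $>R_0$ from $\mathcal Z(u)$, so on this ball $|u|\ge 1-\delta_0$; by connectedness $u$ has one sign there, say $u>0$, and $w:=1-u\in[0,\delta_0]$ satisfies $\Delta w=-F'(1-w)\ge c_1 w$. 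Comparing $w$ on $B(\mathtt x,d-R_0)$ with the radial barrier $W$ solving $\Delta W=c_1 W$ with $W\equiv\delta_0$ on the boundary (so $W(\mathtt x)\le C\,\delta_0\,e^{-\sqrt{c_1}\,(d-R_0)}$), the maximum principle for $\Delta-c_1$ yields $1-u(\mathtt x)\le C e^{-\alpha d}$, hence $|u(\mathtt x)^2-1|\le 2(1-u(\mathtt x))\le C e^{-\alpha d}$ with $\alpha:=\tfrac12\sqrt{c_1}$ (and similarly if $u<0$; the range $d\le R_0$ is trivial after enlarging $C$). Finally, for the derivative terms, take $\mathtt x$ with $d\ge 2$: on $B(\mathtt x,1)$ one has $\mathrm{dist}(\cdot,\mathcal Z(u))\ge d-1$, so $|u^2-1|\le C e^{-\alpha d}$ there, and, $u$ having a fixed sign, $v:=1\mp u$ solves $\Delta v=-F'(1\mp v)$ with $\|v\|_{L^\infty(B(\mathtt x,1))}+\|\Delta v\|_{L^\infty(B(\mathtt x,1))}\le C e^{-\alpha d}$ (using $F'(\pm1)=0$ and the mean value theorem). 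A first interior estimate bounds $\|v\|_{\mathcal C^1(B(\mathtt x,1/2))}$, hence the Hölder norm of $F'(1\mp v)$ on $B(\mathtt x,1/2)$, and a second Schauder estimate bounds $\|v\|_{\mathcal C^{2,\beta}(B(\mathtt x,1/4))}$ — all by $C e^{-\alpha d}$; thus $|\nabla u(\mathtt x)|+|\nabla^2 u(\mathtt x)|\le C e^{-\alpha d}$, and for $d<2$ the bounds of Step~1 suffice. Combining the three steps gives \eqref{exp est 1}.
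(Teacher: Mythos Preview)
Your proof is correct. The main difference from the paper is in Step~2 (smallness of $|u^2-1|$ far from $\mathcal Z(u)$). The paper argues directly, without compactness: it fixes $R_0$ so that $-F'(s)\ge(\mu_1/R_0^2)\,s$ for $s\in[0,1-\delta]$, and then on any ball $B(\bar{\tt x},R)\subset\{u>0\}$ with $R>R_0$ slides the subsolution $\epsilon\,\phi_R(\cdot-\bar{\tt x})$ (where $\phi_R$ is the first Dirichlet eigenfunction of $-\Delta$ on $B_R$) up from below until it touches $u$, deriving a contradiction at the touching point unless $u\ge 1-\delta$ on $B(\bar{\tt x},R)$. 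Your route instead passes to a limit $u_\infty$, uses the Liouville theorem for nonnegative superharmonic functions on $\mathbb R^2$ to force $u_\infty$ constant, and then rules out $u_\infty\equiv 0$ by what is essentially the same eigenfunction computation, applied to $u_n$ for large $n$. The paper's argument is shorter and avoids any compactness; yours makes the role of the instability of the equilibrium $u=0$ more explicit and, as you note, isolates the degenerate case $F''(0)=0$ (which the paper's sliding argument, as written, in fact needs to exclude, since the choice of $R_0$ requires $\inf_{s\in(0,1-\delta]}(-F'(s)/s)>0$). Step~3 is essentially the same in both proofs---comparison with an exponentially decaying supersolution; the paper uses the explicit barrier $e^{-\alpha\sqrt{1+r^2}}$ and does not spell out the gradient and Hessian estimates, which your Schauder bootstrap provides.
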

\begin{proof} Since this Lemma plays a central role in our result, we give here a complete proof for the sake of completeness. 

\medskip

We denote by $\phi_R$ the eigenfunction which is associated to the first eigenvalue of  $-\Delta$ on the ball of radius $R$, under $0$ Dirichlet boundary conditions. We assume that $\phi_R$ is normalized so that $\phi_R(0) = \sup_{B(0,R)} \phi_R =1$. Recall that  the associated eigenvalue $\mu_R$ satisfies $\mu_R = \mu_1 /R^2$. 

\medskip

Given $\delta \in (0,1)$, we choose $R_0 >0$ such that 
\[
 - F^{\prime} \left(  s\right) \, R_0^2 \geq  \mu_{1} \, s, 
\]
for all $s \in [0, 1-\delta]$. Assume that $R >R_0$ and that $B({\tt x}, 2R) \subset \mathbb R^2 - \mathcal Z (u)$. To simplify the discussion, let us also assume that $u >0$ in $B({\tt x}, 2R)$. 

\medskip

We claim that $u \geq 1-\delta$ in $B({\tt x}, R)$.  Indeed, if this is not the case then there exists $\bar {\tt x} \in B({\tt x}, R)$ such that $u(\bar {\tt x}) < 1-\delta$. In this case, we define $\epsilon >0$ to be the largest positive real such that 
\[
u \geq \epsilon \, \phi_R (\cdot -\bar {\tt x}),
\]
in $B (\bar {\tt x}, R)$. Certainly $\epsilon \leq 1-\delta$ and there exists ${\tt z} \in B (\bar {\tt x}, R)$ such that 
\[
u ({\tt z}) = \epsilon \, \phi_R ({\tt z} -\bar {\tt x}) \leq 1-\delta.
\]
By construction of $R$, we can write
\[
- \epsilon \, \Delta \phi_R = \frac{\mu_1}{R^2} \,  \epsilon \, \phi_R < F'(\epsilon \phi_R) ;
\]
Since $-\Delta u = - F'(u)$ we conclude that 
\[
- \Delta (\epsilon \phi_R - u) < 0 ,
\]
at the point ${\tt z}$ and this contradicts the fact that $u-\epsilon \phi_R$ has a local minimum at ${\tt z}$. The proof of the claim is complete. 

\medskip

We now fix $\alpha >0$ such that $\alpha^2 < F''(1)$ and we choose $\delta \in (0,1)$ close to $1$ so that $F''(t) \geq \alpha^2$ for all $t \in [1-\delta , 1]$. According to the above claim, we know that for all $R > R_0$, $u \geq 1-\delta$ (or $u < \delta -1$) in $B(\bar {\tt x}, R)$ provided $B(\bar {\tt x}, 2R) \subset \mathbb R^2 - \mathcal Z (u)$. Therefore, we get
\[
- \Delta (1-u) = -  \frac{F'(u) - F'(1)}{u-1} \, (1-u) \leq - \alpha^2 \, (1-u) ,
\]
in $B(\bar {\tt x}, R)$. A direct computation shows that
\[
(-\Delta + \alpha^2) \, e^{-\alpha \sqrt{1+ r^2}} \geq 0 ,
\]
where $r := | {\tt x} - \bar{\tt x}|$. This, together with the maximum principle, implies the exponential decay of $1- u^2$ away from $\mathcal Z(u)$, the zero set of $u$.
\end{proof}

\subsection{The balancing formul\ae}

We describe the balancing formul\ae\  for solutions of (\ref{AC}) in the form they were introduced in \cite{dkp-2009}.  Assume that $u$ is a smooth function defined in $\mathbb{R}^{2}$ and $X$ a vector field also defined in $\mathbb R^2$. We define the vector field
\[
\Xi (X,u) : =   \left( \frac12 | \nabla u |^{2} + F(u) \right)  X - X(u) \nabla u .
\]

Recall that Killing vector fields are vector fields which   generate   the group of isometries of $\mathbb R^2$.  They are linear combinations (with constant coefficients) of the constant vector fields $\partial_x$ and $\partial_y$ generating the group of translations and the vector field $x \, \partial_{y} - y \, \partial_{x}$ which generates the group of rotations in $\mathbb R^2$. 

\medskip

We have the~:
\begin{lemma}[Balancing formul\ae] \cite{dkp-2009}   Assume that $u$ is a solution of (\ref{AC}) and that $X$ is a Killing vector field. Then  $\mbox{\rm div} \,  \Xi (X,u)  = 0$.
\label{le:PI}
\end{lemma}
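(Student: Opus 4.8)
The plan is to recognize $\Xi(X,u)$ as the contraction with $X$ of the stress--energy tensor associated to the Allen--Cahn energy density $\tfrac12|\nabla u|^2 + F(u)$, and then simply to compute its divergence. Concretely, I would introduce the symmetric $2\times 2$ tensor
\[
T_{ij} := \partial_i u \, \partial_j u - \delta_{ij}\left(\tfrac12 |\nabla u|^2 + F(u)\right), \qquad i,j\in\{1,2\},
\]
and observe that, writing $X=(X_1,X_2)$, the $i$-th component of $\Xi(X,u)$ is precisely $-\sum_j T_{ij}X_j$ (a short check against the definition in the statement, using that the energy density is $\tfrac12|\nabla u|^2+F(u)$ and the Euler--Lagrange equation is $\Delta u - F'(u)=0$, confirms this sign convention). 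Hence it suffices to show $\sum_i \partial_i\big(\sum_j T_{ij}X_j\big)=0$.

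Expanding by the Leibniz rule splits this into two contributions,
\[
\sum_{i} \partial_i \Big( \sum_j T_{ij} X_j \Big) = \sum_j \Big( \sum_i \partial_i T_{ij} \Big) X_j + \sum_{i,j} T_{ij} \, \partial_i X_j .
\]
For the first contribution, a direct computation gives, for each fixed $j$,
\[
\sum_i \partial_i T_{ij} = (\Delta u)\, \partial_j u + \sum_i \partial_i u\, \partial_i\partial_j u - \sum_i \partial_i u\, \partial_i\partial_j u - F'(u)\, \partial_j u = \big(\Delta u - F'(u)\big)\, \partial_j u ,
\]
which vanishes because $u$ solves (\ref{AC}). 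For the second contribution, since $T$ is symmetric one has $\sum_{i,j} T_{ij}\,\partial_i X_j = \tfrac12 \sum_{i,j} T_{ij}\big(\partial_i X_j + \partial_j X_i\big)$, and this vanishes because a Killing field $X$ satisfies $\partial_i X_j + \partial_j X_i = 0$ for all $i,j$. Adding the two vanishing contributions yields $\mbox{\rm div}\,\Xi(X,u)=0$.

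The computation is elementary and I do not expect a genuine obstacle; the only point meriting a word of justification is the antisymmetry $\partial_i X_j + \partial_j X_i = 0$. One may take this as the very definition of a Killing field, or, since in $\mathbb R^2$ every Killing field is a constant-coefficient linear combination of $\partial_x$, $\partial_y$ and $x\,\partial_y - y\,\partial_x$, one may verify the identity directly on these three generators and then extend by linearity, using also that $X\mapsto \Xi(X,u)$ is linear in $X$. If one prefers to avoid tensor notation altogether, the whole statement can equivalently be established by writing out $\mbox{\rm div}\,\Xi(X,u)$ separately for each of the three generators and invoking the equation $\Delta u = F'(u)$ in each case.
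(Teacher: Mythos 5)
Your proof is correct and is essentially the same as the paper's: the one-line hint (``multiply the equation by $X(u)$ and use simple manipulations on partial derivatives'') unwinds to exactly the two contributions you isolate, namely $(\Delta u - F'(u))\,X(u)$ killed by the equation and $\tfrac12\sum_{i,j}T_{ij}(\partial_i X_j+\partial_j X_i)$ killed by the Killing identity. Packaging the computation through the stress--energy tensor $T_{ij}$ is just a tidy way of organizing those manipulations, and your observation that $\partial_i X_j+\partial_j X_i=0$ is what makes the second term drop out is precisely the point the paper leaves implicit.
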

\begin{proof} To prove this formula, just multiply the equation (\ref{AC}) by $X(u)$ and use simple manipulations on partial derivatives. \end{proof}

This result is nothing but an expression  of the invariance of (\ref{AC}) under the action of rigid motions. To see how useful this result will be for us, let us assume that $u \in \mathcal M^{even}_4$. By definition, the nodal set of $u$ is, in the upper right quadrant $Q^{\llcorner}$ of $\mathbb R^2$, asymptotic to an oriented half line
\[
\lambda : = r \, {\tt e}^\perp + \mathbb R \, {\tt e} ,
\] 
where  $r \in \mathbb R$ and ${\tt e} \in S^1$. We write ${\tt e } : =  (\cos \theta, \sin \theta)$ and, since we assume that the oriented line  $\lambda$ lies in $Q^{\llcorner}$, we have $\theta \in (0, \pi/2)$. 

\medskip

Given $R >0$, we define the plain triangle
\[
T_R := \left\{ (x,y) \in \mathbb R^2 \, : \,  x>0, \, y>0\quad \mbox{and} \quad (x,y) \cdot {\tt e} <R  \right\} ,
\]

\medskip

The divergence theorem implies that 
\begin{equation}
\int_{\partial T_R} \Xi (X,u)  \cdot\nu\, ds = 0 , \label{eq:lsr}
\end{equation}
where $\nu$ is the (outward pointing) unit normal vector field to $\partial T_R$.

\medskip

We set
\begin{equation}
c_0 := \int_{-\infty}^{+\infty} \left(  \frac{1}{2}Ê\, (H')^2  + F(H) \right) \,   ds.
\label{eq:c0}
\end{equation}
Taking $X := \partial_x$ and letting $R$ tend to infinity, we conclude, using the fact that $u$ is asymptotic to $\pm H$ along the half line $\lambda$, that 
\begin{equation}
c_0 \, \cos \theta  = \int_{x=0, \, y>0}    \left( \frac12 |\partial_y u|^2  + F(u) \right) \, dy  .
\label{eq:bc1}
\end{equation}
Observe that we have implicitly used the fact that $\partial_x u=0$ along the $y$-axis. Similarly, taking  $X := \partial_y$ and letting $R$ tend to infinity, we conclude that 
\begin{equation}
c_0 \, \sin \theta = \int_{y=0 , \,  x > 0}    \left( \frac12 |\partial_xu|^2  + F(u) \right) \, dx  .
\label{eq:bc2}
\end{equation}
Finally,  taking $X = x \, \partial_y - y \partial_x$ and letting $R$ tend to infinity, we get
\begin{equation}
c_0 \,r =   \displaystyle \int_{x=0, \, y>0}    \left( \frac12 |\partial_y u|^2  +F(u) \right)  y \, dy  -\int_{y=0, \, x>0} \left( \frac12 |\partial_x u|^2  +  F(u) \right) x \, dx .
\label{eq:bc3}
\end{equation}
The key observation is that it is possible to detect both the angle $\theta$ and the parameter $r$ which characterize the half line $\lambda$ just by performing some integration over the $x$-axis and the $y$-axis.  As one can guess this property will be very useful in the compactness analysis we are going to perform now. In some sense it will be enough to pass to the limit in the above integrals to guarantee the convergence of the parameters characterizing the asymptotics of the zero set of the solutions of (\ref{AC}).

\section{Properness}

In this section, we prove a compactness result for the set of $4$-ended solutions of (\ref{AC}). More precisely, we prove that, given $(u_n)_{n\geq 0}$, with $u_n \in \mathcal M^{even}_4$, a sequence of solutions of (\ref{AC}) whose angles $\theta_n := \pi/4 +  \mathcal P (u_n)$ converges to some limit angle $\theta_* \in (0, \pi/2)$, one can extract a subsequence which converges to a $4$-ended solution $u_*$ with angle $\theta_*$. Naturally, the fact that one can extract a subsequence which converges uniformly (at least on compacts of ${\mathbb R}^2$) to a solution $u_*$ of (\ref{AC}) is not surprising since the functions $u_n$ are uniformly bounded and, by elliptic regularity, have gradient which is uniformly bounded, hence this compactness result simply follows from the application of Ascoli-Arzela's Theorem. In general, it is hard to say anything about the limit solution $u_*$. It turns out that it is possible to control the zero set of the limit solution $u_*$ and prove that $u_*$ is also a $4$-ended solution. As we will see, a key ingredient in this analysis is provided by  the balancing formul{\ae}   defined in the previous section. 

\begin{theorem}
\label{teo compactness} Assume that we are given a sequence $(u_{n})_{n\geq 0}$, with $u_n \in \mathcal M^{even}_4$, which converge uniformly on compacts to a solution $u_*$. If $(\mathcal P(u_n))_{n\geq 0}$ converges in $\left( -\pi/4 ,\pi/4 \right)$, then, $u_{\ast} \in \mathcal M^{even}_4$, 
\[
\lim_{n\to \infty} \mathcal P(u_{n}) = \mathcal P(u_{\ast}) ,
\]
and 
\[
\lim_{n\to \infty} u_{n} =u_{\ast},
\] 
in the topology of $\mathcal{S}_{4}$.
\end{theorem}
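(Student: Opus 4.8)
The plan is to establish compactness in three stages: first control the nodal set of $u_\ast$ using the *a priori* estimate (Lemma~\ref{le:4.1}) together with the balancing formul\ae, then upgrade the convergence from "on compacts" to convergence in $\mathcal S_4$, and finally identify the limit angle.

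\medskip

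\textbf{Step 1: locating the nodal set of $u_\ast$.} Write $\theta_n := \pi/4 + \mathcal P(u_n)$ and $\theta_\ast := \lim \theta_n \in (0,\pi/2)$, and let $r_n$ be the second coordinate of $\mathcal F(u_n)$, so that the nodal set of $u_n$ in $Q^{\llcorner}$ is asymptotic to $\lambda_n = r_n\,{\tt e}_n^\perp + \mathbb R\,{\tt e}_n$ with ${\tt e}_n = (\cos\theta_n,\sin\theta_n)$. The first thing I would do is use the balancing formula~(\ref{eq:bc1}): since $u_n\to u_\ast$ uniformly on compacts and $|u_n|\le 1$, the integrand $\frac12|\partial_y u_n|^2 + F(u_n)$ converges uniformly on compacts, and one needs uniform decay of this integrand in $|y|$ to pass to the limit under the integral. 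This decay is exactly what Lemma~\ref{le:4.1}/Lemma~\ref{le:2.2} provides \emph{once} we know the nodal set of $u_n$ near the $y$-axis stays at bounded distance from the origin uniformly in $n$ — and here is where the monotonicity $\partial_x u_n<0$, $\partial_y u_n>0$ in $Q^{\llcorner}$ is crucial: it forces the nodal set of each $u_n$ restricted to $Q^{\llcorner}$ to be a graph $y = \gamma_n(x)$ of a monotone function, so the nodal set meets the $y$-axis at a single point $(0, a_n)$ and the $x$-axis at a single point $(b_n,0)$. Passing to the limit in~(\ref{eq:bc1}) and~(\ref{eq:bc2}) (the right-hand sides are monotone convergent integrals, bounded above by $c_0$) gives that $\frac12|\partial_y u_\ast|^2 + F(u_\ast)$ is integrable on the $y$-axis with
\[
\int_{x=0,\,y>0}\Bigl(\tfrac12|\partial_y u_\ast|^2 + F(u_\ast)\Bigr)\,dy = c_0\cos\theta_\ast,
\]
and similarly on the $x$-axis with value $c_0\sin\theta_\ast$. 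Positivity of $\theta_\ast$ and $\pi/2-\theta_\ast$ keeps both right-hand sides strictly positive and strictly below $c_0$, which is what will eventually prevent the ends from degenerating. From integrability of the energy density along the axes plus the *a priori* estimate one deduces that $a_n, b_n$ stay bounded (if $a_n\to\infty$ the integral over the $y$-axis would tend to $0$, contradicting $c_0\cos\theta_\ast>0$), hence $u_\ast$ genuinely vanishes somewhere, its nodal set in $Q^{\llcorner}$ is again a monotone graph by the (non-strict, then by the strong maximum principle strict) monotonicity inherited in the limit, and the rotational balancing formula~(\ref{eq:bc3}) passes to the limit to pin down the value $c_0 r_\ast$ with $r_\ast := \lim r_n$ (the $r_n$ are bounded by the same argument applied to the first-moment integrals). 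This shows $u_\ast$ has a nodal set asymptotic to four half-lines with angle $\theta_\ast$ and offset $r_\ast$; symmetry and the sign conditions are inherited by uniform convergence, so $u_\ast \in \mathcal M_4^{even}$ with $\mathcal P(u_\ast) = \theta_\ast - \pi/4 = \lim \mathcal P(u_n)$.

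\medskip

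\textbf{Step 2: convergence in $\mathcal S_4$.} Having produced $\lambda_\ast = (\lambda_{1,\ast},\dots,\lambda_{4,\ast})$ with $\lambda_n \to \lambda_\ast$ in $\Lambda^4_{ord}$ (which is the content of $\theta_n\to\theta_\ast$, $r_n\to r_\ast$), it remains to show $u_n - u_{\lambda_n} \to u_\ast - u_{\lambda_\ast}$ in $W^{2,2}(\mathbb R^2)$. Since $u_{\lambda_n}\to u_{\lambda_\ast}$ in $W^{2,2}$ by continuity of the approximate-solution construction in $\lambda$ (the cutoffs $\mathbb I_j$ depend smoothly on $\lambda$, and $H$ is smooth), it is enough to show $u_n - u_{\lambda_n}\to u_\ast - u_{\lambda_\ast}$ in $W^{2,2}$. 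On a fixed large ball $B_R$ this follows from uniform-on-compacts convergence plus elliptic estimates. Outside $B_R$, one uses that $v_n := u_n - u_{\lambda_n}$ satisfies a linear equation of the form $(\Delta - F''(u_{\lambda_n}))v_n = E_n + Q_n(v_n)$ where $E_n$ (the error $\Delta u_{\lambda_n} - F'(u_{\lambda_n})$) has $W^{2,2}$-norm outside $B_R$ uniformly small in $R$ (independently of $n$, by the Refined Asymptotics / explicit structure of $u_{\lambda_n}$), and where the linear operator has uniformly bounded inverse on the exterior domain in the relevant weighted space — this is the linear theory from \cite{dkp-2009}. A standard tail estimate then gives $\|v_n\|_{W^{2,2}(\mathbb R^2 - B_R)} \le \varepsilon(R)$ uniformly in $n$ with $\varepsilon(R)\to 0$; combined with the compact-set convergence this yields $v_n \to v_\ast := u_\ast - u_{\lambda_\ast}$ in $W^{2,2}(\mathbb R^2)$, i.e. $\mathcal J(u_n)\to\mathcal J(u_\ast)$, which is precisely convergence in the topology of $\mathcal S_4$.

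\medskip

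\textbf{The main obstacle} I expect is Step~1 — specifically, ruling out degeneration of the ends and justifying passage to the limit in the balancing integrals without a priori knowing that the nodal sets of $u_n$ do not escape to infinity or collapse onto an axis. The monotonicity from the Gui-type symmetry theorem makes the nodal set a graph and reduces the problem to controlling the two scalar intercepts $a_n, b_n$ (and the first moments), and the balancing formul\ae\ convert this into the statement that the limiting energy density along each axis has prescribed total mass $c_0\cos\theta_\ast$, $c_0\sin\theta_\ast$ strictly between $0$ and $c_0$; the hypothesis that $\mathcal P(u_n)$ converges \emph{inside} $(-\pi/4,\pi/4)$, i.e. bounded away from the endpoints, is exactly what keeps these masses bounded away from $0$ and $c_0$ and thereby prevents both the collapse and the escape scenarios. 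Once that is in hand, the rest is the linear-analysis bookkeeping of Step~2, which is routine given the machinery of \cite{dkp-2009}.
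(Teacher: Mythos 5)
Your overall architecture (balancing formul\ae\ to pin down $\theta_*$ and $r_*$, then upgrade to $\mathcal S_4$-convergence) matches the paper's, and your identification of the main obstacle is on target, but you then claim the monotonicity "reduces the problem to controlling the two scalar intercepts $a_n,b_n$," and this is where the argument has a real gap. Knowing that the nodal set of $u_n$ meets the $y$-axis at a bounded height $a_n$ does \emph{not} give the uniform-in-$n$ tail decay of the integrand along the axes that you need to pass to the limit in~(\ref{eq:bc1})--(\ref{eq:bc3}). The nodal set in $Q^{\llcorner}$ is a monotone graph $y=f_n(x)$, but away from the origin the slope $f_n'$ can \emph{a priori} be very large (or very small) over long stretches before settling down to $\tan\theta_n$; during such a stretch the nodal set can stay close to the $y$-axis (respectively $x$-axis) at heights much larger than $a_n$, destroying the uniform exponential decay furnished by Lemma~\ref{le:2.2} and wrecking the dominated-convergence step — particularly for the moment integral~(\ref{eq:bc3}), whose integrand carries an extra factor of $y$. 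The paper handles exactly this with Lemma~\ref{twist}: a uniform-in-$n$ control on $|f_n'(x)-\tan\theta_n|$ for $x$ large, proved by a delicate contradiction argument in which the balancing formula is applied not over $Q^{\llcorner}$ but over a tilted, translated domain $D_k$ anchored at a putative "bad" point $(\bar x_k,\bar y_k)$ of the graph; only after this does one get the cone-type confinement of the nodal set (Lemma~\ref{n3}) that justifies dominated convergence. Without some version of this "anti-twist" lemma, Step~1 is incomplete.

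Two smaller points. First, your appeal to "monotone convergent integrals, bounded above by $c_0$" is not sufficient: monotone convergence in $R$ is fine for each fixed $n$, but interchanging the $n\to\infty$ and $R\to\infty$ limits is the entire issue, and boundedness by $c_0$ gives no uniform integrability without the tail control above. Second, your Step~2 is a plausible alternative to what the paper does — they first prove a uniform-in-$n$ pointwise decay of $v_n$ by a blow-up/contradiction argument combined with de Giorgi and the two balancing identities, and then quote the Refined Asymptotics Theorem noting that the exponential rate $\delta$ can be taken uniform because $\lambda_n\to\lambda_*$, whereas you invoke a uniform linear-theory estimate on exterior domains; both routes should work, but yours outsources more to the linear theory of \cite{dkp-2009} than the paper actually records, so you would need to verify that the exterior inverse bounds there really are uniform as the end parameters vary in a compact set.
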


The proof of this Theorem is decomposed into many small Lemma. Assume that we are given a sequence  $(u_n)_{n\geq 0}$, with $u_n \in \mathcal M^{even}_4$. Recall that
\begin{equation}
\partial_x u_n < 0 \qquad \mbox{and} \qquad \partial_y u_n > 0 ,
\label{mon}
\end{equation}
in the right upper quadrant
\[
Q^{\llcorner} : =\{(x,y) \in \mathbb R^2 \, : \, x>0 , \quad \mbox{and} \quad y >0 \} .
\]
We denote by 
\[
\mathcal Z_n : =  \{ (x,y) \in \mathbb R^2 \, :  \, u_n (x,y)=0 \} ,
\]
the nodal set of $u_n$. Monotonicity of $u_n$ in $Q^{\llcorner}$ implies that the zero set of $u_n$ is either a graph over the $x$-axis or a graph over the $y$-axis.  In particular, $\mathcal Z_n \cap \partial Q^{\llcorner}$ contains exactly one point which we denote by $p_n$
\[
\mathcal Z_n \cap \partial Q^{\llcorner} = \{ p_n\} .
\]

We define $\theta_n : = \pi/4 + \mathcal P(u_n)$ and $r_n$ to be the parameters describing the asymptotics of $\mathcal Z_n$ in the right upper quadrant $Q^{\llcorner}$. In other words, $\mathcal Z_nÊ\cap Q^{\llcorner}$ is asymptotic to the oriented half line
\[
\lambda_n : =  r_n \, {\tt e}_n^\perp + \mathbb R \, {\tt e}_n ,
\]
where ${\tt e}_n : =  (\cos \theta_n, \sin \theta_n)$. Finally, we assume that 
\[
\lim_{n \to \infty} \theta_n = \theta_* \in (0, \pi/2) .
\]

First, we prove that the point $p_n$ where the zero set of $u_n$ meets the boundary of the upper right quadrant $Q^{\llcorner}$ remains bounded as $n$ tends to infinity. 
\begin{lemma}
\label{lema compac 1} Under the above assumptions, the sequence $(p_n)_{n\geq 0}$ remains bounded, and hence converges. 
\end{lemma}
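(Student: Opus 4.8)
The plan is to derive a contradiction from the assumption that $|p_n| \to \infty$ along a subsequence, using the balancing formulae of Section 4 together with the monotonicity of $u_n$ in $Q^{\llcorner}$. First I would set up coordinates: since the nodal set $\mathcal Z_n$ is, by monotonicity, a graph over one of the two axes, the point $p_n$ lies either on the positive $x$-axis or on the positive $y$-axis. Passing to a subsequence we may assume it always lies, say, on the positive $x$-axis, so $p_n = (a_n, 0)$ with $a_n \to +\infty$ (the other case being symmetric, or handled by Remark~\ref{re:1}). The key point is that as $a_n \to \infty$, the nodal set of $u_n$ recedes from the origin, so on any fixed compact set $u_n$ is bounded away from $0$; combined with Lemma~\ref{le:2.2}, $u_n \to -1$ uniformly on compacts (the sign being fixed by $u_n < 0$ on the positive $x$-axis near the origin, since $u_n$ tends to $-1$ along the $x$-axis and $\partial_x u_n<0$).

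Next I would feed this into the balancing identity \eqref{eq:bc2}, which reads
\[
c_0 \, \sin\theta_n = \int_{y=0,\ x>0} \left( \tfrac12 |\partial_x u_n|^2 + F(u_n)\right) dx .
\]
The integrand on the right is, by Lemma~\ref{le:2.2}, exponentially small in $\mathrm{dist}((x,0),\mathcal Z_n)$; since the nearest point of $\mathcal Z_n$ to the positive $x$-axis is essentially $p_n=(a_n,0)$ for $x$ not too large, and since for $x$ far beyond $a_n$ the point $(x,0)$ is again at large distance from $\mathcal Z_n$ (here one uses that $\mathcal Z_n$ near its end stays within bounded distance of the half-line $\lambda_n$ making an angle $\theta_n$ bounded away from $0$), the whole integral tends to $0$ as $a_n \to \infty$. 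This forces $\sin\theta_n \to 0$, i.e. $\theta_n \to 0$, contradicting the hypothesis $\theta_n \to \theta_* \in (0,\pi/2)$. The symmetric argument with \eqref{eq:bc1} handles the case $p_n$ on the positive $y$-axis, where instead one gets $\cos\theta_n \to 0$.

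The main obstacle I anticipate is making rigorous the claim that the integral in \eqref{eq:bc2} genuinely goes to zero — one needs a uniform (in $n$) control of the location of the nodal set $\mathcal Z_n$ relative to the positive $x$-axis, not just near the origin but all the way out to infinity. Near the origin this follows from $|p_n|\to\infty$; in the far region it follows from the Refined Asymptotics Theorem (Theorem 2.1 in \cite{dkp-2009}), which says $\mathcal Z_n$ is asymptotic to $\lambda_n$, an oriented half-line of angle $\theta_n$ which, for $n$ large, lies in a compact subinterval of $(0,\pi/2)$; hence the distance from $(x,0)$ to $\mathcal Z_n$ grows linearly in $x$ with a slope bounded below uniformly in $n$. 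The intermediate regime is controlled by the fact that $\mathcal Z_n\cap Q^{\llcorner}$ is a monotone graph, so it cannot oscillate back toward the $x$-axis. Once these three regimes are patched together, dominated convergence (with an $n$-independent exponentially decaying majorant) gives the vanishing of the integral, and the balancing formula closes the argument. Finally, boundedness of $(p_n)$ together with the already-established uniform convergence on compacts of $u_n$ yields convergence of $(p_n)$ after passing to a further subsequence, and in fact of the whole sequence since the limit $p_*$ is the unique point $\mathcal Z_*\cap\partial Q^{\llcorner}$ of the limit solution.
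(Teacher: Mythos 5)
Your plan has the right skeleton (argue by contradiction, invoke the balancing identity \eqref{eq:bc1} or \eqref{eq:bc2}, read off the angle from the limiting integral), but the central estimate is backwards: the boundary integral does \emph{not} tend to $0$ as $|p_n|\to\infty$; it tends to $c_0$. If $p_n=(a_n,0)$ with $a_n\to\infty$, the nodal set $\mathcal Z_n$ actually \emph{crosses} the positive $x$-axis at $x=a_n$, so the distance from $(x,0)$ to $\mathcal Z_n$ is zero (not large) at $x=a_n$ and remains $O(1)$ in a fixed-width window around it. On that window the integrand $\tfrac12|\partial_x u_n|^2+F(u_n)$ is bounded below, not exponentially small; your argument treats only the two regimes $x\ll a_n$ and $x\gg a_n$ and silently drops the transition layer, which is precisely where the entire mass of the integral sits. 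What one really needs is the opposite, a \emph{lower} bound: translating by $p_n$, the sequence $u_n(\cdot+p_n)$ is monotone in one variable and converges (after using Lemma~\ref{le:4.1} to rule out the trivial limit and the De Giorgi theorem in dimension $2$ of Ghoussoub--Gui) to a one-dimensional heteroclinic; its energy along the axis contributes at least $c_0-\varepsilon$ to the right-hand side of \eqref{eq:bc2}, giving $c_0\sin\theta_*\geq c_0$, i.e.\ $\theta_*\geq\pi/2$, which contradicts $\theta_*<\pi/2$. (The case $p_n$ on the $y$-axis uses \eqref{eq:bc1} and gives $\cos\theta_*\geq1$, contradicting $\theta_*>0$.) That is the paper's proof; your ``integral vanishes, hence $\sin\theta_n\to0$'' step would need to be replaced by this translate-and-apply-De-Giorgi lower bound.

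A secondary, smaller point: with $p_n=(a_n,0)$, monotonicity of $u_n$ along the positive $x$-axis forces $u_n(x,0)>0$ for $0<x<a_n$, so on compacts $u_n\to +1$, not $-1$ as you state; this sign slip does not affect the structure of the argument but signals the same confusion about where the transition layer lies.
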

\begin{proof}
We argue by contradiction and for example assume that, up to a subsequence, $p_n = (0, y_n)$ with $\lim_{n\to \infty} y_n =+\infty$. 

\medskip

We define 
\[
w_{n} (x,y) : = u_{n} ( x ,y +y_n) ,
\]
Standard arguments involving elliptic estimates and Ascoli-Arzela's Theorem imply that, up to a subsequence, the sequence $(w_n)_{n\geq 0}$ converges uniformly on compacts to some function $w$ which is defined on $\mathbb R^2$ and which is again a solution of (\ref{AC}).  Since $\partial_y u_{n}>0$ for $y>0$, we conclude that 
\[
\partial_y w \geq 0
\]
in $\mathbb R^2$. Moreover, $w (  0,0)  =0$ and $\partial_x w(0,y) =0$ for all $y\in \mathbb R$  since  $u_{n}$ is even with respect to the $y$-axis.

\medskip

Observe that, since $w_n$ is not identically equal to $0$, we may apply Lemma \ref{le:4.1} to conclude that $w$ is not identically equal to $0$. Indeed, thanks to (\ref{mon}) we know that the zero set of $u_n$ is a graph over the $x$-axis for some function which is increasing and $u_n <0$ in the set $B_n := \{(x,y) \in \mathbb R^2 \, : \, |y|< y_n\}$. In particular, provided $n$ is chosen large enough, discs of arbitrary large radii can be inserted in $B_n$ and Lemma~\ref{mon} implies that  $|u_n^2-1| \leq 1/2$ in $\{(x,y) \in \mathbb R^2 \, : \, |y|< y_n - \rho_{1/2}\}$.

\medskip

Therefore, $w$ is bounded, monotone increasing in $y$, even with respect to the $y$-axis and according to De Giorgi's conjecture in dimension $2$ which is proven in \cite{MR1637919}, we conclude that 
\[
w (x,y)  = H (y) .
\]

Now, we use (\ref{eq:bc1}) which tells us that 
\[
c_0 \, \cos \theta_n  = \int_{x=0, \, y>0}    \left( \frac12 |\partial_y u_n|^2  + F(u_n) \right) \, dy  .
\]
Passing to the limit as $n$ tends to infinity, we conclude that 
\[
c_0 \, \cos \theta_*  = \lim_{n \to \infty} \int_{x=0, \, y>0}    \left( \frac12 |\partial_y u_n|^2  + F(u_n) \right) \, dy  .
\]
However, given $y_* >0$, 
\[
\begin{array}{rlll}
\displaystyle \int_{x=0, \, y>0}    \left( \frac12 |\partial_y u_n|^2  + F(u_n) \right) dy &  \geq \displaystyle \int_{x=0, \, y-y_n \in [-y_*, y_*]}  \left( \frac12 |\partial_y u_n|^2  + F(u_n) \right)  dy \\[3mm]
& = \displaystyle  \int_{x=0, \, y\in [-y_*,y_*]}    \left( \frac12 |\partial_y w_n|^2  + F(w_n) \right) dy ,
\end{array}
\]
for all $n$ large enough so that $y_n- y_* >0$.  Passing to the limit as $n$ tends to infinity, we conclude that 
\[
\lim_{n \to \infty} \int_{x=0, \, y>0}    \left( \frac12 |\partial_y u_n|^2  + F(u_n) \right) \, dy \geq  \int_{[-y_*,y_*]}    \left( \frac12 |H'(s)|^2  + F(H(s)) \right) \, ds .
\]
Hence
\[
c_0 \, \cos \theta_* \geq   \int_{[-y_*,y_*]}    \left( \frac12 |H'(s)|^2  + F(H(s)) \right) \, ds .
\]
Since $y_*$ can be chosen arbitrarily large, we get
\[
c_0 \, \cos \theta_* \geq   \int_{\mathbb R}   \left( \frac12 |H'(s)|^2  + F(H(s)) \right) \, ds = c_0 ,
\]
which is clearly in contradiction with the fact that $\theta_* >0$. If $p_n = (x_n,0)$ with $x_n$ tending to infinity, similar arguments using (\ref{eq:bc2}) and the fact that $\theta_*<\pi/2$. This completes the proof of the result. 
\end{proof}

\medskip

Thanks to the above Lemma, we know that $u_*$ is not identically constant equal to $0$ or $\pm 1$. Therefore, according to Theorem 4.4 in \cite{MR2381198} we know that the nodal set of $u_{\ast}$ in the upper right quadrant $Q^{\llcorner}$ must be a asymptotically a straight line which is not parallel to the $x$-axis nor to the $y$-axis. The Refined Asymptotic Theorem (Theorem 2.1 in \cite{dkp-2009}) then implies that $u_{\ast}$ is a $4$-ended solution. Some comment is due in the way  the Refined Asymptotic Theorem (Theorem 2.1 in \cite{dkp-2009}) is used. Indeed, in the statement of this result, one start with a solution of (\ref{AC}) which differs from the model heteroclinic solution sharing the same end by some $W^{2,2}$ function. Nevertheless, close inspection of the proof of Theorem 2.1 in \cite{dkp-2009} shows that the result remains valid provided we start from a solution which is asymptotic to the model heteroclinic solution in the $L^\infty$ sense, and this is precisely the situation in which we need the result.

\medskip

To proceed, we prove that  we have a uniform control on the nodal set of $u_n$ away from a compact set. Again, the balancing formul{\ae} will play an important role in the control of the nodal set of $u_n$. 

\medskip

To fix the ideas, we assume that the nodal set of $u_{n}$ in the upper right quadrant $Q^{\llcorner}$ is the graph of a function $y=f_{n} ( x)$. Since $u_n$ is a $4$-ended solution and given the notations introduced at the beginning of this section, we know that $f_n$ is asymptotic to the affine function given by
\[
\tilde f_n (x) : =  \tan \theta_n \, x +  \frac{r_n}{\cos \theta_n}.
\]
In particular, given $\delta >0$ and $n\geq 0$, there exists $x_{n,\delta}>0$ such that  
\[
| f_{n}^{\prime} (x)  - \tan \theta_n |  < \delta ,
\]
for all $x \geq x_{n,\delta}$. In the next Lemma, we prove that $x_{n,\delta}$ can be chosen to be independent of $n\geq 0$. In other words, this provides a uniform control on the derivative of $f_n$ away from a compact set.
\begin{lemma}
\label{twist} For all $\delta>0,$ there exists $x_\delta >0$ such that
\[
| f_{n}^{\prime} (x)  - \tan \theta_n |  < \delta ,
\]
for all $n\geq 0$ and for all $x \geq x_\delta$.
\end{lemma}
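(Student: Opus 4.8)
The plan is to argue by contradiction using a blow-down/compactness argument, exploiting the monotonicity of $u_n$ in $Q^{\llcorner}$ together with the balancing formula \eqref{eq:bc1}--\eqref{eq:bc2}. Suppose the conclusion fails: then there exist $\delta>0$, a subsequence (still denoted $u_n$) and points $x_n\to +\infty$ such that $|f_n'(x_n)-\tan\theta_n|\ge\delta$. By Lemma~\ref{le:2.2}, the nodal set of $u_n$ is, near $(x_n,f_n(x_n))$, a smooth curve with curvature and higher derivatives controlled by the distance from where $|u_n^2-1|$ is not yet close to $1$; more precisely, elliptic estimates give uniform $\mathcal C^2$ bounds on $u_n$, hence the graph function $f_n$ has locally uniformly bounded derivatives on the portion of $Q^{\llcorner}$ at bounded distance from the nodal set. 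This is the input that makes the compactness step work.

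\medskip

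The key steps, in order, would be: (1) Translate: set $q_n := (x_n, f_n(x_n))$, which lies on $\mathcal Z_n$, and define $w_n(x,y) := u_n\big((x,y) + q_n\big)$. Since $|u_n|<1$ and elliptic estimates give uniform $\mathcal C^2$ bounds, Ascoli--Arzel\`a yields a subsequence converging in $\mathcal C^1_{loc}$ to a solution $w$ of \eqref{AC} with $w(0,0)=0$; in particular $w\not\equiv \pm 1$. (2) Inherit monotonicity: because $x_n\to+\infty$, for any fixed compact the translated domain eventually lies inside $Q^{\llcorner}$, so $\partial_x w\le 0$ and $\partial_y w\ge 0$ on all of $\mathbb R^2$. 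A monotone (in the $y$-direction, say) bounded solution of the Allen--Cahn equation in $\mathbb R^2$ which is not constant is, by the de Giorgi conjecture in dimension $2$ \cite{MR1637919}, a heteroclinic solution: $w(x,y) = H\big((x,y)\cdot\mathtt{e}_*^\perp - r_*\big)$ for some unit vector $\mathtt{e}_* = (\cos\theta_*^\sharp,\sin\theta_*^\sharp)$. (Here I use that $w$ is monotone in \emph{two} independent directions, so it is monotone in some fixed direction, and the level set of $w$ through the origin is a line.) (3) Identify the angle: the nodal set of $w$ through the origin is the limit (in $\mathcal C^1_{loc}$, using the uniform $\mathcal C^2$ bound on $f_n$) of the translated graphs $\{y = f_n(x+x_n) - f_n(x_n)\}$; its slope at the origin is therefore $\lim_n f_n'(x_n)$, and the assumption $|f_n'(x_n)-\tan\theta_n|\ge\delta$ with $\theta_n\to\theta_*$ forces this slope to differ from $\tan\theta_*$ by at least $\delta$, i.e. $\theta_*^\sharp \ne \theta_*$. (4) Contradiction via balancing: the blow-down limit $w$ has a \emph{single} line as nodal set with angle $\theta_*^\sharp$, and by \eqref{eq:bc1}, localized as in the proof of Lemma~\ref{lema compac 1}, one gets a lower bound $c_0\,\cos\theta_* = \lim_n \int_{x=0,y>0}(\tfrac12|\partial_y u_n|^2 + F(u_n))\,dy$ together with a corresponding contribution of size $c_0\,\cos\theta_*^\sharp$ coming from the strip near $q_n$ projected onto the $y$-axis; combined with the analogous statement for the $x$-axis via \eqref{eq:bc2}, one sees that a second asymptotic direction $\theta_*^\sharp\ne\theta_*$ would force the total ``energy flux'' through $\partial Q^{\llcorner}$ to exceed what \eqref{eq:bc1}--\eqref{eq:bc2} allow. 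More directly: if the tangent direction to $\mathcal Z_n$ at $x_n$ stays away from $\theta_n$ for a sequence $x_n\to\infty$, then $f_n$ oscillates, and integrating $u_n$'s energy density over $\{x=0,y>0\}$ picks up, in the limit, strictly more than $c_0\cos\theta_*$ once one also accounts for the genuine end at angle $\theta_*$; but \eqref{eq:bc1} pins the limit to be \emph{exactly} $c_0\cos\theta_*$, a contradiction.

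\medskip

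\textbf{The main obstacle} I expect is Step (4): turning the existence of a spurious asymptotic direction $\theta_*^\sharp$ into a quantitative contradiction with the balancing identities. One has to localize the line integrals in \eqref{eq:bc1}--\eqref{eq:bc2} correctly and be careful that the ``extra'' contribution near $q_n$ genuinely adds to — rather than cancels with — the contribution from the true end; this requires knowing that $f_n$, if its derivative departs from $\tan\theta_n$, must in fact turn back (since $f_n$ is asymptotically affine with slope $\tan\theta_n$), so there are intervals where $f_n' > \tan\theta_n + \delta$ \emph{and} intervals where $f_n' < \tan\theta_n - \delta'$, forcing the graph to cover a range of $y$-values strictly larger than that of the affine model, which is exactly what inflates the integral in \eqref{eq:bc1} beyond $c_0\cos\theta_*$. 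Alternatively, one may avoid the balancing formula here and argue purely from the blow-down limit being the union of the correct end line plus a second, differently-oriented line — impossible for a single heteroclinic — but making the two pieces glue into one connected nodal curve (which the graph property over the $x$-axis forces) already yields the contradiction. Step (3), the $\mathcal C^1_{loc}$ convergence of the nodal graphs, is routine given Lemma~\ref{le:2.2} and the implicit function theorem, and Step (2)'s appeal to the de Giorgi result is standard, as in Lemma~\ref{lema compac 1}.
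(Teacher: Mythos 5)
Your strategy (blow down at the bad point, invoke de Giorgi to identify the limit as a heteroclinic with a spurious slope, then contradict the balancing identity) is the same one the paper uses, and your diagnosis that Step~(4) is the real obstacle is exactly right. But the resolution you sketch for Step~(4) does not work, and the two specific devices that make the paper's argument go through are missing from your proposal. First, rather than taking an arbitrary sequence $x_n\to\infty$ with $|f_n'(x_n)-\tan\theta_n|\ge\delta$, the paper picks $\bar x_k\ge x_k$ to be the \emph{supremum} of the $x$ where the slope departs by $\delta_*$ (well defined since $f_{n_k}'(x)\to\tan\theta_{n_k}$). This gives the crucial one-sided control $\sup_{x\ge\bar x_k}|f_{n_k}'(x)-\tan\theta_{n_k}|\le\delta_*$, which is what makes the nodal set stay in a fixed cone past $\bar x_k$ and hence yields \emph{uniform} exponential decay estimates along the contour chosen in the next step. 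Second, and more importantly, the balancing identity is not applied on $Q^{\llcorner}$ at all: it is applied on a subdomain $D_k\subset Q^{\llcorner}$ obtained by cutting off a triangle near the origin with a segment of slope $-1$ passing through the anomalous point $(\bar x_k,\bar y_k)$. Since $\Xi(\partial_x,u_{n_k})$ is divergence free, the flux through $\partial Q^{\llcorner}$ equals that through $\partial D_k$, i.e.\ $c_0\cos\theta_{n_k}=\int_{\partial D_k}\Xi\cdot\nu$, and the uniform decay estimate localizes the right-hand side near $(\bar x_k,\bar y_k)$, forcing it to converge to $c_0\cos\tilde\theta_*$ with $\tan\tilde\theta_*=\lim_k f_{n_k}'(\bar x_k)$. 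Equating $\cos\theta_*=\cos\tilde\theta_*$ contradicts $|\tan\theta_*-\tan\tilde\theta_*|=\delta_*$.

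By contrast, both of the concrete mechanisms you propose in Step~(4) fail. The claim that oscillation of $f_n$ ``inflates the integral in \eqref{eq:bc1} beyond $c_0\cos\theta_*$'' cannot be correct: \eqref{eq:bc1} is an \emph{exact} identity, $c_0\cos\theta_n=\int_{x=0,\,y>0}(\tfrac12|\partial_y u_n|^2+F(u_n))\,dy$, valid for every $n$ regardless of how $f_n$ behaves in the interior; there is no extra contribution from the anomalous slope because the flux through the $y$-axis is insensitive to it. Your second alternative, that the blow-down should be ``the union of the correct end line plus a second, differently-oriented line'', is also not available: the blow-down at $q_n$ converges to a \emph{single} heteroclinic (a single line), and nothing in the limit records the global end direction $\theta_*$ simultaneously. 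The information about $\theta_*$ has to be injected from outside the blow-down limit, and the device that does this is precisely the choice of the boundary contour $\partial D_k$ passing through the anomalous point. Without it, your argument stops short of a contradiction.
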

\begin{proof}
We now argue by contradiction. Observe that the result is true if we restrict our attention to a finite number of the $u_n$. Hence, if the result were not true, there would exist $\delta_* >0$ and sequences  $(x_k)_{k\geq 0}$ and $(n_k)_{k\geq 0} $ both tending to infinity such that 
\[
\sup_{x \geq x_k} | f_{n_k}^{\prime} (x)  - \tan \theta_{n_k} | \geq \delta_* ,
\]

We define $\bar x_k\geq x_k$ to be the supremum of the $x>0$ such that $|f_{n_k}^{\prime} (x)  - \tan \theta_{n_k} | \geq \delta_*$. Observe that $\bar x_k$ is well defined since 
\[
\lim_{x \to \infty} f_{n_k}^{\prime} (x) = \tan \theta_{n_k}.
\] 
By definition, we have 
\begin{equation}
| f_{n_k}^{\prime} (\bar x_k)  - \tan \theta_{n_k} | = \delta_* ,
\label{eq:pe}
\end{equation}
and 
\begin{equation}
\sup_{x\geq \bar x_k} | f_{n_k}^{\prime} (x)  - \tan \theta_{n_k} | \leq \delta_* .
\label{eq:se}
\end{equation}

Moreover, the sequence $(\bar x_k)_{k \geq 0}$ tends to infinity as $k$ tends to infinity and, if we define 
\[
\bar y_k : = f_{n_k} (\bar x_{k}), 
\] 
we find that the sequence $(\bar y_k)_{k\geq 0}$ also tends to infinity as $k$ does. This latter fact is just a consequence of the fact that $(u_n)_{n\geq 0}$ converges on compacts to $u_*$ which is a $4$-ended solution and hence the zero set of $u_*$ is the graph of a function which tends to infinity at infinity.

\medskip

We now consider the domain $D_k$ of $Q^{\llcorner}$ which contains the graph of $f_{n_k}$ for $x$ large enough and which is bounded by the half line $t \mapsto (0, \bar y_k- \bar x_k+t)$ for $t>0$, the segment joining $(0, \bar y_k - \bar x_k)$ to $(\bar x_k + \bar y_k,0)$ and the half line $t\mapsto (\bar x_k + \bar y_k + t, 0)$ for $t>0$. Observe that $\partial D_k$ contains the point $(\bar x_k, \bar y_k)$.  Applying the analysis of Section 4, we conclude that
\[
\int_{\partial Q^{\llcorner}} \Xi (\partial_x, u_{n_k})  \, \nu \, ds =  \int_{\partial D_k} \Xi (\partial_x, u_{n_k})  \cdot \nu \,  ds ,
\]
where $\nu$ denotes the outward pointing normal vector to the sets $Q^{\llcorner}$ and $D_k$. Using (\ref{eq:bc1}), we conclude that
\[
c_0 \, \cos \theta_{n_k}  =  \int_{\partial D_k} \Xi (\partial_x, u_{n_k})  \cdot \nu  \,  ds .
\]

Observe that, up to a subsequence, the sequence of functions 
\[
(x,y) \mapsto u_{n_k} (x+ \bar x_k, y + \bar y_k),
\]
converges, uniformly on compacts, to the heteroclinic solution whose zero set is the line passing through the origin, of slope $\lim_{k\to \infty} f'_{n_k} (\bar x_{k})$. Moreover, thanks to (\ref{eq:se}) we see that there exist constants $C>0$ and $\beta >0$, independent of $k\geq 0$, such that 
\[
|u^2_{n_k} ({\tt x}) -1|Ê+ |\nabla u_{n_k} ({\tt x})|\leq C \, e^{-\beta |{\tt x} - \bar {\tt x}_k|},
\]  
for all ${\tt x} \in \partial D_k$, where $\bar {\tt x}_{k} := (\bar x_k, \bar y_k)$. This property, together with the result of Lemma~\ref{le:2.2} allows one to conclude that 
\[
\lim_{k\to \infty}  \int_{\partial D_k} \Xi (\partial_x,u_{n_k})  \cdot \nu  \,  ds  =  c_0 \, \cos \tilde \theta_*,
\]
where $\tilde \theta_*$ is defined by 
\[
\tan \tilde \theta_* = \lim_{k\to \infty} f'_{n_k} (\bar x_k).
\]
This is clearly in contradiction with (\ref{eq:pe}) which implies that $|\tan \theta_*- \tan \tilde \theta_*|=\delta_*$. This completes the proof of the result.
\end{proof}

As a consequence, we have the~:
\begin{lemma}
\label{angle} 
Under the above assumptions, we have $\lim_{n\to \infty}\mathcal P (u_n) = \mathcal P (u_*)$. 
\end{lemma}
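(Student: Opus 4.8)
\textbf{Proof plan for Lemma~\ref{angle}.}
The plan is to reduce the convergence of the angles to the balancing formula \eqref{eq:bc1} together with the uniform control on the nodal set obtained in Lemma~\ref{twist}. Recall that the balancing identity gives, for every $n$,
\[
c_0 \, \cos \theta_n = \int_{x=0,\,y>0} \left( \tfrac12 |\partial_y u_n|^2 + F(u_n)\right) dy,
\]
and similarly $c_0\,\cos\theta_* = \int_{x=0,\,y>0}\left(\tfrac12|\partial_y u_*|^2 + F(u_*)\right)dy$, since we have already established that $u_* \in \mathcal M_4^{even}$ with asymptotic angle some $\theta_*' := \pi/4 + \mathcal P(u_*)$; a priori one only knows $\theta_*'$ is the true angle of $u_*$ and $\theta_* = \lim \theta_n$, and the content of the lemma is that $\theta_*' = \theta_*$. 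So it suffices to prove that the integral over the positive $y$-axis of the energy density of $u_n$ converges to that of $u_*$.

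First I would show the integrands converge pointwise: since $u_n \to u_*$ uniformly on compacts and, by elliptic estimates, $\nabla u_n \to \nabla u_*$ uniformly on compacts, the density $\tfrac12|\partial_y u_n|^2 + F(u_n)$ converges to $\tfrac12|\partial_y u_*|^2 + F(u_*)$ uniformly on each bounded segment of the $y$-axis. The only issue is the tail $y \geq Y$ for large $Y$: I must produce a uniform (in $n$) integrable majorant there. This is exactly where Lemma~\ref{twist} enters. Fix $\delta \in (0, \pi/4 - |\mathcal P(u_*)|)$ small, say $\delta$ small enough that $\tan(\theta_* - \delta) > 0$; by Lemma~\ref{twist} there is $x_\delta$, independent of $n$, with $|f_n'(x) - \tan\theta_n| < \delta$ for $x \geq x_\delta$, and (since $\theta_n \to \theta_* \in (0,\pi/2)$) there is $n_0$ and a constant $\kappa > 0$ with $\tfrac12 \kappa^{-1} \leq f_n'(x) \leq \kappa$ for all $n \geq n_0$ and $x \geq x_\delta$. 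Hence, for $n \geq n_0$, the nodal graph $y = f_n(x)$ of $u_n$ in $Q^{\llcorner}$ satisfies, for $x\geq x_\delta$, $\kappa^{-1}(x - x_\delta) \leq f_n(x) - f_n(x_\delta) \leq \kappa(x-x_\delta)$ and, since $f_n(x_\delta)$ stays bounded (by Lemma~\ref{lema compac 1} and Lemma~\ref{twist}, $f_n$ is uniformly Lipschitz on $[0,x_\delta]$ with bounded value at $0$), the distance from a point $(0,y)$ on the $y$-axis with $y$ large to $\mathcal Z_n$ is bounded below by $c\,y$ for some $c>0$ independent of $n \geq n_0$. By Lemma~\ref{le:2.2} this yields $|u_n(0,y)^2 - 1| + |\nabla u_n(0,y)| \leq C\,e^{-\alpha c\, y}$ for all $y$ large and all $n \geq n_0$, which gives the required uniform exponential majorant $\tfrac12|\partial_y u_n|^2 + F(u_n) \leq C' e^{-2\alpha c\, y}$ on the tail.

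With the uniform majorant in hand, dominated convergence gives
\[
\lim_{n\to\infty} \int_{x=0,\,y>0}\left(\tfrac12|\partial_y u_n|^2 + F(u_n)\right)dy = \int_{x=0,\,y>0}\left(\tfrac12|\partial_y u_*|^2 + F(u_*)\right)dy,
\]
hence $c_0 \cos\theta_* = c_0 \cos\theta_*'$ (after also passing to the limit on the left via $\theta_n \to \theta_*$), and since both angles lie in $(0,\pi/2)$ where cosine is injective, $\theta_* = \theta_*'$, i.e. $\lim_{n\to\infty}\mathcal P(u_n) = \mathcal P(u_*)$. The main obstacle is the tail estimate: making the lower bound on $\mathrm{dist}((0,y),\mathcal Z_n)$ genuinely uniform in $n$ requires combining three ingredients — the uniform slope control from Lemma~\ref{twist} (away from a fixed compact), the boundedness of $p_n$ from Lemma~\ref{lema compac 1} (near the origin), and the uniform Lipschitz bound on the nodal graph from monotonicity \eqref{mon} — and one has to check the geometry carefully so that the constant $c$ in $\mathrm{dist}((0,y),\mathcal Z_n)\geq c\,y$ does not degenerate as $\theta_n \to \theta_*$; this is harmless precisely because $\theta_*\in(0,\pi/2)$ is interior, but it is the point that needs care.
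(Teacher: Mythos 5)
The paper gives no written proof of this lemma: after establishing Lemma~\ref{twist}, the authors simply write ``As a consequence, we have the:'' and state the result. The natural reading of the authors' intent is a direct exchange of limits on the slope of the nodal graph: by Lemma~\ref{twist}, the convergence $f_n'(x)\to\tan\theta_n$ as $x\to\infty$ is uniform in $n$; the convergence $u_n\to u_*$ in $\mathcal C^1_{\mathrm{loc}}$ forces $f_n'(x)\to f_*'(x)$ for each fixed $x$; and a standard iterated-limit argument (Moore--Osgood) then gives $\tan\theta_* = \lim_n\lim_x f_n'(x)=\lim_x\lim_n f_n'(x)=\lim_x f_*'(x)=\tan(\pi/4+\mathcal P(u_*))$, whence $\theta_*=\pi/4+\mathcal P(u_*)$.

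Your route is different but also valid: you pass to the limit in the balancing identity \eqref{eq:bc1} and use injectivity of $\cos$ on $(0,\pi/2)$. This is essentially a re-run of the balancing-formula technique already deployed in Lemma~\ref{lema compac 1} and Lemma~\ref{twist}, now with a uniform integrable majorant on the tail so that dominated convergence applies. What the direct slope argument buys is brevity and no need for the tail estimate at all; what your argument buys is that it stays entirely inside the framework used for the other compactness lemmas, and the uniform tail decay you isolate here is precisely what one wants later anyway (it is the mechanism behind Lemma~\ref{shift} and the $\mathcal S_4$-convergence).

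One detail in your tail estimate is stated incorrectly, though the conclusion you draw from it is right. You claim that ``by Lemma~\ref{lema compac 1} and Lemma~\ref{twist}, $f_n$ is uniformly Lipschitz on $[0,x_\delta]$.'' Lemma~\ref{twist} controls $f_n'$ only for $x\ge x_\delta$, and monotonicity gives no Lipschitz bound near $x=0$ where the graph may be nearly vertical; so uniform Lipschitz-ness on $[0,x_\delta]$ does not follow from the stated lemmas. What you actually need, and what is true, is merely that $f_n(x_\delta)$ stays bounded: this follows from the locally uniform $\mathcal C^1$-convergence $u_n\to u_*$ (which forces the nodal graphs to converge on any compact set, so $f_n(x_\delta)\to f_*(x_\delta)$), together with Lemma~\ref{lema compac 1} to pin down $p_n$. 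With that fix, your estimate $\mathrm{dist}((0,y),\mathcal Z_n)\ge c\,y$ for large $y$, uniformly in $n\ge n_0$, holds, and the rest of the argument goes through.
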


Since $u_n$ converges on compacts to $u_*$ which is a $4$-ended solution, we conclude, with the help of the previous Lemma that  the distance from a point ${\tt x} \in \mathcal Z_n$ to the $x$-axis and the $y$-axis, tends to infinity as $|{\tt x}|$ tends to infinity. We now prove a more quantitative version of this assertion in the~:
\begin{lemma}
\label{n3} There exists  constants $C >0$ and $\alpha>1$, such that
\[
\mathcal Z_n \cap Q^{\llcorner} \subset  \left\{ ( x,y )\in \mathbb R^2 \,  : \,  x>0, \, y>0, \quad \mbox{and} \quad \frac{x}{\alpha} - C\leq y\leq \alpha x +C \right\}  .
\]
\end{lemma}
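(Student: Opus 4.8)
The statement is, in effect, a corollary of Lemmas~\ref{twist} and \ref{angle} together with the convergence on compacts, so the plan is mainly to organize the uniformity in $n$. Following the convention of the preceding lemmas I would assume the nodal set of each $u_n$ in $Q^{\llcorner}$ is the graph $y=f_n(x)$ of a function; the case of a graph over the $y$-axis is identical after exchanging the roles of $x$ and $y$ and then enlarging the final constants so that the two conclusions take the stated form. Differentiating $u_n(x,f_n(x))=0$ and using $\partial_x u_n<0<\partial_y u_n$ in $Q^{\llcorner}$ gives $f_n'=-\partial_x u_n/\partial_y u_n>0$, so each $f_n$ is increasing; in particular $f_n(x)\ge f_n(0)\ge 0$, and by Lemma~\ref{lema compac 1} the value $f_n(0)$ stays bounded as $n\to\infty$.

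\emph{A uniform slope bound far out.} By Lemma~\ref{angle}, $\theta_n\to\theta_\ast\in(0,\pi/2)$, so there is $\alpha_1>1$ with $1/\alpha_1\le\tan\theta_n\le\alpha_1$ for every $n\ge 0$ (for large $n$ this follows from the limit; the remaining finitely many $\theta_n$ lie in $(0,\pi/2)$, hence have finite positive tangent, and are absorbed into $\alpha_1$). Applying Lemma~\ref{twist} with $\delta:=\frac{1}{2\alpha_1}$ produces $x_0>0$, \emph{independent of $n$}, with
\[
\frac{1}{\alpha}\le f_n'(x)\le \alpha\qquad\mbox{for all } x\ge x_0,\ n\ge 0,\qquad\mbox{where}\quad \alpha:=2\alpha_1,
\]
because $\tan\theta_n-\frac{1}{2\alpha_1}\ge\frac{1}{2\alpha_1}=\frac{1}{\alpha}$ and $\tan\theta_n+\frac{1}{2\alpha_1}\le\alpha_1+\frac{1}{2\alpha_1}\le 2\alpha_1=\alpha$.

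\emph{A uniform bound on $[0,x_0]$.} Next I would show $M:=\sup_{n\ge 0}f_n(x_0)<\infty$. Since the nodal set of $u_\ast$ in $Q^{\llcorner}$ is a graph and $\partial_y u_\ast>0$ there, fix $y_0>0$ with $u_\ast(x_0,y_0)>0$. As $u_n\to u_\ast$ uniformly on the compact segment $\{x_0\}\times[0,y_0]$, we get $u_n(x_0,y_0)>0$ for all $n$ large; since $y\mapsto u_n(x_0,y)$ is increasing on $[0,\infty)$ and vanishes exactly at $f_n(x_0)$, this forces $f_n(x_0)<y_0$. Enlarging $M$ to absorb the finitely many remaining $n$ proves the claim.

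\emph{Conclusion.} Put $C:=\max(M,x_0)$. For $x\ge x_0$, integrating the slope bound of the second step from $x_0$ to $x$ gives
\[
\frac{x}{\alpha}-C\le\frac{x-x_0}{\alpha}\le f_n(x)\le M+\alpha(x-x_0)\le \alpha x+C,
\]
while for $0\le x\le x_0$ one has $0\le f_n(x)\le M\le C$, hence trivially $x/\alpha-C\le 0\le f_n(x)\le C\le \alpha x+C$. Thus $x/\alpha-C\le f_n(x)\le \alpha x+C$ for all $x\ge 0$ and all $n$, which is exactly the claimed inclusion (with $\alpha$ replaced by $\max(\alpha,1)+1>1$ if one wants $\alpha>1$ strictly). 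The only genuine point is that $\alpha$, $x_0$ and $M$ are produced independently of $n$; the substantive analytic input — the uniform-in-$n$ control of the slope far out — has already been carried out in Lemma~\ref{twist}, so there is no real obstacle left in this lemma.
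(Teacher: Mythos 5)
Your proof is correct and follows exactly the route the paper indicates: the paper's own proof of Lemma~\ref{n3} is a two-sentence sketch (uniform slope control from Lemma~\ref{twist} outside a compact, plus uniform convergence on compacts to the $4$-ended limit $u_*$), and you have simply carried out these two steps explicitly, including the small bookkeeping of choosing $\alpha$ from the convergence $\theta_n\to\theta_*\in(0,\pi/2)$ and bounding $f_n$ on a fixed compact via $u_n\to u_*$.
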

\begin{proof}
According to Lemma~\ref{twist}, we have a uniform control on the slopes of the nodal sets of $u_n$ away from a tubular neighborhood of the $x$-axis and $y$-axis.  This means that these slopes are bounded away from $0$ and bounded independently of $n$. Next, in a ball of fixed radius, $u_{n}$ converges uniformly to $u_*$ and the result then follows at once.
\end{proof}

Recall that 
\[
\mathcal F (u_n) = (\theta_n-\pi/4, r_n).
\]
We set
\[
(\theta_*-\pi/4, r_*) := \mathcal F (u_*).
\]
Now that we have understood the behavior of the sequence $(\theta_n )_{n\geq 0}$, we turn to the behavior of the sequence $(r_n)_{n\geq 0}$, the other parameter which characterizes the asymptotic of the nodal set of $u_n$. We have the~:
\begin{lemma}
\label{shift} Under the above assumptions,  $\lim_{n \to \infty} r_{n}  = r_*$.
\end{lemma}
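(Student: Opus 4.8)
The plan is to extract $r_n$ from the rotational balancing formula \eqref{eq:bc3} and pass to the limit, using the uniform control on the nodal sets already established in Lemma~\ref{n3} (together with Lemma~\ref{twist}) to justify the interchange of limit and integration. First I would recall that, by \eqref{eq:bc3},
\[
c_0 \, r_n =   \int_{x=0, \, y>0}    \left( \tfrac12 |\partial_y u_n|^2  +F(u_n) \right)  y \, dy  -\int_{y=0, \, x>0} \left( \tfrac12 |\partial_x u_n|^2  +  F(u_n) \right) x \, dx ,
\]
and the analogous identity holds for $u_*$ with $r_n$ replaced by $r_*$. Since $u_n \to u_*$ uniformly on compacts and, by elliptic regularity, $\nabla u_n \to \nabla u_*$ uniformly on compacts, the integrands converge pointwise; the issue is purely one of a dominating function (uniform in $n$) so that dominated convergence applies on the unbounded rays $\{x=0,\ y>0\}$ and $\{y=0,\ x>0\}$, with the extra weight $y$ (resp.\ $x$) which makes the tail more delicate than in \eqref{eq:bc1}--\eqref{eq:bc2}.

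The key step is the domination. Along the $y$-axis, by Lemma~\ref{n3} the point $p_n=(0,y_n)$ on $\mathcal Z_n$ satisfies $y_n \le \alpha \cdot 0 + C = C$ is false in general — rather, the nodal set stays in the cone $\{x/\alpha - C \le y \le \alpha x + C\}$, so on the ray $x=0$ the only nodal point has $y$ bounded: precisely $0 \le y_n$ with $y_n \le C$ (taking $x=0$ in $y \le \alpha x + C$). Hence for $y \ge C+1$ the point $(0,y)$ lies at distance at least $\mathrm{dist}((0,y),\mathcal Z_n) \ge c\,(y - C)$ from $\mathcal Z_n$, uniformly in $n$, because the nodal set is trapped inside that fixed cone. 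Lemma~\ref{le:2.2} then gives
\[
|\partial_y u_n(0,y)|^2 + |\partial_x u_n(0,y)|^2 + F(u_n(0,y)) \le C' \, e^{-2\alpha' c (y-C)},
\]
for all $y \ge C+1$ and all $n$, and this bound times the weight $y$ is integrable and independent of $n$. The symmetric argument works on the $x$-axis: the nodal set being inside the same cone forces $\mathrm{dist}((x,0),\mathcal Z_n) \ge c\,(x-C)$, giving the matching exponential bound with weight $x$. Note the first estimate \eqref{eq:est-2}/Lemma~\ref{le:4.1} is what legitimizes using $|u_n|<1$ and the exponential decay uniformly. On the bounded part $y \le C+1$ (resp.\ $x \le C+1$) the integrands are uniformly bounded by elliptic estimates, so there is no issue there.

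With the dominating function in hand, dominated convergence yields
\[
\lim_{n\to\infty} \int_{x=0,\,y>0} \left(\tfrac12|\partial_y u_n|^2 + F(u_n)\right) y\,dy = \int_{x=0,\,y>0}\left(\tfrac12|\partial_y u_*|^2 + F(u_*)\right) y\,dy,
\]
and similarly for the $x$-axis integral; dividing the limiting balancing identity by $c_0 > 0$ gives $\lim_{n\to\infty} r_n = r_*$. The main obstacle is precisely the uniform exponential domination with the unbounded weights $x$ and $y$: one must be careful that the distance estimate from Lemma~\ref{n3} genuinely grows linearly along the coordinate axes (which it does, since the cone has a fixed opening angle bounded away from both $0$ and $\pi/2$), so that $y\,e^{-2\alpha' c(y-C)}$ and $x\,e^{-2\alpha' c(x-C)}$ are integrable uniformly in $n$. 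Once that is settled, the convergence of $r_n$ is routine. Together with Lemma~\ref{angle}, this also identifies $\mathcal F(u_*) = \lim_n \mathcal F(u_n)$, and combined with the exponential decay estimates it upgrades the convergence $u_n \to u_*$ to convergence in the topology of $\mathcal S_4$, completing the proof of Theorem~\ref{teo compactness}.
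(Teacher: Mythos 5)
Your proposal is correct and follows essentially the same route as the paper: extract $r_n$ from the rotational balancing identity \eqref{eq:bc3}, use Lemma~\ref{n3} to show that the distance from points on the coordinate axes to $\mathcal Z_n$ grows linearly and uniformly in $n$, invoke Lemma~\ref{le:2.2} for a uniform exponential dominating function (which beats the linear weight), and pass to the limit by dominated convergence together with local $\mathcal C^1$ convergence. The paper states this very tersely ("The key ingredients are Lemma~\ref{n3} and Lemma~\ref{le:2.2}\ldots"); your write-up simply makes the domination step explicit.
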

\begin{proof}
Again, the proof uses the balancing formula (\ref{eq:lsr}) but this time, we will use the vector field
\[
X=x\partial_y - y \partial_x .
\] 
Recall that (\ref{eq:bc3}) yields
\[
c_0 \, r_n =   \displaystyle \int_{x=0, \, y>0}    \left( \frac12 |\partial_y u_n|^2  +F(u_n) \right)  y \, dy  -\int_{y=0, \, x>0} \left( \frac12 |\partial_x u_n|^2  +  F(u_n) \right) x \, dx .
\]

The key ingredients are Lemma \ref{n3} and Lemma~\ref{le:2.2}, from which we get an exponential decay of the solution $u_n$ along the coordinate axis as $|{\tt x}|$ tends to infinity, the decay being uniform in $n\geq 0$. Once this decay is proven one uses the fact that $(u_n)_{n\geq 0}$ converges in $\mathcal C^1$ topology to $u_*$ uniformly on any given ball. 

\medskip

Using these remarks, one can pass to the limit as $n$ tends to infinity in the above equality to get to get
\[
c_0 \lim_{n \to \infty} r_n =   \displaystyle \int_{x=0, y>0}  \left( \frac12 |\partial_y u_*|^2  +F(u_*) \right)  y dy  -\int_{y=0, x>0} \left( \frac12 |\partial_x u_*|^2  +  F(u_*) \right) x  dx .
\]
Since the right hand side is equal to $c_0 r_*$, the proof is complete. \end{proof}

\medskip

At this point, we have shown that the sequence $(u_{n})_{n\geq 0}$ converges uniformly on compacts to $u_{\ast}$ and the ends of $u_n$ also converges to the end of $u_*$. However, this is not quite enough since our aim is to show the convergence of $(u_{n})_{n\geq 0}$ to $u_*$ in $\mathcal S_4$. 

\medskip

Recall that $\mathcal Z_n$, the zero set of $u_n$, is asymptotic to 
\[
\lambda_{n} := r_n \, {\tt e}_n^\perp + \mathbb R \, {\tt e}_n,
\]
in $Q^{\llcorner}$, where ${\tt e}_n = (\cos \theta_n, \sin \theta_n)$. We define $v_n$ in $Q^{\llcorner}$ by
\[
v_{n} \left( {\tt x} \right)  :=u_{n}\left(  \mathtt{x}\right)   - H \left(  {\tt x} \cdot {\tt e}^\perp_n -r_n \right)  .
\]
Then $\left\vert v_{n}\right\vert \to 0$ as $\left\vert \mathtt{x}\right\vert$ tends  to infinity in $Q^{\llcorner}$. 

\medskip

In the next Lemma, we prove that this convergence is in fact uniform in $n \geq 0$.
\begin{lemma}
As $|{\tt x}|$ tends to infinity, $\left\vert v_{n}(\mathtt{x})\right\vert $ converges to $0$ uniformly with respect to $n \geq 0$. 
\end{lemma}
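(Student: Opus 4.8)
The plan is to argue by contradiction and compactness; the substance of the proof is a \emph{uniform} control on the rate at which the nodal set $\mathcal Z_n\cap Q^{\llcorner}$ approaches its asymptotic line $\lambda_n$. Concretely, the key intermediate claim is
\begin{equation}
\sup_{n\ge 0}\ \sup\Big\{\,\mathrm{dist}(\mathtt{x},\lambda_n)\ :\ \mathtt{x}\in\mathcal Z_n\cap Q^{\llcorner},\ |\mathtt{x}|\ge T\,\Big\}\ \longrightarrow\ 0\qquad(T\to\infty).
\tag{$\star$}
\end{equation}
Notice that all the information already obtained — the uniform slope bound of Lemma~\ref{twist}, the convergences $\theta_n\to\theta_*$ and $r_n\to r_*$ (Lemmas~\ref{angle} and \ref{shift}), and the $\mathcal C^1_{loc}$ convergence $u_n\to u_*$ — is compatible with the graph of $\mathcal Z_n$ drifting \emph{linearly} away from $\lambda_n$, so that $(\star)$ genuinely requires a quantitative argument.

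Granting $(\star)$, I would finish as follows. On $\{\mathtt{x}\in Q^{\llcorner}:\ |\mathtt{x}|\ge T,\ \mathrm{dist}(\mathtt{x},\mathcal Z_n)\ge 2\}$, Lemma~\ref{le:2.2} gives $|u_n(\mathtt{x})^2-1|\le C\,e^{-2\alpha}$ with $C,\alpha$ universal, while $(\star)$ (for $T$ large) forces $\mathtt{x}$ to lie at distance $\ge 2-\varepsilon$ from $\lambda_n$ and on the same side of $\lambda_n$ as of $\mathcal Z_n$; hence $u_n(\mathtt{x})$ and $H(\mathrm{dist}^s(\mathtt{x},\lambda_n))$ are both within $C\,e^{-\alpha}$ of the same value $\pm1$, so $|v_n(\mathtt{x})|$ is uniformly small there. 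In the complementary tube $\{\mathrm{dist}(\mathtt{x},\mathcal Z_n)<2\}$ far out, I would argue by contradiction once more: if $|v_{n_k}(\mathtt{x}_k)|\ge\varepsilon_0$ with $|\mathtt{x}_k|\to\infty$, I pick $\mathtt{z}_k\in\mathcal Z_{n_k}$ nearest to $\mathtt{x}_k$ and translate, $w_k:=u_{n_k}(\cdot+\mathtt{z}_k)\to w$ in $\mathcal C^2_{loc}$; since both coordinates of $\mathtt{z}_k$ tend to $+\infty$ by Lemma~\ref{n3}, $w$ is monotone in $x$ and in $y$ and $w(0,0)=0$, so by De Giorgi's theorem in dimension two (\cite{MR1637919}) $w$ is a heteroclinic solution, and its nodal line has slope $\tan\theta_*$ by Lemma~\ref{twist}; but $(\star)$ says $\mathrm{dist}^s(\mathtt{z}_k,\lambda_{n_k})\to 0$, so the translated $H(\mathrm{dist}^s(\cdot,\lambda_{n_k}))$ converges to the \emph{same} heteroclinic, whence $v_{n_k}(\mathtt{x}_k)\to 0$, a contradiction. (If instead $(n_k)$ has a bounded subsequence, we are reduced to a fixed $4$-ended solution and the conclusion is the refined asymptotics recalled just before the statement.)

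It remains to prove $(\star)$, which I would do with the rotational balancing formula of Lemma~\ref{le:PI}, in the spirit of the proof of Lemma~\ref{twist}. Write $\Sigma_R:=\{\mathtt{x}\cdot\mathtt{e}_n=R\}\cap Q^{\llcorner}$ and apply $\mathrm{div}\,\Xi(x\partial_y-y\partial_x,u_n)=0$ on the triangle bounded by $\partial Q^{\llcorner}$ and $\Sigma_R$; proceeding exactly as in the derivation of (\ref{eq:bc3}), but keeping $R$ finite, one gets
\[
\int_{\Sigma_R}\Xi(x\partial_y-y\partial_x,u_n)\cdot\mathtt{e}_n\, ds\ =\ -\,c_0\,r_n\ +\ O\big(R\,e^{-\beta R}\big),
\]
the error being the tails of the two (exponentially decaying, by Lemmas~\ref{le:2.2} and \ref{n3}) half-axis integrals and being uniform in $n$. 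On the other hand $\Sigma_R$ meets $\mathcal Z_n$ at a single point $\mathtt{z}_n(R)$ for $R$ large, and by Lemma~\ref{le:2.2} together with the uniform slope bound of Lemma~\ref{twist} the solution $u_n$ is, near $\mathtt{z}_n(R)$, $\mathcal C^2$-close (uniformly in $n$, with error tending to $0$ as $R\to\infty$) to the heteroclinic solution whose nodal line is $\mathtt{z}_n(R)+\mathbb R\,\mathtt{e}_n$. Plugging this into the integral and using the Hamiltonian identity $\tfrac12(H')^2=F(H)$ — which is precisely what makes $\Xi(x\partial_y-y\partial_x,\cdot)$ integrable across an end despite the linear growth of the Killing field — the same computation as for the model heteroclinic gives
\[
\int_{\Sigma_R}\Xi(x\partial_y-y\partial_x,u_n)\cdot\mathtt{e}_n\, ds\ =\ -\,c_0\,\big(\mathtt{z}_n(R)\cdot\mathtt{e}_n^\perp\big)\ +\ \varepsilon_n(R),\qquad \sup_n|\varepsilon_n(R)|\to 0 .
\]
Comparing the two expressions and using $r_n\to r_*$ (Lemma~\ref{shift}) yields $\mathrm{dist}^s(\mathtt{z}_n(R),\lambda_n)=\mathtt{z}_n(R)\cdot\mathtt{e}_n^\perp-r_n\to 0$ uniformly in $n$, which — since every point of $\mathcal Z_n\cap Q^{\llcorner}$ of large modulus is such a $\mathtt{z}_n(R)$ with $R$ large, by Lemma~\ref{n3} — is exactly $(\star)$.

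The main obstacle is $(\star)$. Beyond the use of the balancing formula, two points need care: first, for the rotational Killing field the Noether current $\Xi$ does \emph{not} decay along an end (unlike for $\partial_x,\partial_y$), so one must keep track of its ``flux at infinity along the nodal line'' and use the Hamiltonian identity to see that this flux is finite and equals $-c_0 r_n$; second, one needs the statement that near a point of $\mathcal Z_n$ whose $\mathcal C^1$-neighbourhood is close to a straight line, $u_n$ is $\mathcal C^1$-close to the corresponding heteroclinic, with constants independent of $n$ — a stability estimate for the heteroclinic profile that I would obtain by a separate compactness argument (or extract from the analysis underlying the Refined Asymptotics Theorem of \cite{dkp-2009} and Theorem~4.4 of \cite{MR2381198}).
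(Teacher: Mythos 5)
Your proposal is, at bottom, the same mechanism as the paper's proof: argue by contradiction, translate to a point going to infinity along the nodal set, use the De Giorgi theorem (together with the uniform estimates of Lemmas~\ref{le:2.2},~\ref{twist},~\ref{n3}) to extract a heteroclinic limit, and then invoke the balancing formul{\ae} with $\partial_x$, $\partial_y$ and $x\partial_y-y\partial_x$ to pin down the angle and the shift of that limit. The organization is genuinely different, though. The paper translates by $\bar{\tt x}_j$, the orthogonal projection of the bad point onto $\lambda_{n_j}$, passes to the limit heteroclinic $\bar u$, and identifies its parameters directly with $(\theta_*,r_*)$ via the balancing formula over a triangle whose slanted side moves with $\bar{\tt x}_j$. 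You instead isolate the quantitative statement~$(\star)$ that $\mathcal Z_n\cap Q^{\llcorner}$ approaches $\lambda_n$ uniformly in $n$, prove it separately by the finite-$R$ flux identity over $\Sigma_R$, and then finish by translating to a nearest \emph{nodal} point. Factoring out $(\star)$ is a cleaner and arguably more transparent route; it makes explicit what the paper leaves implicit, namely that the whole argument turns on a uniform-in-$n$ $\mathcal C^1$-closeness of $u_n$ to a heteroclinic profile near far-out points of $\mathcal Z_n$, which you correctly flag as the technical crux and which both proofs obtain by the same compactness-plus-De-Giorgi mechanism.

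One point deserves care in the flux computation for $(\star)$, and it is a subtlety shared with the paper's own version of this step. The rotational Noether current involves the linearly growing field $X=x\partial_y-y\partial_x$, and the identity over $\Sigma_R$ identifies the shift $\tilde r_n(R)$ of the \emph{local} approximating nodal line $\ell_n(R)$ with $r_n+o(1)$, while the translation fluxes give only $\tilde\theta_n(R)=\theta_n+o(1)$ with no rate. Since $\ell_n(R)$ passes through the nodal point at distance $\sim R$ from the origin, converting $|\tilde r_n(R)-r_n|\to 0$ into $\mathrm{dist}(\mathtt{z}_n(R),\lambda_n)\to 0$ picks up a term of order $R\,|\tilde\theta_n(R)-\theta_n|$, which is not obviously $o(1)$. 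In the paper this is hidden in the brief assertion that the rotational balancing formula gives $\bar r=r_*$; in your presentation it shows up as the need to control the error $\varepsilon_n(R)$ including the contribution from the small tilt of $\ell_n(R)$ relative to $\lambda_n$. The way out, in both arguments, is to apply the balancing identity with the rotation \emph{recentered} at the nodal point (equivalently, to subtract the constant field $X(\mathtt{z}_n(R))$ and use the translation identities on it), so that only bounded Killing data is integrated against the local profile; your outline would benefit from making this cancellation explicit, since otherwise the ``same computation as for the model heteroclinic'' silently absorbs a potentially $O(1)$ term.
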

\begin{proof}
The proof is by contradiction. If the result were not true, there would exists $\epsilon >0$, a sequence $(R_j)_{j\geq 0}$ tending to infinity, a sequence $({\tt x}_j)_{j\geq 0}$ such that $|{\tt x}_j|\geq R_j$ and a sequence $(n_j)_{j\geq 0}$ such that 
\begin{equation}
|v_{n_j} ({\tt x}_j)| \geq \epsilon.
\label{eq:zs}
\end{equation}
Up to a subsequence we can assume that $(\theta_{n_j}, r_{n_j})_{j\geq 0}$ converges to $(\theta_*, r_*)$. 

\medskip

Observe that the distance from ${\tt x}_j$ to $\lambda_{n_j}$ is necessarily bounded since, according to Lemma~\ref{le:2.2}, $v_{n_j}$ tends to $0$ away from $\lambda_{n_j}$. Let $\bar {\tt x}_j$ be the orthogonal projection of  ${\tt x}_j$ onto $\lambda_{n_j}$. 

\medskip

Making use of elliptic estimates and Aslcoli-Arzela's Theorem, we can assume, up to a subsequence, that $(u_{n_j} (\cdot - \bar {\tt x}_j))_{j\geq 0}$ converges uniformly on compacts to a solution of (\ref{AC}) which is non trivial and which, thanks to the de Giorgi conjecture, is an heteroclinic solution $\bar u$ of (\ref{AC}). The end of this heteroclinic solution is the affine line of angle $\bar \theta$.  As in the proof of Lemma~\ref{n3}, we use the vector field $X= \partial_x$ in the  balancing formula to conclude that $\theta_* = \bar \theta$. 

\medskip

Therefore, the parameters of the end of $\bar u$ are given by $(\theta_*, \bar r)$.  As in the proof of Lemma~\ref{n3}, we  use the vector field $X = x\partial_y - y \partial_x$ in the balancing formula to conclude that $r_* = \bar r$. This is clearly a contradiction with (\ref{eq:zs}). 
\end{proof}

\medskip

Thanks to the Refined Asymptotics Theorem (Theorem 2.1 in \cite{dkp-2009}) we can decompose
\[
u_n = v_n + u_{\lambda_n},
\]
and 
\[
u_* = v_* + u_{\lambda_*},
\]
where $\lambda_n, \lambda_* \in \Lambda^4_{ord}$ and where $v_n, v_* \in e^{-\delta (1+|{\tt x}|^2)} \, W^{2,2} (\mathbb R^2)$ for some $\delta >0$. Observe that, {\it a priori}, the parameter $\delta$ can vary with $n$ but close inspection of the proof of Theorem 2.1 in \cite{dkp-2009})  shows that $\delta >0$ can indeed be chosen independently of $n \geq 0$ since the ends $\lambda_n$ converge to a fixed end $\lambda_*$.

\medskip

This, together with the fact that $(u_n)_{n\geq 0}$ converges uniformly on compacts to $u_*$ implies that $(u_n)_{n\geq 0}$ converges to $u_*$ in the topology of $\mathcal{S}_{4}$.  This completes the proof of the properness of the classifying map $\mathcal P$. 

\medskip

Let $M$ be the connected component of $\mathcal M^{even}_4$ which contains the saddle solution. We claim that the properness of $\mathcal P$ implies that the image by $\mathcal P$ of $M$ is the entire interval $(-\pi/4,\pi/4)$. The proof of this claim goes as follows~: we argue by contradiction and assume that $\mathcal P :  M \to (-\pi/4, \pi/4)$ is not onto. Recall that if $u \in \mathcal M^{even}_4$, then $\bar u$ defined by
\[
\bar u (x,y) : = -u (y,x),
\]
also belongs to $\mathcal M_4^{even}$ and $M$ is also invariant under this transformation. We will write $\bar u = J \, u$. The properness of $\mathcal P$ implies that $M$ is compact and one dimensional. Hence, it must be diffeomorphic to $S^{1}$.  Obviously $J : M \to M$ is a diffeomorphism and the saddle solution is a fixed point of $J$. Since $M$ is diffeomorphic to $S^{1}$, there must be at least another fixed element $v \in M$ which is a fixed point of $J$. Then, the zero set of $v$ is union of the two lines $y = \pm x$.  But, according to \cite{MR1198672} or \cite{MR2381198}, a solution of (\ref{AC}) having as zero set the two lines $y=\pm x$ is the saddle solution. This is a contradiction and the proof of the claim is complete. Note that this argument does not guarantee that there are no other compact connected components in $\mathcal M_4^{even}$. To prove this fact, we will need one more result which will be described in the next section. In any case, instead of using the argument outlined above to show that $\mathcal P$ is onto, one can use the next section of the paper.

\medskip
\section{Any connected component of $\mathcal M_4^{even}$ is not compact}
\label{rigidity}

We have shown in section \ref{sec no degeneracy} that elements in $\mathcal M^{even}_4$ are even-nondegenerate. According to the moduli space theory for solutions of (\ref{AC}) (see Section 8 and Theorem 2.2 in \cite{dkp-2009}), any connected component of $\mathcal M_4^{even}$ is a one dimensional manifold and its image by $\mathcal F$ is a smooth (possibly immersed) curve in $(-\pi/4, \pi/4) \times \mathbb R$.  In particular, any compact connected component $M \subset {\mathcal{M}}_{4}^{even}$ would have to be diffeomorphic to $S^{1}$. In this section, we show that this cannot happen. 
\begin{theorem}
All connected components of  $\mathcal M_4^{even}$ are not compact, namely, there is no closed loop in $\mathcal M_4^{even}$.
\label{th:6.1}
\end{theorem}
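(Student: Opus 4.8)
The plan is to argue by contradiction: suppose $M \subset \mathcal M_4^{even}$ is a compact connected component, so that $M$ is diffeomorphic to $S^1$. I would exploit the discrete symmetry $J : M \to M$ given by $(J u)(x,y) := -u(y,x)$, which is an involution: $J^2 = \mathrm{id}$. Since $M \cong S^1$ and $J$ is an orientation-reversing or orientation-preserving diffeomorphism, one analyzes its fixed points. If $J$ preserves orientation, then either $J$ has no fixed point (a free involution of $S^1$ is conjugate to the antipodal map) or $J = \mathrm{id}$; if $J$ reverses orientation, it has exactly two fixed points. In either of the nontrivial cases one produces an element $v \in M$ with $J v = v$, i.e.\ a solution with $-v(y,x) = v(x,y)$, which combined with the evenness in both axes forces the nodal set of $v$ to be exactly the union of the lines $y = \pm x$; by the uniqueness result of \cite{MR1198672}, \cite{MR2381198}, $v$ must be (up to sign) the saddle solution. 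Thus $M$ contains the saddle solution and at least one more $J$-fixed point equal to the saddle solution, which is already a tension one must resolve more carefully.

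**Sharpening via the parameter $r$ and properness.** The cleaner route, which I expect to be the actual argument, combines the classifying-map picture with the balancing formulae. Recall that $\mathcal F = (\mathcal P, r) : \mathcal M_4^{even} \to (-\pi/4,\pi/4)\times \mathbb R$ and that, by Theorem on properness of $\mathcal P$ together with the nondegeneracy result, $\mathcal F$ restricted to any connected component $M$ is a smooth immersed curve whose image is invariant under $(\theta - \pi/4, r)\mapsto -(\theta-\pi/4,r)$ (Remark~\ref{re:1}). On a compact component $M \cong S^1$, the image $\mathcal F(M)$ is a compact immersed closed curve symmetric about the origin. The key extra ingredient is a monotonicity/sign relation tying $r$ to the geometry: I would show that along any such loop the quantity $r$ cannot return to its starting value without the solution degenerating, using that $\mathcal P$ is proper and that near the ``ends'' of the interval $(-\pi/4,\pi/4)$ (angle close to $0$ or $\pi/2$) one has $r \to \mp\infty$ for the solutions of \cite{MR2557944} — but since $M$ is compact, $\mathcal P(M)$ is a compact subinterval, so this escape is unavailable and one must instead derive a contradiction internally.

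**The main obstacle and how I would handle it.** The hard part is ruling out that $M$ is a ``small'' loop that never touches the saddle solution and stays in a compact region of $(-\pi/4,\pi/4)\times\mathbb R$ — the symmetry argument alone does not immediately kill this. My plan is to use a degree/index argument: since every $u\in\mathcal M_4^{even}$ is even-nondegenerate, the one-dimensional manifold $M$ is cut out transversally, and one can count (with sign) the intersections of $\mathcal F(M)$ with the axis $\{r = 0\}$ or with the fixed-point locus of the involution on the target. Each transversal intersection of $M$ with $\mathrm{Fix}(J)$ corresponds to a solution whose nodal set is $\{y = \pm x\}$, hence to the saddle solution by uniqueness; but on a loop $S^1$ an involution with a fixed point has an \emph{even} number of fixed points (counted appropriately) unless it is trivial, and all of them coincide with the unique saddle solution, forcing at least two distinct points of $M$ to equal the same solution — impossible on an embedded piece, or else $M$ is a single point, contradicting $M\cong S^1$. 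Assembling this carefully, together with the properness already established, yields the contradiction and proves that no connected component of $\mathcal M_4^{even}$ is compact. The delicate technical point will be making the transversality and the ``fixed point $\Rightarrow$ saddle solution'' steps precise in the quotient (equivariant) setting, and ensuring the integration-by-parts subtleties in the balancing formulae (as in the proof of Theorem~\ref{th:ng}) are handled when nodal sets touch the axes.
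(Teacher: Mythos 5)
Your proposal is built around the involution $J : u \mapsto -u(y,x)$ and a fixed-point count on $M \cong S^1$, but this route has a genuine gap that the paper explicitly flags at the end of Section~5 ("Note that this argument does not guarantee that there are no other compact connected components"). Two things go wrong. First, $J$ permutes the connected components of $\mathcal M_4^{even}$, so nothing forces $J(M) = M$; if $J$ sends $M$ to a \emph{different} component, your argument does not even start. Second, even when $J(M) = M$, an involution of $S^1$ can be \emph{free}: for instance, if $J|_M$ is orientation-preserving it can be conjugate to rotation by $\pi$, in which case it has no fixed point and you never produce the desired saddle-like element $v$. Your later degree/index refinement still counts intersections with $\mathrm{Fix}(J)$, so it is subject to the same objection — a small $J$-free loop (or a loop not preserved by $J$) is invisible to the count. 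You correctly identify "a small loop that never touches the saddle solution" as the hard case, but the machinery you propose does not actually rule it out.

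The paper's argument is different in kind and does not use the involution at all. It chooses a smooth regular parameterization $\sigma \mapsto u(\cdot,\sigma)$ of the hypothetical loop $M$, differentiates the equation to get $L\,\partial_\sigma u = 0$, and observes that $\int_0^{2\pi} \partial_\sigma u(\mathtt{x},\sigma)\,d\sigma = 0$ because the loop closes up; evaluating at $\mathtt{x} = (0,0)$ produces a parameter $\sigma_*$ where the Jacobi field $\phi := \partial_\sigma u(\cdot,\sigma_*)$ vanishes at the origin. Since $\phi$ is even in both variables and $\phi(0,0) = 0$, one of its nodal domains lies in a single half-plane, and (after checking boundedness of $\phi$ on that nodal domain using the decomposition $T_u\mathcal S_4 = W^{2,2} \oplus \mathfrak D$) the Ghoussoub--Gui/De Giorgi integration-by-parts argument from Step~2 of Theorem~\ref{th:ng}, with $\psi = \partial_x u$ or $\partial_y u$ as the positive solution in that half-plane, forces $\phi \equiv 0$ there, hence everywhere by unique continuation, contradicting $\partial_\sigma u \neq 0$. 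If you want to salvage a proof along your original lines, you would at minimum need an independent argument that every connected component is $J$-invariant and that $J|_M$ cannot be free — neither of which is available here — so the Jacobi-field nodal argument is not merely a convenience but appears necessary.
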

\begin{proof}
We argue by contradiction and assume that $\mathcal M^{even}_4$ contains a connected component $M$ which is diffeomorphic to $S^1$.  We choose a smooth regular parameterization of $M$ by
\[
\sigma \in S^1 \mapsto u(\cdot,\sigma) \in M,
\] 
so that
\[
\Delta u(\cdot, \sigma)-F^{\prime}\left(  u(\cdot, \sigma)\right)  = 0,
\]
for all $\sigma \in S^1$ and $\partial_\sigma u \neq 0$ for all $\sigma \in S^1$. Differentiation with respect to $\sigma$ implies that $\partial_{\sigma}u \in T_u \mathcal S_4$ satisfies
\[
\left( \Delta -F^{\prime\prime}\left(  u\right)\right)  \, \partial_{\sigma}u = 0.
\]

Observe that, for all ${\tt x} \in \mathbb R^2$,
\[
0 = u\left(  {\tt x}, 2\pi \right) - u \left(  {\tt x} , 0\right)  = \int_{0}^{2\pi} \partial_{\sigma} u \left( {\tt x} ,\sigma\right)  \, d\sigma.
\]
Choosing ${\tt x}$ to be the origin, this implies that there exists $\sigma_{*} \in S^1$, such that $\partial_{\sigma} u((0,0),\sigma_{*}) =0$. We define 
\[
\phi : = \partial_{\sigma} u(\cdot ,\sigma_{*}) .
\]
Observe that $\phi\neq 0$ and that $\phi$ is even with respect to the symmetry about both the $x$-axis and the $y$-axis.

\medskip

By definition, any element $u$ of $\mathcal S_4$ can be decomposed into the sum of a function in $W^{2,2} (\mathbb R^2)$ and an element of the form $u_\lambda$ as defined in (\ref{def w}). Moreover, because of the symmetries, $u_\lambda$ only depends on the two parameters $r$ and $\theta$ which characterize $\lambda$.   In particular,  the tangent space of $\mathcal S_4$ at $u$ can be decomposed as
\[
T_u\mathcal S_4 =  W^{2,2}\left( \mathbb{R}^{2}\right) \oplus{\mathfrak{D}},
\] 
where
\[
\mathfrak D  := \mbox{Span}\, \{\partial_r u_\lambda , \partial_\theta u_\lambda \}.
\]
It is easy to check that $\partial_\theta u_\lambda$ is linearly growing along the zero set of $u$ while $\partial_r u_\lambda $ is bounded.

\medskip

Since $\phi (0,0) =0$ and since $\phi$ is symmetric with respect to the $x$-axis and the $y$-axis, there exists $\Omega$, a nodal domain of $\phi$,  which is included in one of the four half spaces $\{(x,y)\in \mathbb R^2 \, : \, \pm x >0\}$ or $\{(x,y)\in \mathbb R^2 \, : \, \pm y >0\}$. We claim that this nodal domain can be chosen so that $\phi$ is bounded on it.  Indeed, if $\phi \in W^{2,2}\left( \mathbb{R}^{2}\right) \oplus \mbox{Span}\, \{\partial_r u_\lambda \}$, then $\phi$ is bounded and one can select any nodal domain contained in a half space. 

\medskip

The other case to consider is the case where $\phi = a \, \partial_\theta u_\lambda + \tilde \phi$ where $\tilde \phi$ is bounded. Inspection of $\partial_\theta u_\lambda $ near the end of $u$ shows that, away from a large ball $B(0,R)$, the function $\phi$ does not vanish along the zero set of $u$. In this case, it is enough to select a nodal domain of $\phi$ which is unbounded and which, away from $B(0,R)$,  does not contain the zero set of $u$. It is easy to check that $\phi$ is bounded in such a nodal domain. 

\medskip

For example, let us assume that the nodal domain $\Omega \subset \{(x,y) \in \mathbb R^2 \, : \, x >0\}$. Then, one can repeat the argument of Step 2 in the proof of Theorem~\ref{th:ng}, with $\psi = \partial_x u$, to prove that 
\begin{equation}
\int_\Omega |\nabla h|^2 \psi^2 \zeta_R^2 d{\tt x} \leq 2 \left( \int_{\Omega \cap A_R} |\nabla h|^2 \psi^2  \zeta^2_R d{\tt x}\right)^{1/2} \left( \int_{\Omega \cap A_R} \phi^2 |\nabla \zeta_R|^2 d{\tt x}\right)^{1/2} ,
\label{eq:ss}
\end{equation}
where $h : = \frac{ \phi}{\psi}$. Using the fact that $\phi$ is bounded, and letting $R$ tend to infinity, we conclude that 
\[
\int_\Omega |\nabla h|^2 \psi^2 \, d{\tt x} <+\infty.
\]
Using this information back into (\ref{eq:ss}), and letting $R$ tend to infinity, we conclude that 
\[
\int_\Omega |\nabla h|^2 \psi^2  d{\tt x} =0,
\]
and this implies that $\phi \equiv 0$ in $\Omega$. The unique continuation theorem then implies that $\phi \equiv 0$, which is a contradiction. 
\end{proof}

\medskip

We observe that from the above considerations, we can give a different proof of Theorem~\ref{th:4}. Indeed, we choose $M$ to be the connected component of ${\mathcal{M}}_{4}^{even}$ which contains the saddle solution. Using the Implicit Function Theorem (Theorem 2.2 in \cite{dkp-2009}), which applies since we have proven that any element of $\mathcal M_4^{even}$ is non even-degenerate, we conclude that $M$ is a smooth, one dimensional manifold. By Theorem~\ref{th:6.1},  $M$ is necessarily non compact and the image of $M$ by $\mathcal P$ cannot be compact either. Hence the image of $M$ by $\mathcal P$ contains either an interval of the form $(-\pi/4, \delta)$ or $(\delta, {\pi}/{4})$. Since the image by $\mathcal P$ of the saddle solution is $0$, we conclude that $(-\pi/4, \delta)$ or $(\delta, {\pi}/{4})$ contains $0$. Moreover, the image of $M$ by $\mathcal P$ is symmetric with respect to $0$ and hence it has to be the whole interval $(-\pi/4, \pi/4)$.

\section{The Morse index of $4$-ended solutions}

In this section, we give a proof of Theorem~\ref{th:2.8}. The proof follows from the result of Proposition~\ref{pr:3.1} together with a result of \cite{Bap-Dev}. For the sake of completeness, we give here a straightforward proof which is inspired from \cite{Don} and which is independent from the proof of \cite{Bap-Dev}. 

\medskip

Assume that $u$ is a $2k$-ended solution. We consider the linearized operator about $u$
\[
L : =  - \Delta + F''(u) .
\]
From Proposition~\ref{pr:3.1}, we know that there exists $R_0 >0$ and a positive function $\Psi >0$ defined in $\mathbb R^2$ such that 
\[
L \Psi \leq 0 , 
\]
in $\mathbb R^2- B_{R_0}$. 

\medskip

Let $\phi$ be an eigenfunction of $L$ in $B_R$ (with $0$ Dirichlet boundary conditions), which is associated to a negative eigenvalue, namely
\[
L \phi = \lambda \, \phi,
\]
in $B_R$ with $\psi=0$ on $\partial B_R$ and $\lambda <0$. For all $R >R_0$, one can use the function $\Psi$ as a barrier to show, using the maximum principle, that there exists a constant $C >0$ independent of $R\geq 2 R_0$ such that
\begin{equation}
\| \phi \|_{L^\infty (B_R - B_{R_0})} \leq C \,  \| \phi \|_{L^\infty (\partial B_{R_0})}Ê.
\label{eq:1}
\end{equation}

Now, elliptic estimates also imply that there exists a constant $C >0$, which does not depend on $R > 2R_0$ such that
\begin{equation}
\| \phi \|_{L^\infty (B_{R_0})} \leq C \,  \| \phi \|_{L^2 (B_{2R_0})}Ê.
\label{eq:2}
\end{equation}
Observe that, in order to obtain this inequality, we have implicitly used the fact that the negative eigenvalues of $L$ are bounded. Using both (\ref{eq:1}) and (\ref{eq:2}), we conclude that there exists a constant $C >0$ which does not depend on $R> 2R_0$ such that, for all eigenfunctions of $L$ in $B_R$ which are associated to negative eigenvalues, we have
\begin{equation}
\| \phi \|_{L^\infty (B_{2R_0})} \leq C \,  \| \phi \|_{L^2 (B_{2R_0})}Ê.
\label{eq:est}
\end{equation}

The proof now follows the strategy of \cite{Don} to estimate the multiplicity of the eigenvalues of the Laplace-Beltrami operator starting from H\"ormander's estimate. Let $\phi_1, \ldots, \phi_m$ be an orthonormal basis (in $L^2(B_{2R_0})$) of the vector space spanned by restrictions of the eigenfunctions of $L$ associated to negative eigenvalues. We define the Bergman kernel associated to the orthogonal projection in $L^2(B_{2R_0})$ onto $V$. Namely,  
\[
K({\tt x}, {\tt y}) : = \sum_{j=1}^m \phi_j({\tt x}) \phi_j({\tt y}).
\]
Observe that $K$ is independent of the choice of the orthonormal basis. Also 
\[
m = \int_{B_{2R_0}}ÊK({\tt x} ,{\tt x})  \, d{\tt x},
\]
is the dimension of $V$. Obviously, there exists ${\tt x}_0 \in \bar B_{2R_0}$ such that
\[
K({\tt x}_0, {\tt x}_0) \mbox{Vol} ( B_{2R_0} )  \geq m .
\]
We then consider the evaluation form
\[
\mathcal E_{{\tt x}_0}  (\phi) : = \phi({\tt x}_0) ,
\]
and choose the orthonormal basis $\phi_1, \ldots, \phi_m $ such that $\mathcal E_{{\tt x}_0} (\phi_j) =0$ for $j=2, \ldots, m$. Then 
\[
K({\tt x}_0, x_0) \,  \mbox{Vol} ( B_{2R_0}) = \phi_1({\tt x}_0)^2 \,  \mbox{Vol} ( B_{2R_0} ) \geq m 
\]
But, (\ref{eq:est}) implies that 
\[
\|\phi_1 \|_{L^\infty (B_{2R_0})} \leq  C  \,    \| \phi_1 \|_{L^2(B_{2R_0})}   = C .
\]
Therefore  $m \leq C^2 \, \mbox{Vol} ( B_{2R_0} )$ and hence the dimension of $V$ is bounded independently of $R> 2 R_0$. 

\medskip

This implies that the Morse index of $L$ is finite.


\begin{thebibliography}{999999}

\bibitem{MR1775735} L. Ambrosio and X. Cabr\'e. {\em Entire solutions of semilinear elliptic equations in $R^3$ and a conjecture of De Giorgi}. J. Amer. Math. Soc., 13 (4) : 725-739, (2000).

\bibitem{MR1470317} H. Berestycki, L.A. Caffarelli and L. Nirenberg. {\em Monotonicity for elliptic equations in unbounded Lipschitz domains}. Comm. Pure Appl. Math. 50 (1997), no. 11, 1089Ð1111.

\bibitem{Cab} X. Cabr\'e. {\em Uniqueness and stability of saddle-shaped solutions to the Allen-Cahn equation}. ArXiv e-prints, Feb. (2011).

\bibitem{MR1198672} H. Dang, P. C. Fife and L. A. Peletier. {\em Saddle solutions of the bistable diffusion equation}. Z. Angew. Math. Phys., 43 (6) : 984-998, (1992).

\bibitem{dkp-2009} M. del Pino, M. Kowalczyk and F. Pacard. {\em Moduli space theory for the Allen-Cahn equation in the plane}. to appear Transactions AMS, (2010).

\bibitem{MR2557944} M. del Pino, M. Kowalczyk, F. Pacard, and J. Wei. {\em Multiple-end solutions to the Allen-Cahn equation in $R^2$}. J. Funct. Anal., 258 (2) : 458-503, (2010).

\bibitem{dkp_dg}  M. del Pino, M. Kowalczyk and J. Wei. {\em On De Giorgi's in dimension $N\geq 9$}. to appear in Ann. of Math. (2), (2008).

\bibitem{Bap-Dev} B. Devyver. {\em On the finiteness of the Morse index for Schr\"odinger operators}. Preprint (2011). 	arXiv:1011.3390v2  

\bibitem{Don} H. Donelly. {\em Eigenfunctions of the Laplacian on compact Riemannian manifolds}, Asian J. Math. Vol. 10, No. 1, pp. 115126, (2006).

\bibitem{MR808112} D. Fischer-Colbrie. {\em On complete minimal surfaces with finite Morse index in three-manifolds}. Invent. Math., 82 (1) : 121-132, (1985).

\bibitem{MR1637919}  N. Ghoussoub and C. Gui. {\em On a conjecture of De Giorgi and some related problems}. Math. Ann., 311 (3) : 481-491, (1998).

\bibitem{MR2381198} C. Gui. {\em Hamiltonian identities for elliptic partial differential equations}. J. Funct. Anal., 254 (4) : 904-933, (2008).

\bibitem{2011arXiv1102.4022G} C. Gui. {\em Even symmetry of some entire solutions to the Allen-Cahn equation in two dimensions}. ArXiv e-prints, Feb. (2011).

\bibitem{MR958255} H. Karcher. {\em Embedded minimal surfaces derived from Scherk's examples}. Manuscripta Math., 62 (1) : 83-114, (1988).

\bibitem{kow-liu}  M. Kowalczyk and Y. Liu. {\em Nondegeneracy of the saddle solution of the Allen-Cahn equation on the plane}. to appear in  Proceedings AMS, (2010).

\bibitem{partII} M. Kowalczyk, Y. Liu and F. Pacard. {\em The classification of four ended solutions to the Allen-Cahn equation on the plane}. in preparation, (2011).

\bibitem{Per-Tra} J. Perez and M. Traizet. {\em The classification of singly periodic minimal surfaces with genus zero and Scherk type ends}. Trans. Amer. Math. Soc. 359, 965-990, (2007).

\bibitem{MR2480601} O. Savin. {\em Regularity of at level sets in phase transitions}. Ann. of Math. (2), 169 (1) : 41-78, (2009).

\bibitem{MR1363002} M. Schatzman. {\em On the stability of the saddle solution of Allen-Cahn's equation}. Proc. Roy. Soc. Edinburgh Sect. A, 125 (6) : 1241-1275, (1995).

\end{thebibliography}
\end{document}